\documentclass[11pt]{article}

\usepackage[utf8]{inputenc}
\usepackage[english]{babel}
\usepackage{enumerate}
\usepackage{latexsym}
\usepackage{amsthm,amsmath}
\usepackage{amssymb}
\usepackage{aliascnt} %para que funcione bien \Cref
\usepackage{multicol}
\usepackage{tikz}
\usepackage{float} % for [H] option in figures

\usepackage[bookmarks]{hyperref} 
\hypersetup{
	pdftitle={2-switch-degree Classification of Split Graphs},
	pdfauthor={V.N. Schvollner},
	colorlinks=true,
	linkcolor=blue,
	citecolor=red,
	filecolor=cyan,
	urlcolor=magenta
}

\usetikzlibrary{babel,decorations.pathreplacing,calc,positioning,shapes.geometric,arrows.meta}

\usepackage{cleveref}

\definecolor{10}{RGB}{115,59,171}
\definecolor{8}{RGB}{212,122,240}
\definecolor{7}{RGB}{99,212,119}
\definecolor{6}{RGB}{183,240,164}
\definecolor{D}{RGB}{255,162,79}
\definecolor{E}{RGB}{255,84,0}
\definecolor{F}{RGB}{158,248,255}
\definecolor{G}{RGB}{128,135,255}
\definecolor{I}{RGB}{187,255,0}
\definecolor{A}{cmyk}{.9,.05,.4,0}
\definecolor{B}{RGB}{150,30,150}
\definecolor{C}{RGB}{186,155,189}
\definecolor{9}{RGB}{0,180,60}
\definecolor{0}{RGB}{30,123,191}
\definecolor{1}{RGB}{255,113,102}
\definecolor{2}{RGB}{41,199,92}
\definecolor{3}{RGB}{242,207,16}
\definecolor{5}{RGB}{255,15,154}
\definecolor{4}{rgb}{.8,0,.8}

\definecolor{Red}{rgb}{1,0.4,0.4}
\definecolor{Green}{rgb}{.1,.5,.1}
\definecolor{Blue}{rgb}{.1,.1,.5}
\definecolor{blue}{RGB}{0,0,255}
\definecolor{Yellow}{rgb}{.8,.4,0}
\definecolor{X}{rgb}{.8,.4,0}
\definecolor{H}{rgb}{0,0,1}
\definecolor{light}{rgb}{.67,.84,.90}
\definecolor{Cyan}{rgb}{0,1,1}
\definecolor{Purple}{rgb}{.5,0,.5}
\definecolor{Purple2}{rgb}{.5,.2,.5}
\definecolor{white}{rgb}{1.0,1.0,1.0}
\definecolor{Purple2}{rgb}{.8,.4,0}
\definecolor{Amarillo}{RGB}{225,191,73}
\definecolor{Celeste}{RGB}{117,170,219}
\definecolor{Castano}{RGB}{232,53,17}
\definecolor{Black}{RGB}{0,0,0}
\definecolor{White}{RGB}{255,255,255}
\definecolor{gris}{rgb}{.5,.5,.5}

\newtheorem{theorem}{Theorem}[section]
\newtheorem{conjecture}[theorem]{Conjecture}

\newaliascnt{corollary}{theorem}
\newtheorem{corollary}[corollary]{Corollary}
\aliascntresetthe{corollary}
\newaliascnt{lemma}{theorem}
\newtheorem{lemma}[lemma]{Lemma}
\aliascntresetthe{lemma}
\newtheorem{problem}[theorem]{Problem}
\newaliascnt{proposition}{theorem}
\newtheorem{proposition}[proposition]{Proposition}
\aliascntresetthe{proposition}
\theoremstyle{definition}

\pgfdeclarelayer{background2}
\pgfdeclarelayer{background}
\pgfdeclarelayer{foreground}
\pgfsetlayers{background2,background,main,foreground}

\title{The $\Delta$ property: a bridge between split graphs\\ and Number Theory}
\author{Victor N. Schvöllner}
\date{}

\begin{document}
	
	\maketitle
	\begin{abstract}
			%A \emph{2-switch} is a local edge swap that preserves the degree sequence of a graph. 
			For a split graph $S$, the combinatorics of 2-switches on $S$ is faithfully encoded by the factor graph $\Phi(S)$, a multigraph whose induced cycles have length at most $4$. In this paper we address the following question: for which $n \in \mathbb{N}$ is there a split graph $S$ whose factor graph contains an $n$-simple triangle, that is, a triangle all of whose edges have multiplicity $n$? We show that the answer is governed by a purely arithmetic condition, the $\Delta$ property, relating the differences and sums of complementary divisors of $n$, and thereby establish a two-way bridge between Graph Theory and Number Theory. 
			%We develop the theory of the set $\mathbb{N}(\Delta)$ of integers satisfying this condition: we introduce its multiplicative atoms (the \emph{$\Delta$-primitives}), prove their infinitude via an elementary cubic polynomial generator, exhibit several explicit infinite families of obstructions, and characterize exactly when a squarefree product of three primes lies in $\mathbb{N}(\Delta)$.
%		We address a problem that lies at the intersection of Graph Theory and Number Theory: establishing conditions on $n\in\mathbb{N}$ for the existence of a split graph $S$ such that $\vec{\Phi}(S)$ is an $n$-simple triangle of type 0 (see  \Cref{triangulos.permitidos}). This problem may initially seem overly specific, but in fact, considering that the cycles in $\Phi$ can only be of size 3 or 4, it represents a significant step forward in understanding the 2-switch structure of split graphs. Moreover, the bridge it creates with Number Theory opens many doors to future research in this area.
	\end{abstract}
	
\textbf{Keywords:} Split graphs, factor graphs, 2-switches, degree sequences, induced triangles, differences of complementary divisors.

\textbf{MSC 2020:} 05C07, 05C75, 11A51.	

\section{Introduction}
\label{sec:intro}

\subsection{2-switches, realization graphs, and indecomposable split graphs}

Let $G$ be a graph. By $V(G)$ and $E(G)$ we denote, respectively, the set of vertices of $G$ and the set of edges of $G$. Given four distinct vertices $a,b,x,y$ of $G$ with $ab, xy \in E(G)$ and $ax, by \notin E(G)$, the process of replacing $ab, xy$ with $ax, by$ is said to be a \emph{2-switch} on $G$. This local operation preserves the degree sequence of $G$, and a classical theorem asserts that any two graphs with the same degree sequence are connected by a sequence of 2-switches (see \cite{chartrand2010graphs}). The \emph{realization graph} $\mathcal{G}(d)$ of a degree sequence $d$, whose vertices are the graphs with degree sequence $d$ and whose edges correspond to 2-switches, is therefore connected; understanding its geometry has been an active line of research for several decades (see, e.g.,~\cite{arikati1999realization,vnsigma.2switch.unic.pseudof}).

A particularly natural invariant extracted from $\mathcal{G}(d)$ is the \emph{2-switch-degree} $\deg(G)$ of a graph $G$, namely the degree of $G$ as a vertex of $\mathcal{G}(d)$. Equivalently, $\deg(G)$ counts the number of distinct 2-switches acting on $G$. This parameter, first studied systematically in~\cite{vnsigma.2switch.degree}, measures the local flexibility of $G$ within its realization class and is closely related to several structural properties.

A graph is said to be \textit{split} if its vertex set can be partitioned into a clique and an independent set (see \cite{cheng2016split}). Two results situate split graphs at the heart of the 2-switch-degree theory. On one hand, Tyshkevich proved in \cite{tyshkevich2000decomposition} that every graph $G$ admits a unique decomposition $G = G_r\circ \cdots \circ G_1$ into indecomposable factors, with the property that $G_2, \ldots, G_r$ are all split graphs. On the other hand, Barrus and West introduced in \cite{barrus.west.A4} the auxiliary graph $A_4(G)$, whose vertices are those of $G$ and whose edges join pairs of vertices participating together in some 2-switch of $G$, and showed that $G$ is indecomposable in the sense of Tyshkevich if and only if $A_4(G)$ is connected. Together with the fact that $\deg(S\circ G)=\deg(S)+\deg(G)$ (see \cite{vnsigma.2switch.degree}), these two theorems reduce the problem of classifying all the split graphs with fixed degree to the problem of classifying all indecomposable split graphs with fixed degree.

\subsection{The factor graph of a split graph}

Motivated by this reduction, in~\cite{vnsigma.factor.graph} the authors introduced the \emph{factor graph} $\Phi(S)$ of a split graph $S$. Let $(K,I)$ be a bipartition of $V(S)$ into a clique $K$ and an independent set $I$. So, $\Phi(S)$ is the loopless multigraph on $I$ in which the multiplicity $\sigma_{uv}$ of the edge $uv$ records the number of 2-switches acting simultaneously on $u$ and $v$. Throughout this paper, the symbols $d_v$ and $N_v$ always denote, respectively, the degree and the neighborhood of a vertex $v$ in the split graph $S$. An explicit formula relates $\sigma_{uv}$ to the neighborhoods of $u$ and $v$ in $S$:
\[
\sigma_{uv} \;=\; \bigl(d_u - \eta_{uv}\bigr)\bigl(d_v - \eta_{uv}\bigr),
\]
where $\eta_{uv} = |N_u \cap N_v|$. Summing multiplicities gives $|E(\Phi(S))| = \deg(S)$, so the factor graph packages the 2-switch-degree of $S$ together with its fine combinatorial structure.
%Compared with $A_4(S)$, the factor graph $\Phi(S)$ is cleaner, more compact, and free of the redundancies that $A_4(S)$ exhibits on split graphs. 
For this reason, $\Phi(S)$ has emerged as the natural object to study the 2-switch dynamics of indecomposable split graphs (see \cite{vnsigma.factor.graph, vnsigma.factor.graph.paths}).

The \textit{flow configuration} of a split graph $(S,K,I)$, denoted by $\vec{\Phi}(S)$, is defined as the digraph with $I$ as vertex set, where there is an arc $(u,v)$ from $u$ to $v$ if and only if $d_u\leq d_v$ in $S$ and $\sigma_{uv}>0$. Clearly, $\vec{\Phi}(S)$ and $\Phi(S)$ are isomorphic as simple graphs, ignoring multiple edges and arc directions. 
%Furthermore: $uv\in\vec{\Phi}(S)$ and $vu\notin\vec{\Phi}(S)$ if and only if $\sigma_{uv}(S)>0$ and $\deg_S(u)<\deg_S(v)$.
%Orienting each edge of $\Phi(S)$ from the endpoint of smaller degree (in $S$) to the endpoint of larger degree (in S) yields the \emph{oriented factor graph} $\vec\Phi(S)$; 
A key structural fact, established in~\cite{vnsigma.factor.graph.paths}, is that every induced cycle in $\Phi(S)$ has length at most $4$. It is easy to see that triangles of $\vec{\Phi}$ fall into one of the four orientation types of \Cref{triangulos.permitidos}. In this article we focus on the simplest and most symmetric case: triangles in which all three edges share a common multiplicity.

\begin{figure}[H]
	\centering
	\begin{tikzpicture}[
		scale=0.9,
		every node/.style={circle, draw, inner sep=2.4 pt},
		line width=0.5pt
		]
		
		% Primer triangulo: Delta_0
		\begin{scope}[shift={(0,0)}]
			\node (a) at (0,0) {};
			\node (c) at (2,0) {};
			\node (b) at (1,1.732) {};
			
			\draw[-{Latex[length=3mm,width=2mm]}] (a) -- (b);
			\draw[-{Latex[length=3mm,width=2mm]}] (b) -- (c);
			\draw[-{Latex[length=3mm,width=2mm]}] (a) -- (c);
			
			\node[draw=none] at (0,-0.5) {$a$};
			\node[draw=none] at (2,-0.5) {$c$};
			\node[draw=none] at (1,2.1) {$b$};
			
			\node[draw=none] at (1,-1.2) {$\Delta_0$};
		\end{scope}
		
		% Segundo triangulo: Delta_1^+
		\begin{scope}[shift={(3,0)}]
			\node (a) at (0,0) {};
			\node (c) at (2,0) {};
			\node (b) at (1,1.732) {};
			
			\draw[-{Latex[length=3mm,width=2mm]}] (a) -- (b);
			\draw[-{Latex[length=3mm,width=2mm]}] (c) -- (b);
			\draw (a) -- (c);
			
			\node[draw=none] at (0,-0.5) {$a$};
			\node[draw=none] at (2,-0.5) {$c$};
			\node[draw=none] at (1,2.1) {$b$};
			
			\node[draw=none] at (1,-1.2) {$\Delta_1^+$};
		\end{scope}
		
		% Tercer triangulo: Delta_1^-
		\begin{scope}[shift={(6,0)}]
			\node (a) at (0,0) {};
			\node (c) at (2,0) {};
			\node (b) at (1,1.732) {};
			
			\draw[-{Latex[length=3mm,width=2mm]}] (b) -- (a);
			\draw[-{Latex[length=3mm,width=2mm]}] (b) -- (c);
			\draw (a) -- (c);
			
			\node[draw=none] at (0,-0.5) {$a$};
			\node[draw=none] at (2,-0.5) {$c$};
			\node[draw=none] at (1,2.1) {$b$};
			
			\node[draw=none] at (1,-1.2) {$\Delta_1^-$};
		\end{scope}
		
		% Cuarto triangulo: Delta_3
		\begin{scope}[shift={(9,0)}]
			\node (a) at (0,0) {};
			\node (c) at (2,0) {};
			\node (b) at (1,1.732) {};
			
			\draw (a) -- (b);
			\draw (b) -- (c);
			\draw (a) -- (c);
			
			\node[draw=none] at (0,-0.5) {$a$};
			\node[draw=none] at (2,-0.5) {$c$};
			\node[draw=none] at (1,2.1) {$b$};
			
			\node[draw=none] at (1,-1.2) {$\Delta_3$};
		\end{scope}
		
	\end{tikzpicture}
	\caption{Permitted triangles in $\vec{\Phi}$. The sub-index denotes the number of edges of the digraph.}
	\label{triangulos.permitidos}
\end{figure}
%\begin{figure}[H]
%	\centering
%	\includegraphics[scale=0.8]{triangulos_permitidos.pdf}
%	\caption{Permitted triangles in $\vec{\Phi}$.}
%	\label{triangulos.permitidos}
%\end{figure}
 If $H$ is a subgraph of $\Phi$, we denote by $\vec{H}$ the corresponding subgraph in $\vec{\Phi}$. We say that $H$ is $n$-simple if all the edges of $H$ have multiplicity $n$. A triangle $T$ in $\Phi$ is said to be of \textit{type 0} if $\vec{T}=\Delta_0$ (up to labeling).
%A split graph $S$ is called  $n$\textit{-simple} if $\Phi(S)$ is $n$-simple. 
The question driving this paper can now be stated cleanly.

\begin{problem}\label{prob:main}
	For which natural numbers $n$ does there exist a split graph $S$ such that $\Phi(S)$ is an $n$-simple triangle of type 0?
\end{problem}

\subsection{The bridge to Number Theory: the $\Delta$ property}

At first glance, Problem~\ref{prob:main} may appear overly specific. Its interest lies in an unexpected phenomenon: the set of natural numbers for which the answer to Problem \ref{prob:main} is affirmative admits a clean, purely arithmetic description. To state it, for $n \in \mathbb{N}=\{1,2,3,\ldots\}$ let $D_n$ denote the set of positive divisors of $n$, and define
\[
D^*_n \;=\; \bigl\{\, |a - b| : a, b \in D_n,\ ab = n \,\bigr\}, \quad
D^+_n \;=\; \bigl\{\, x + y : x, y \in D^*_n - \{0\} \,\bigr\}.
\]
That is, $D^*_n$ collects the differences between complementary divisors of $n$, and $D^+_n$ the pairwise sums of the nonzero such differences. We say that $n$ has the \emph{$\Delta$ property} (or satisfies the $\Delta$ condition) when
\[
D^*_n \cap D^+_n \;\neq\; \varnothing,
\]
and we denote by $\mathbb{N}(\Delta)$ the set of all natural numbers with the $\Delta$ property. By inspection, the two smallest members of $\mathbb{N}(\Delta)$ are $24$ and $40$, because
\[
\tfrac{24}{2} - 2 \;=\; 10 \;=\; 2\left(\tfrac{24}{3} - 3\right), \qquad \tfrac{40}{4} - 4 \;=\; 6 \;=\; 2\left(\tfrac{40}{5} - 5\right).
\]
%witnessed respectively by the divisor triples $(2,3,3)$ and $(4,5,5)$; 
This article is divided into 5 sections. The first part of \Cref{sec:Graphs<->NumberTheory} is devoted to basic facts about the $\Delta$ condition, encoding them through the notion of a \emph{$\Delta$-triple} $(x,y,z)$ of divisors of $n$. We bound the components of any such triple, and determine the optimal upper bound on $n$ in terms of its smallest component $x$, separately in the regimes $y=z$ and $y<z$ (\Cref{prop:max-duplicated,prop:max-generic}). One of the main results of this work then links Problem \ref{prob:main} to the $\Delta$ condition in the second (last) part of \Cref{sec:Graphs<->NumberTheory}:

\begin{theorem}[\Cref{triang.n-simple&tipo0.implica.Delta.prop,n.con.propDelta.implica.existencia.de.T_0.n-simple}, \Cref{cor_finite_number_active_split}]
	\label{thmintro:A}
	Let $n \in \mathbb{N}$. Then, $n \in \mathbb{N}(\Delta)$ if and only if there exists an indecomposable split graph $S$ such that $\Phi(S)$ is $n$-simple triangle of type 0 (see \Cref{triangulos.permitidos}).
	%Moreover, whenever $n \in \mathbb{N}(\Delta)$, there are infinitely many pairwise non-isomorphic balanced split graphs $S$ realizing (a), only finitely many of which are active.
\end{theorem}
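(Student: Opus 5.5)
The plan is to prove both directions at once by translating the multiplicities of $\Phi(S)$ into sizes of regions of a Venn diagram. Let $I=\{a,b,c\}$. Since $\Phi(S)$ is a triangle, it has no other vertices. For a pair $u,v\in I$ the formula for $\sigma_{uv}$ gives
\[
\sigma_{uv}=|N_u\setminus N_v|\cdot|N_v\setminus N_u|,\qquad |N_v\setminus N_u|-|N_u\setminus N_v|=d_v-d_u .
\]
So $\sigma_{uv}=n$ says exactly that $|N_u\setminus N_v|$ and $|N_v\setminus N_u|$ are complementary divisors of $n$, and that $|d_v-d_u|\in D^*_n$. The type-0 condition $\vec T=\Delta_0$ means that no edge is bidirected, so the degrees are pairwise distinct. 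After relabeling we may assume $d_a<d_b<d_c$.

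\emph{Necessity} (\Cref{triang.n-simple&tipo0.implica.Delta.prop}). Set $y=d_b-d_a$, $z=d_c-d_b$ and $x=d_c-d_a$. By the observation above, $x,y,z\in D^*_n$, and $y,z\neq 0$. Since $x=y+z$, we get $x\in D^*_n\cap D^+_n$, so $n\in\mathbb{N}(\Delta)$. Indecomposability is not needed for this direction.

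\emph{Sufficiency} (\Cref{n.con.propDelta.implica.existencia.de.T_0.n-simple}). Suppose $x=y+z$ with $x,y,z\in D^*_n$ and $y,z>0$. Then $x>0$ as well. Choose factorizations
\[
n=p_1q_1,\quad q_1-p_1=y;\qquad n=p_2q_2,\quad q_2-p_2=z;\qquad n=p_3q_3,\quad q_3-p_3=x .
\]
We build $S$ with $I=\{a,b,c\}$ and clique $K$, where $K$ is partitioned into Venn regions $R_a,R_b,R_c,R_{ab},R_{ac},R_{bc}$. Here $R_W$ is the set of clique vertices adjacent exactly to the vertices of $W$. We take $R_{abc}=R_\emptyset=\varnothing$. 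Writing $r_W=|R_W|$, the requirements become
\[
\begin{array}{lll}
r_a+r_{ac}=p_1, & r_b+r_{bc}=q_1, & (\text{pair } ab),\\
r_b+r_{ab}=p_2, & r_c+r_{ac}=q_2, & (\text{pair } bc),\\
r_a+r_{ab}=p_3, & r_c+r_{bc}=q_3, & (\text{pair } ac).
\end{array}
\]
This is a $6\times 6$ linear system. Its coefficient matrix has rank $5$, and the alternating combination of the six equations gives the single compatibility condition $(q_1-p_1)+(q_2-p_2)-(q_3-p_3)=0$, which is exactly $y+z=x$.

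The main obstacle is to exhibit a solution in nonnegative integers; I expect this to be the delicate step. The solution set is a one-parameter family, and I would parametrize it by $t=r_{ab}$. The equations then give
\[
r_a=p_3-t,\quad r_b=p_2-t,\quad r_{ac}=p_1-p_3+t,\quad r_{bc}=q_1-p_2+t,\quad r_c=q_2-p_1+p_3-t .
\]
So we need $t$ with
\[
\max\{0,\;p_3-p_1,\;p_2-q_1\}\le t\le \min\{p_2,\;p_3,\;q_2-p_1+p_3\}.
\]
Verifying that this interval is nonempty, using $p_iq_i=n$ together with $x=y+z$, requires care. If the inequalities fail for the chosen factorizations, there is a fallback: allow $R_{abc}\neq\varnothing$ or $R_\emptyset\neq\varnothing$, which does not change any $\sigma$. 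Alternatively one can swap the roles of $p_i,q_i$, or pick different representatives $(p_i,q_i)$ of the same difference.

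Once such a solution exists, the degrees satisfy $d_b-d_a=y>0$ and $d_c-d_b=z>0$, so $\vec\Phi(S)=\Delta_0$ and every $\sigma$ equals $n$. For indecomposability, note that with $R_{abc}=R_\emptyset=\varnothing$ every clique vertex lies in some symmetric difference $N_u\triangle N_v$. Hence every clique vertex participates in a 2-switch together with $u$ and $v$. Also $a,b,c$ are pairwise joined in $A_4(S)$ because every $\sigma_{uv}>0$. Therefore $A_4(S)$ is connected, and by the Barrus--West theorem $S$ is indecomposable.
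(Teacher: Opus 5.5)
You have the right framework, and it is in substance the paper's. Your necessity argument coincides with the proof of \Cref{triang.n-simple&tipo0.implica.Delta.prop}. Your Venn-region system with free parameter $t=r_{ab}$ is the paper's construction with free parameter $d_a$, since $d_a=r_a+r_{ab}+r_{ac}=p_1+t$.

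\textbf{The gap.} You never show that the interval for $t$ contains an integer. That is exactly the existence claim the sufficiency direction rests on, so as written sufficiency is not proved. None of your fallbacks could rescue a failure:
\begin{itemize}
\item $R_{abc}$ and $R_\emptyset$ appear in none of the six equations, so enlarging them cannot repair a negative $r_W$. It would also create inactive clique vertices, which breaks your own $A_4$-connectivity argument.
\item Swapping $p_i$ and $q_i$ for one $i$ reverses the sign of one difference and destroys the compatibility condition $y+z=x$.
\item There are no other representatives to choose: a positive $D\in D^*_n$ determines the factorization $n=pq$ with $q-p=D$ uniquely.
\end{itemize}

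\textbf{The fix.} The missing step takes two lines.
\begin{itemize}
\item From $p_iq_i=n$ and $q_i>p_i$ you get $p_i<\sqrt{n}<q_i$ for $i=1,2,3$.
\item From $y,z>0$ and $x=y+z$ you get $y<x$ and $z<x$. Since $m\mapsto n/m-m$ is strictly decreasing, this gives $p_3<p_1$ and $p_3<p_2$.
\item Hence $p_3-p_1<0$ and $p_2-q_1<0$, so your lower bound is $0$. Also $\min\{p_2,p_3,q_2-p_1+p_3\}=p_3$, because $p_2>p_3$ and $q_2>p_1$.
\end{itemize}
So your interval is exactly $0\le t\le p_3$. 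Taking $t=0$ gives $r_a=p_3$, $r_b=p_2$, $r_{ab}=0$, $r_{ac}=p_1-p_3$, $r_{bc}=q_1-p_2$ and $r_c=q_2-p_1+p_3$, all nonnegative integers.

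This $t=0$ is precisely the paper's choice $\eta_{ab}=0$, i.e.\ $d_a=z$ in \Cref{n.con.propDelta.implica.existencia.de.T_0.n-simple}. Your upper bound $t\le p_3$ is the paper's activity bound $d_a\le x+z$, with $p_3,p_1$ playing the roles of its divisors $x,z$.

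\textbf{The rest.} With this inserted, the remainder of your argument is correct. Your indecomposability step takes a more direct route than the paper's. You exhibit a 2-switch through every vertex, conclude that $A_4(S)$ is connected, and apply Barrus--West. The paper instead first obtains balancedness from \Cref{phi.sin.aislados.implica.balanced}. It then gets activity from \Cref{Phi.completo.implica.inactivos.universales}, using $\eta_{abc}=0$. Finally it uses the criterion that an active split graph is indecomposable iff $\Phi(S)$ is connected.
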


The forward implication in \Cref{thmintro:A} solves an explicit realization problem starting from a number-theoretic datum, while the reverse implication extracts an arithmetic invariant from a purely combinatorial configuration. Together they provide a two-way bridge between Graph Theory and Number Theory. 

\subsection{The set $\mathbb{N}(\Delta)$ as a number-theoretic object}

\Cref{thmintro:A} motivates the systematic study of $\mathbb{N}(\Delta)$ as a number-theoretic object, which occupies the remainder of the article, i.e., from \Cref{sec:Delta.prim} and beyond. 

%\paragraph{Multiplicative structure.}
In \Cref{sec:Delta.prim} we observe that $\alpha^{2} n \in \mathbb{N}(\Delta)$ whenever $n \in \mathbb{N}(\Delta)$ and $\alpha \in \mathbb{N}$, which immediately yields $|\mathbb{N}(\Delta)| = \infty$ and motivates the notion of a \emph{$\Delta$-primitive} number: an element of $\mathbb{N}(\Delta)$ that is not a nontrivial square multiple of any smaller element of $\mathbb{N}(\Delta)$. $\Delta$-primitives play the role of multiplicative atoms for $\mathbb{N}(\Delta)$, in loose analogy with primes for $\mathbb{N}$: every member of $\mathbb{N}(\Delta)$ decomposes as the square of an integer times a $\Delta$-primitive, and every square-free element of $\mathbb{N}(\Delta)$ is itself $\Delta$-primitive. We further show that the subset $\mathbb{N}(\Delta^{2}) \subset \mathbb{N}(\Delta)$ of squares with the $\Delta$ property is closed under multiplication; it therefore forms an abelian semigroup under the ordinary product. We then study the uniqueness of the decomposition $n = \alpha^{2} m$ into a square and a $\Delta$-primitive: we prove that it is unique whenever two such decompositions have coprime $\Delta$-primitive parts (\Cref{prop:coprime-uniqueness}), but that uniqueness fails in general. This leads to a refined uniqueness conjecture, phrased in terms of the $\Delta$-primitives sharing a given square-free part. Finally, we close the section by constructing a cubic polynomial generator of numbers with the $\Delta$ property, through which we prove that there are infinitely many $\Delta$-primitives (\Cref{Delta-primitivos.son.infinitos}).

In \Cref{sec:polinom.generadores} we develop a systematic way to produce elements of $\mathbb{N}(\Delta)$ through generating polynomials. The starting point is that
\[
x(2x-1)(3x-2) \in \mathbb{N}(\Delta) \qquad \text{for every integer } x \geq 2,
\]
which is used to prove the infinitude of $\Delta$-primitive numbers in \Cref{subsec:delta_prim_are_infinite}.
%which encodes a solution of the $\Delta$ condition through the divisor triple $(x, 2x-1, 2x-1)$. 
We then generalize the construction and exhibit an infinite family of cubic polynomials $n(x)$ whose values lie in $\mathbb{N}(\Delta)$ for all sufficiently large integers $x$ (\Cref{polinom.Delta-generadores_deg=3}). Two complementary results delimit the reach of this method: it produces no generating polynomial of degree $\geq 4$ (\Cref{thm:failure-kgeq4}) and no linear family realizing the regime $y < z$ in degree $3$ (\Cref{thm:failure-generic-k3}).

 Whether $\mathbb{N}(\Delta^{2})$ contains infinitely many $\Delta$-primitive squares remains an open problem (Conjecture \ref{conj:inf_many_delta-squares}). However, we note a suggestive connection: for each cubic generating polynomial $f$ produced by our method, locating squares in $f(\mathbb{N}) \cap \mathbb{N}(\Delta)$ amounts to finding integer points on the elliptic curve $y^{2} = f(x)$. By Siegel's theorem (see \cite{siegel1929}), each such curve contributes only finitely many integer points; however, the infinite family of generating polynomials given by \Cref{polinom.Delta-generadores_deg=3} raises the possibility of constructing a family of elliptic curves that collectively yields infinitely many $\Delta$-primitive squares. We develop this perspective in \Cref{subsec:elliptic}.

%\paragraph{An elementary polynomial method.}

%evaluating the generating polynomial $f(x) = x(2x-1)(3x-2)$ at a suitably chosen prime $p$ forces $f(p)$ to be $\Delta$-primitive via a short argument based on divisibility by $p$.

%\paragraph{Obstructions: integers without the $\Delta$ property.}
The complementary part of the theory concerns integers that \emph{fail} the $\Delta$ condition. This is the content of \Cref{sec:Num.sin.peop.Delta}. Two guiding heuristics emerge from our analysis. The first is that a prime appearing in the factorization of $n$ which is ``large'' compared to $n$ obstructs the $\Delta$ property: concretely, $pk \notin \mathbb{N}(\Delta)$ whenever $p \geq k$, and an analogous phenomenon occurs for numbers of the form $p^{x}q^{y}$ when $q > p^{x}$, where $p,q$ are primes (\Cref{p>2k-2_entonces.pk.no tiene.prop.Delta,p^xq^y.no.tiene.prop.Delta.q>p^x}). The second heuristic is that integers with few distinct prime factors tend to fall outside $\mathbb{N}(\Delta)$: this is true for all prime powers, as well as for several families of integers with exactly two prime divisors. Sharper than these heuristics is a complete characterization of which products of three distinct primes lie in $\mathbb{N}(\Delta)$ (\Cref{pqr.prop.Delta}). As a consequence, the obstructions of \Cref{sec:Num.sin.peop.Delta} restrict the possible values of the divisor-counting function $\tau$ on $\mathbb{N}(\Delta)$ (\Cref{cor:tau-cota-refined}).

%Concretely, we prove that:
%\begin{itemize}
%	\item $2k \notin \mathbb{N}(\Delta)$ whenever $k$ is odd, so $\mathbb{N} - \mathbb{N}(\Delta)$ is infinite and every even element of $\mathbb{N}(\Delta)$ is a multiple of $4$;
%	\item $p^{k} \notin \mathbb{N}(\Delta)$ for every prime $p$ and every $k \geq 1$;
%	\item $pk \notin \mathbb{N}(\Delta)$ for every $k \geq 1$ and every prime $p \geq 2k-1$ (a quantitative form of the first heuristic);
%	\item $p^{x}q^{y} \notin \mathbb{N}(\Delta)$ whenever $p, q$ are distinct primes with $q > p^{x}$ and $y \geq 2$;
%	\item $\{pq,\, p^{2}q,\, pq^{2},\, p^{2}q^{2}\} \cap \mathbb{N}(\Delta) = \varnothing$ for all primes $p, q$ (a quantitative form of the second heuristic);
%	\item if $p^{k} q \in \mathbb{N}(\Delta)$, then $p^{k} q \in \{\, 2^{2h+1} q : q \in \{3, 5\},\ h \in \mathbb{N}\,\}$;
%	\item $pqr \in \mathbb{N}(\Delta)$ (with $p < q < r$ primes) can be completely characterized by a pair of arithmetic conditions involving the polynomial $p^{2} - p + 1$ and the triples $(q, r) \in \{(p+2, 2p+1), (2p-1, 3p-2)\}$; in particular, $105$, $385$ and $1729$ are the only such products with $p \leq 7$.
%\end{itemize}

\subsection{Returning to graphs}

Obstructions to the $\Delta$ condition translate back into obstructions at the level of the factor graph. As a sample of this translation, we close \Cref{sec:Num.sin.peop.Delta} with the following structural consequence: if $n \in \mathbb{N}$ is neither a perfect square nor an element of $\mathbb{N}(\Delta)$, then every $n$-simple induced cycle of $\Phi(S)$ has length $4$ (\Cref{teo:return_to_Phi(S)}). 

%\subsection{Organization}
%
%The paper is organized as follows. \Cref{sec:preliminaries} collects the basic preliminaries on split graphs, factor graphs and the 2-switch-degree that will be used throughout. \Cref{sec:bridge} establishes the bridge between Graph Theory and Number Theory: it contains the two directions of \Cref{thmintro:A} together with several corollaries relating $D^{*}_{n}$ to $n$-simple split graphs.
%%and $\varepsilon$-linear split graphs. 
%\Cref{sec:primitives} is devoted to $\Delta$-primitive numbers, their basic properties and the semigroup structure of $\mathbb{N}(\Delta^{2})$. \Cref{sec:polynomials} develops the polynomial method and deduces the infinitude of $\Delta$-primitives. Finally, \Cref{sec:obstructions} presents the negative side of the theory, culminating in the characterization of the products of three primes in $\mathbb{N}(\Delta)$ and in the cycle-length corollary mentioned above. Open problems and conjectures are stated as they arise.

%%%%%%%%%%%%%%%%%%%%%%%%%%%%%%%%%%%%%%%%%%%%%%%%%%%%%%%%%

\section{$\Delta$ property and $n$-simple triangles in $\Phi(S)$}\label{sec:Graphs<->NumberTheory}

\subsection{Basic facts about $\Delta$-triples}

\begin{proposition}
	\label{cotasup.D_n^*}
	Let $n, x, y$ be natural numbers and such that $x\leq y\leq\sqrt{n}$. Then,
	\begin{equation}
		\label{eq15}
		\frac{n}{y}-y\leq\frac{n}{x}-x,
	\end{equation} 
	and equality holds if and only if $x=y$. In particular, we have
	\[ \max\big(D_n^*-\{n-1\}\big)\leq\frac{n}{2}-2. \]
\end{proposition}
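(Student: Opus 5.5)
The plan is to prove \eqref{eq15} by a direct algebraic rearrangement, and then obtain the bound on $\max(D_n^*-\{n-1\})$ by applying \eqref{eq15} with $x=2$.

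\textbf{The inequality.} I will compute
\[
\Big(\frac{n}{x}-x\Big)-\Big(\frac{n}{y}-y\Big)=\frac{n(y-x)}{xy}+(y-x)=(y-x)\,\frac{n+xy}{xy}.
\]
Since $n+xy>0$ and $xy>0$, this difference is nonnegative exactly when $y\ge x$, which holds by hypothesis. Because $\frac{n+xy}{xy}>0$, the difference is zero if and only if $y=x$. This gives both the inequality and the equality case. Note that the hypothesis $y\le\sqrt n$ is not actually needed for this step; it only guarantees that both quantities are nonnegative, so they are genuine elements of $D_n^*$ when $x,y\in D_n$.

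\textbf{The bound on $D_n^*$.} Every element of $D_n^*$ has the form $\frac{n}{a}-a$ with $a\in D_n$ and $a\le\sqrt n$. Here $a$ is the smaller of the two complementary divisors, so the absolute value can be dropped.
\begin{itemize}
\item If $a=1$, the element is $n-1$, which is excluded.
\item Otherwise $a\ge 2$. If $2\mid n$ and $2\le\sqrt n$, then \eqref{eq15} with $x=2$, $y=a$ gives $\frac{n}{a}-a\le\frac n2-2$.
\item If $2\nmid n$, I cannot apply \eqref{eq15} with $x=2$ as a divisor, but the algebraic identity above never used divisibility. It holds for any reals $0<x\le y$. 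So the same computation with $x=2\le a$ still gives $\frac na-a\le\frac n2-2$.
\end{itemize}

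The only delicate point is the degenerate case where $D_n^*-\{n-1\}$ is empty, namely $n$ prime or $n=1$. There the statement about the maximum is vacuous, or should be read with the convention $\max\varnothing$ undefined or $-\infty$. I will remark that the bound is meant whenever the set is nonempty. Nonemptiness forces a divisor $a$ with $2\le a\le\sqrt n$, hence $n\ge4$, so $2\le\sqrt n$ and the argument above applies.

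I expect no real obstacle. The main care is stating that \eqref{eq15} is really an identity valid for real $x\le y$, so it can be applied with $x=2$ even when $2\nmid n$.
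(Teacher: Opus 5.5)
Your proof is correct and takes essentially the paper's route. The paper adds the termwise inequalities $n/y\le n/x$ and $-y\le -x$, and your factorization $(y-x)(n+xy)/(xy)$ just makes the equality case explicit; both then take $x=2$. Your case split on whether $2\mid n$ is unnecessary, since \eqref{eq15} is stated for arbitrary natural numbers $x\le y\le\sqrt{n}$, not for divisors of $n$.
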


\begin{proof}
	The hypothesis $x\leq y$ implies $-y\leq -x$ and $\frac{n}{y}\leq\frac{n}{x}$, which added together yield inequality \eqref{eq15}. To obtain the latter bound, notice that $\max(D_n^*)=n-1$, and then take $x=2$ in \eqref{eq15}.
\end{proof}

\begin{corollary}
	\label{cotasup.D^*capD^+}
	For all $n\in\mathbb{N}(\Delta)$:
	\[ \max(D_n^*\cap D_n^+)\leq \frac{2n}{3}-6. \]
\end{corollary}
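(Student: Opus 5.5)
The plan is to combine \Cref{cotasup.D_n^*} with the fact, recorded in the introduction, that the smallest element of $\mathbb{N}(\Delta)$ is $24$. Fix $n\in\mathbb{N}(\Delta)$ and an element $m\in D_n^*\cap D_n^+$. Write $m=s+t$ with $s,t\in D_n^*-\{0\}$.

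First I rule out $m=n-1$. Both $s$ and $t$ are nonzero, so neither can equal $n-1$: otherwise the other summand would be $m-(n-1)=0$. Hence $s,t\in D_n^*-\{n-1\}$, and \Cref{cotasup.D_n^*} gives $s,t\leq \frac n2-2$. Therefore $m=s+t\leq n-4<n-1$, so $m\neq n-1$. Since $m\in D_n^*$, we get $m\in D_n^*-\{n-1\}$, and a second application of \Cref{cotasup.D_n^*} yields
\[
m\leq \frac n2-2 .
\]

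It remains to compare $\frac n2-2$ with $\frac{2n}{3}-6$. The inequality $\frac n2-2\leq\frac{2n}{3}-6$ is equivalent to $4\leq \frac n6$, that is, to $n\geq 24$. Since $24$ is the least element of $\mathbb{N}(\Delta)$, this holds for every $n\in\mathbb{N}(\Delta)$, which gives the claimed bound.

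The main obstacle is the minimality of $24$, which the introduction asserts only ``by inspection''. To make the argument self-contained, I would justify it by a finite check of $n\leq 23$. This check shrinks quickly once I use necessary conditions for $D_n^*\cap D_n^+\neq\varnothing$. In particular, $D_n^*-\{0\}$ must contain at least two values, and by the first step these include some value $\leq \frac n2-2$. So $n$ needs at least three divisor pairs $(a,n/a)$, counting the pair $(1,n)$. This already restricts the search to $n$ with $\tau(n)\geq 5$. In practice these are the few values $12,16,18,20$ below $24$, and I would check them directly. If I wanted to avoid this check entirely, the fallback would be a sharper direct estimate. The two summands $s,t$ correspond to divisors $\geq 2$, and $m$ itself must come from a divisor $a\geq 2$ with $\frac na-a=s+t$. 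Bounding $s,t$ by $\frac n3-3$ whenever they come from divisors $\geq 3$ gives $m\leq\frac{2n}3-6$ directly in that case. The case where a summand comes from the divisor $2$ would then be handled separately, again via the bound $m\leq\frac n2-2$.
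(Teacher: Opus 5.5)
Your argument is correct, but it takes a different route from the paper.

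The paper fixes a $\Delta$-triple $(x,y,z)$ with $\frac{n}{x}-x=m$, via \Cref{Delta.prop.caracterizacion}. Since $x\ge 2$ and $x<y\le z$, both summands come from divisors $\ge 3$. So each is at most $\frac{n}{3}-3$ by \Cref{cotasup.D_n^*}, and $\frac{2n}{3}-6$ follows with no information about the size of $n$.

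You bound $m$ itself instead, and get $m\le\frac{n}{2}-2$ once $m\neq n-1$. Showing $m\neq n-1$ is essentially the argument the paper uses to force $x>1$. One small fix there: $s=n-1$ gives $t=m-(n-1)\le 0$ because $m\le \max D_n^*=n-1$, so your ``$=0$'' should read ``$\le 0$''.

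Your bound $m\le\frac{n}{2}-2$ holds for every $n$ and avoids $\Delta$-triples altogether. It is also sharper than the corollary for $n>24$. The two bounds coincide at $n=24$, where $m=10$ is attained.

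The price is the conversion step, which needs $\min\mathbb{N}(\Delta)=24$. The paper only asserts this ``by inspection''. Your reduction to $n\in\{12,16,18,20\}$ is right, and direct computation gives $D_n^*\cap D_n^+=\varnothing$ for each of these. So the proof is complete once that check is written out.

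Finally, your fallback is exactly the paper's argument. The ``divisor $2$'' case you planned to handle separately is empty. Since $d\mapsto\frac{n}{d}-d$ is strictly decreasing and $s,t<m$, the divisors producing $s$ and $t$ strictly exceed the divisor $\ge 2$ producing $m$.
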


\begin{proof}
	Since $D_n^*\cap D_n^+\neq\varnothing$, there exists a $\Delta$-triple $(x,y,z)$ for $n$ such that $\frac{n}{x}-x = \max(D_n^*\cap D_n^+)=m$.	Since $y,z\geq 3$, it follows from  \Cref{cotasup.D_n^*} that $m\leq 2\big(\frac{n}{3}-3\big)$.
\end{proof}

\begin{proposition}
	\label{Delta.prop.caracterizacion}
	A natural number $n$ satisfies the $\Delta$ condition if and only if there exists $(x,y,z)\in D_n^3$ such that $1<x<y\leq z<\sqrt{n}$ and
	\begin{equation}
		\frac{n}{x}-x = \frac{n}{y}-y + \frac{n}{z}-z.
		\label{ecuacion.Delta.prop}
	\end{equation}
\end{proposition}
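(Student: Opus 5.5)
The reverse implication is immediate. Given such a triple $(x,y,z)$, the numbers $\frac{n}{x}-x$, $\frac{n}{y}-y$ and $\frac{n}{z}-z$ all lie in $D_n^*$, since each of $x,y,z$ is paired with its complementary divisor. Because $y,z<\sqrt n$, the last two differences are nonzero. Hence by \eqref{ecuacion.Delta.prop} the quantity $\frac nx-x$ lies in $D_n^+$ as well, so $D_n^*\cap D_n^+\neq\varnothing$.

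For the forward implication, take $m\in D_n^*\cap D_n^+$. We normalise each difference of complementary divisors $|a-b|$ with $ab=n$ by writing it as $\frac{n}{d}-d$, where $d=\min(a,b)\le\sqrt n$. Applying this to $m$ and to the two summands gives $x,y,z\in D_n$ with $x,y,z\le\sqrt n$ and
\[
\tfrac nx-x=\bigl(\tfrac ny-y\bigr)+\bigl(\tfrac nz-z\bigr).
\]
The summands are nonzero by the definition of $D_n^+$, so $y,z<\sqrt n$. Relabelling the summands, we may assume $y\le z$.

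It remains to obtain $1<x<y$ and $x<\sqrt n$. The right-hand side is a sum of two positive terms, so the left-hand side is strictly greater than each of them; in particular $\frac nx-x>\frac ny-y>0$. This forces $x\neq\sqrt n$, hence $x<\sqrt n$. By the contrapositive of \Cref{cotasup.D_n^*} (the function $t\mapsto \frac nt-t$ is strictly decreasing on $(0,\sqrt n]$), it also gives $x<y$.

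The only genuinely delicate point is excluding $x=1$, i.e.\ $m=n-1$. For this we use \Cref{cotasup.D_n^*}. If $y=1$, then $x<y$ would be impossible, so $y\ge 2$, and likewise $z\ge 2$. By \Cref{cotasup.D_n^*}, each summand is then at most $\frac n2-2$, so the sum is at most $n-4<n-1$. Hence $m\neq n-1$, so $x\ne1$ and $1<x$. With the previous paragraph this yields $1<x<y\le z<\sqrt n$, completing the equivalence.
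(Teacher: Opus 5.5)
Your proof is correct and follows the paper's argument essentially step by step. Like the paper, you write each complementary-divisor difference as $\frac{n}{d}-d$ with $d\le\sqrt n$, use positivity of the summands to get $y,z<\sqrt n$ and $x<y\le z$, and rule out $x=1$ via the bound $\frac n2-2$ from \Cref{cotasup.D_n^*}. You additionally spell out the converse direction and the strict inequality $x<\sqrt n$, both of which the paper leaves implicit.
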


A triple $(x,y,z)$ of divisors of $n$ satisfying all the requirements of  \Cref{Delta.prop.caracterizacion} is called a $\Delta$-\emph{triple} for $n$. Through elementary algebraic manipulations, \eqref{ecuacion.Delta.prop} can be rewritten as
\begin{equation}
	(xy+xz-yz)n=xyz(z+y-x).
	\label{ecuacion2.Delta.prop}	
\end{equation}

\begin{proof}
	A number $c$ belongs to $D^*_{n}\cap D^+_{n}$ if and only if $c\in D^*_{n}$ and $c=a+b$, for some $a,b\in D^*_{n}-\{0\}$. Since $a,b,c\in D^*_{n}$, we can write them as $a=\frac{n}{z}-z$, $b=\frac{n}{y}-y$ and $c=\frac{n}{x}-x$, where $x,y,z$ are divisors of $n$. As $a,b>0$, then also $c>0$. Thus, $x^2,y^2,z^2<n$, so $x,y,z<\sqrt{n}$. Since $c=a+b$ and $a,b>0$, clearly $a,b<c$, that is, $x<y,z$. By symmetry on the right-hand side of \eqref{ecuacion.Delta.prop}, we may assume without loss of generality that $y\leq z$. Finally, suppose $x=1$. Since $y,z\geq 2$, we have by \Cref{cotasup.D_n^*} that $n-1=c=a+b\leq 2\big(\frac{n}{2}-2\big)=n-4$, which is a contradiction. Therefore, it must be $x>1$.
\end{proof}

%$(2,3,3)$ and $(4,5,5)$ are $\Delta$-triples for 24 and 40, respectively: 
%\[ \frac{24}{2}-2=10=2\Big(\frac{24}{3}-3\Big), \  \frac{40}{4}-4=6=2\Big(\frac{40}{5}-5\Big). \]

%A curious consequence of  \Cref{cotasup.D_n^*} is the exotic characterization of perfect squares we now present.
%
%\begin{corollary}
%	Let $D^*_n +D^*_n =\{x+y:x,y\in D^*_n\}$. An integer $n>1$ is a perfect square if and only if $D^*_n \subsetneq D^*_n+D^*_n.$ 
%\end{corollary}
%
%\begin{proof}
%	If $n$ is a square, then $0\in D^*_n$ and thus $x+0\in D^*_n+D^*_n$ for all $x\in D^*_n$. Hence, $D^*_n \subset D^*_n+D^*_n$. To see that this inclusion is always strict, note that $n-1\in D^*_n$ but $2n-2=(n-1)+(n-1)\in (D^*_n+D^*_n)-D^*_n$, since $n>1$.
%	
%	For the converse, suppose $D^*_n \subset D^*_n+D^*_n$. This means, in particular, that $n-1\in D^*_n\cap(D^*_n+D^*_n)$. Then, there exist $y,z\in D_n$ such that $y^2,z^2\leq n$ and
%	\begin{equation}
%		\frac{n}{z}-z+\frac{n}{y}-y=n-1.
%		\label{ecuac.Delta.prop.x=1}	
%	\end{equation} 
%	If $y,z\geq 2$, then by  \Cref{cotasup.D_n^*}, we would have
%	\[ \frac{n}{z}-z, \ \frac{n}{y}-y \leq\frac{n}{2}-2, \]
%	and thus (\Cref{ecuac.Delta.prop.x=1}) could not hold. Therefore, either $y=1$ or $z=1$. If $y=1$, then $n=z^2$. If $z=1$, then $n=y^2$.
%\end{proof}

\begin{proposition}
	\label{prop:y/x.cota.universal}
	If $(x, y, z)$ is a $\Delta$-triple for $n$, then $1 < \frac{y}{x} < 2$. In particular, $x\notin D_y$
	.
\end{proposition}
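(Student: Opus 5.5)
The lower bound $1<\frac{y}{x}$ is immediate from \Cref{Delta.prop.caracterizacion}, since a $\Delta$-triple satisfies $x<y$. All the work is in the upper bound $y<2x$, and the plan is to get it from the rewritten form \eqref{ecuacion2.Delta.prop}, $(xy+xz-yz)n=xyz(z+y-x)$, by a sign argument.

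First, the right-hand side is positive: $z+y-x>0$ because $y,z>x$. Since $n>0$, this forces
\[ xy+xz-yz>0, \quad\text{that is,}\quad \frac{1}{y}+\frac{1}{z}>\frac{1}{x}, \]
after dividing by $xyz$. Now use $y\le z$, so that $\frac1z\le\frac1y$. Then $\frac{1}{x}<\frac{2}{y}$, which gives $y<2x$.

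As a sanity check, the same conclusion follows directly from \eqref{ecuacion.Delta.prop}. Write $f(t)=\frac{n}{t}-t$, which is strictly decreasing for $t>0$. By \Cref{cotasup.D_n^*} we have $f(x)=f(y)+f(z)\le 2f(y)$. If $y\ge 2x$, then
\[ f(y)\le f(2x)=\frac{n}{2x}-2x<\frac12\left(\frac{n}{x}-x\right)=\frac12 f(x), \]
and the strict inequality holds because $2x>x/2$. That gives $f(x)\le 2f(y)<f(x)$, a contradiction. Either route works; I would present the algebraic one via \eqref{ecuacion2.Delta.prop}.

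For the ``in particular'' clause, suppose $x\in D_y$, i.e.\ $x\mid y$. Then $y=kx$ with $k$ an integer, and $1<k<2$, which is impossible.

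There is no real obstacle here. The only point needing care is the positivity of $xy+xz-yz$, which comes from the sign of the right-hand side of \eqref{ecuacion2.Delta.prop}.
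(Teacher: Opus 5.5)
Your proof is correct and is essentially the paper's argument: positivity of the right-hand side of \eqref{ecuacion2.Delta.prop} forces $xy+xz>yz$, and $y\le z$ then yields $y<2x$. The alternative check via the decreasing function $t\mapsto \frac{n}{t}-t$ is also valid, but it is not needed.
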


\begin{proof}
	Since $(x,y,z)$ is a $\Delta$-triple for $n$, we have $1<y/x$, $y/z\le 1$ and \eqref{ecuacion2.Delta.prop}. Thus, we must have $xy+xz>yz$, because $z+y-x, xyz$ and $n$ are all positive. Hence, $\frac{y}{x}<1+\frac{y}{z}\le 2$. If $x\mid y$, then $y/x\in\mathbb{N}$. However, $(1,2)\cap\mathbb{N}=\varnothing$. 
\end{proof}

\begin{proposition}\label{prop:bound-z}  % <-- ADJUST LABEL
	If $(x,y,z)$ is a $\Delta$-triple for $n$, then $z<x(x+1)$.
	In particular, writing $z=ax+b$ with $a,b\in\mathbb{Z}$ and $0\le b<x$,
	we have $1\le a\le x$.
\end{proposition}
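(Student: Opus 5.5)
The plan is to extract everything from the integrality and positivity of the coefficient in \eqref{ecuacion2.Delta.prop}, exactly as in the proof of \Cref{prop:y/x.cota.universal}. Since $(x,y,z)$ is a $\Delta$-triple, $n$, $xyz$ and $z+y-x$ are all positive (the last because $z\ge y>x$). So \eqref{ecuacion2.Delta.prop} forces $xy+xz-yz>0$. Earlier this inequality was read as a bound on $y/x$; here I will instead rearrange it to isolate $z$:
\[
xy > yz - xz = z(y-x).
\]
By \Cref{Delta.prop.caracterizacion} we have $y>x$, so $y-x\ge 1$ and we may divide to get
\[
z < \frac{xy}{y-x}.
\]

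The key step is to bound the right-hand side uniformly in $y$. I would write
\[
\frac{xy}{y-x} = x + \frac{x^2}{y-x}.
\]
This is decreasing in $y$ for $y>x$, so it is largest at the smallest admissible value $y=x+1$, where it equals $x+x^2$. Hence $z<x(x+1)$ for every $y$, and the case $y-x\ge 2$ only gives something stronger. I do not expect a real obstacle here. The only point needing care is to use the strict inequality $xy+xz-yz>0$ (coming from $n>0$) together with $y-x\ge 1$, rather than the weaker consequence $y<2x$, which alone would only give $z<x(2x-1)$.

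For the second claim, write $z=ax+b$ with $0\le b<x$ by the division algorithm. Since $z\ge y>x$, we have $z\ge x+1$, hence $a\ge 1$ (if $a=0$ then $z=b<x$). Conversely, if $a\ge x+1$, then $z\ge (x+1)x$, contradicting the bound just proved. Therefore $1\le a\le x$.
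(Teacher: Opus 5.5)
Your proposal is correct and follows essentially the same argument as the paper. Both derive $z<\frac{xy}{y-x}=x+\frac{x^2}{y-x}\le x+x^2$ from the positivity of the coefficient $xy+xz-yz$ in \eqref{ecuacion2.Delta.prop}, and both get $1\le a\le x$ from $z>x$ together with that bound.
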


\begin{proof}
	By \eqref{ecuacion2.Delta.prop}, we have
	$xy+xz>yz$. Hence,
	\[ z<\frac{xy}{y-x}=x+\frac{x^2}{y-x}\le x+x^2. \]
	It is clear that $a\ge 1$. On the other hand,  $a\ge x+1$ implies $z=ax+b\ge (x+1)x+b>z+b$, which is absurd. Therefore, $a\le x$.
\end{proof}

\begin{corollary}\label{cor:finite-fixed-component}  
	For each $t\in\mathbb{N}$ with $t\ge 2$, the set
	\[
	\mathbb{N}(\Delta,t)=\{\,n\in\mathbb{N}(\Delta):\ \text{$t$ is a component
		of some $\Delta$-triple of $n$}\,\}
	\]
	is finite.
\end{corollary}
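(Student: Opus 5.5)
The plan is to show that once one component of a $\Delta$-triple is fixed, only finitely many triples $(x,y,z)$ are possible, and each triple determines $n$. The key tool is \eqref{ecuacion2.Delta.prop}. Since $(x,y,z)$ is a $\Delta$-triple, the factor $xy+xz-yz$ is a positive integer, as already used in the proofs of \Cref{prop:y/x.cota.universal,prop:bound-z}. Hence
\[
n=\frac{xyz(z+y-x)}{xy+xz-yz}\le xyz(y+z-x).
\]
So $n$ is bounded, and in fact uniquely determined, by the triple. It therefore suffices to show that, for fixed $t\ge 2$, only finitely many $\Delta$-triples have $t$ as one of their components.

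I would bound all three components in terms of $t$, using the chain $1<x<y\le z$ together with \Cref{prop:bound-z}, which gives $z<x(x+1)$. There are three cases.

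\emph{Case $t=x$.} Then $x<y\le z<t(t+1)$, so there are finitely many choices of $(y,z)$.

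\emph{Case $t=y$.} Then $x<t$ and $t\le z<x(x+1)<t(t+1)$, again finitely many choices.

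\emph{Case $t=z$.} Then $x<y\le t$, so $x$ and $y$ range over a finite set.

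In each case the triple lies in $\{2,\dots,t^2+t\}^3$. Combined with the displayed bound, this gives $n<(t^2+t)^3\cdot 2(t^2+t)$, so $\mathbb{N}(\Delta,t)$ is contained in a finite interval. Even without an explicit bound, $\mathbb{N}(\Delta,t)$ is the image of a finite set of triples under the map $(x,y,z)\mapsto n$, so it is finite.

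The main point requiring care is the positivity of $xy+xz-yz$. The denominator must be at least $1$, so that $n$ is determined by the triple and is bounded. This follows because the right-hand side $xyz(z+y-x)$ of \eqref{ecuacion2.Delta.prop} is positive, since $z\ge y>x$, and $n>0$. Once that is noted, the rest is the case bookkeeping above.
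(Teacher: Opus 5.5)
Your proof is correct and follows essentially the paper's route: you bound all components of a triple containing $t$ using $x\le t$, $x<y\le z$ and $z<x(x+1)$ from \Cref{prop:bound-z}, and then use \eqref{ecuacion2.Delta.prop} with $xy+xz-yz\ge 1$ to see that each of the finitely many triples determines $n$. The paper instead fixes $x\in\{2,\dots,t\}$ and also uses $y<2x$, while you split on the position of $t$ and add the explicit bound $n<2(t^2+t)^4$; these differences are cosmetic.
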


\begin{proof}
	Every $\Delta$-triple $(x,y,z)$ containing $t$ as a component satisfies $x\le t$. By \Cref{prop:y/x.cota.universal,prop:bound-z}, once $x$ is fixed there are at most $x-1$ admissible values for $y$ (in $\{x+1,\dots,2x-1\}$) and at most $x^2-1$ admissible values for $z$ (in $\{y,\dots,x^2+x-1\}$). Summing over $x \in \{2,\ldots,t\}$ gives a finite number of admissible triples, and by \eqref{ecuacion2.Delta.prop} each triple determines $n$ uniquely.
\end{proof}

\begin{proposition}
	Let $(x,y,z)$ be a $\Delta$-triple for $n$. If $z=cy+d$, with $c,d\in\mathbb{Z}$ and $0\le d<y$, then $1\le c<x$. 
\end{proposition}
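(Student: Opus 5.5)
Since $y\le z$ and $0\le d<y$, the lower bound $c\ge 1$ is immediate: if $c=0$ then $z=d<y$, which is impossible. So the work is in showing $c<x$, i.e. $c\le x-1$. The plan is to reuse the inequality behind \Cref{prop:bound-z}, namely $xy+xz>yz$. This follows from \eqref{ecuacion2.Delta.prop}, because $n$, $xyz$ and $z+y-x$ are all positive. Rewriting it as $z(y-x)<xy$ gives
\[
z<\frac{xy}{y-x}.
\]

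Next we substitute $z=cy+d\ge cy$. This yields $cy(y-x)\le z(y-x)<xy$, and dividing by $y>0$ gives
\[
c\,(y-x)<x .
\]
Since $y>x$ and both are integers, $y-x\ge 1$, so $c<x$, which is the claim. (If one wants more, the same inequality gives the sharper bound $c<x/(y-x)$.)

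No real obstacle arises; the only point needing care is that $y-x\ge 1$, which comes from the strict inequality $x<y$ in the definition of a $\Delta$-triple (\Cref{Delta.prop.caracterizacion}). Note that dropping $d$ only weakens the inequality, so it is legitimate.
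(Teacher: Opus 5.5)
Your proof is correct and follows essentially the paper's route. Both arguments rest on $z<xy/(y-x)$ from \eqref{ecuacion2.Delta.prop}; the paper weakens this to $z<xy$ and rules out $c\ge x$ by contradiction, while you keep the factor $y-x$ and get the slightly sharper $c(y-x)<x$. For the lower bound you only excluded $c=0$. A negative $c$ is excluded just as easily, since it would give $z\le d-y<0$.
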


\begin{proof}
	By \eqref{ecuacion2.Delta.prop}, we know that $z<xy/(y-x)$, and so $z<xy$. It is clear that $c\ge 1$. On the other hand, $c\ge x$ implies $z=cy+d\ge xy+d>z+d$, which is absurd. Therefore, $c<x$.
\end{proof}

\begin{proposition}\label{prop:bound-z-lcm}  % <-- ADJUST LABEL
	If $(x,y,z)$ is a $\Delta$-triple for $n$, then $z<\operatorname{lcm}(x,y)$.
	Equivalently, $z\gcd(x,y)<xy$.
\end{proposition}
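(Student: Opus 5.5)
We aim to show $z<\operatorname{lcm}(x,y)$ by combining the size bound already used in \Cref{prop:bound-z} with a divisibility argument. Since $x,y,z$ all divide $n$, so does $\ell:=\operatorname{lcm}(x,y)$. Put $g=\gcd(x,y)$, so $\ell=xy/g$, and write $x=gx'$, $y=gy'$ with $\gcd(x',y')=1$.

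The first step is to rewrite \eqref{ecuacion.Delta.prop} as
\[
\frac{n}{x}-\frac{n}{y}=x-y+\frac{n}{z}-z .
\]
The left-hand side equals
\[
\frac{n(y-x)}{xy}=\frac{n}{\ell}\cdot\frac{y'-x'}{1}.
\]
The same relation can also be read off \eqref{ecuacion2.Delta.prop}: expanding gives
\[
n\bigl(x(y+z)-yz\bigr)=xyz(z+y-x),
\]
that is,
\[
nx(y+z)=yz\bigl(n+x(z+y-x)\bigr).
\]

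The key step is to prove that $\ell\mid z$ is impossible and, more strongly, that $z\ge\ell$ is impossible. The cleanest route is to compare $z$ with the quantity $xy/(y-x)$. From the proof of \Cref{prop:bound-z} we already have
\[
z<\frac{xy}{y-x}=\frac{\ell g}{y-x}=\frac{\ell}{y'-x'} .
\]
Since $x<y$, we have $y'-x'\ge 1$. Hence
\[
z<\frac{\ell}{y'-x'}\le \ell .
\]
That is the whole argument. Concretely, I would re-derive $xy+xz>yz$ from \eqref{ecuacion2.Delta.prop}, using the positivity of $n$, $xyz$ and $z+y-x$. This gives $z(y-x)<xy$, so
\[
z<\frac{xy}{y-x}.
\]
Then I substitute $y-x=g(y'-x')$ and obtain
\[
z<\frac{xy}{g(y'-x')}=\frac{\operatorname{lcm}(x,y)}{y'-x'}\le\operatorname{lcm}(x,y).
\]

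For the equivalent form, $z<\operatorname{lcm}(x,y)=xy/\gcd(x,y)$ holds if and only if $z\gcd(x,y)<xy$, which is immediate by multiplying through by $\gcd(x,y)$.

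There is no real obstacle. The only point to watch is that $y-x$ is a positive multiple of $g$, which follows from $g\mid x$, $g\mid y$ and $x<y$ (\Cref{Delta.prop.caracterizacion}). As a remark, the argument actually yields the sharper bound
\[
z<\frac{\operatorname{lcm}(x,y)}{(y-x)/\gcd(x,y)} .
\]
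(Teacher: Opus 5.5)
Your proof is correct and is essentially the paper's argument: positivity in \eqref{ecuacion2.Delta.prop} gives $z<xy/(y-x)$, and since $\gcd(x,y)$ divides the positive integer $y-x$, you get $y-x\ge\gcd(x,y)$ and hence $z<xy/\gcd(x,y)=\operatorname{lcm}(x,y)$. The opening material (the rewriting of \eqref{ecuacion.Delta.prop}, the expanded identity $nx(y+z)=yz\bigl(n+x(z+y-x)\bigr)$, and the remark about $\operatorname{lcm}(x,y)\mid z$) is never used and can be dropped.
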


\begin{proof}
	Let $g=\gcd(x,y)$. Since $g\mid x$ and $g\mid y$, we have $g\mid (y-x)$,
	so $y-x\ge g$. Using \eqref{ecuacion2.Delta.prop}: $z<xy/(y-x)\le xy/g=\operatorname{lcm}(x,y)$.
\end{proof}

%%%%%%%%%%%%%%%%%%%%%%%%%%%%%%%%%%%%%%%%%%%%%%%%%%%%%%

\subsection{Optimal bounds on $n$ in terms of $x$}\label{subsec:optimal-bounds} 

The bounds on $y$ and $z$ established in \Cref{prop:y/x.cota.universal,prop:bound-z} constrain the components of any $\Delta$-triple, but do not by themselves bound $n$. In this subsection we determine the optimal upper bound on $n$ in terms of $x$, separately for each of the two regimes $y=z$ and $y<z$. Both bounds rest on a single change of variables: setting $u = y-x$ and $v = z-x$, the equality \eqref{ecuacion2.Delta.prop} characterizing a $\Delta$-triple becomes
\begin{equation}
	\label{eq_delta_u=y-x__v=z-x}
	n \;=\; n(x,u,v) \;=\; \frac{x(x+u)(x+v)(x+u+v)}{x^{2}-uv},
\end{equation}
and the admissibility condition $xy+xz-yz \ge 1$ reduces to $uv \le x^{2}-1$. By \Cref{prop:y/x.cota.universal,prop:bound-z}, the integers $u,v$ satisfy $1 \le u \le v \le x^{2}-1$. The two regimes $y=z$ and $y<z$ correspond to $u=v$ and $u<v$, respectively. In each, the maximization of $n$ reduces to elementary monotonicity arguments.

\begin{theorem}\label{prop:max-duplicated}
	Let $x$ be an integer $\geq 2$ and let $F(x)=x(2x-1)(3x-2)$. 
	\begin{enumerate}
		\item If $(x,y,y)$ is a $\Delta$-triple for $n$, then $n\le F(x)$.
		
		\item $n=F(x)$ if and only if $y=2x-1$.
		
		\item $(x,2x-1,2x-1)$ is a $\Delta$-triple for $F(x)$.
	\end{enumerate}
\end{theorem}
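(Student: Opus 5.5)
In the regime $y=z$ we specialize \eqref{eq_delta_u=y-x__v=z-x} to $u=v$. Since $x^2-u^2=(x-u)(x+u)$, the factor $x+u$ cancels and
\[
n \;=\; \frac{x(x+u)(x+2u)}{x-u},
\]
with admissibility $u^2\le x^2-1$, i.e.\ $1\le u\le x-1$. This is where \Cref{prop:y/x.cota.universal} enters: it gives $y<2x$, which is exactly $u\le x-1$. We then regard $g(u)=x(x+u)(x+2u)/(x-u)$ as a function of a real variable on $[1,x-1]$. The numerator is positive and strictly increasing, and the denominator is positive and strictly decreasing, so $g$ is strictly increasing. Hence
\[
n\le g(x-1)=x(2x-1)(3x-2)=F(x),
\]
which proves item 1. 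Strict monotonicity also gives item 2: $n=F(x)$ holds if and only if $u=x-1$, that is, $y=2x-1$.

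For item 3 we verify directly that $(x,2x-1,2x-1)$ is a $\Delta$-triple for $F(x)$, by checking each requirement of \Cref{Delta.prop.caracterizacion} in turn.
\begin{enumerate}
\item Divisibility: $x$ and $2x-1$ obviously divide $F(x)$.
\item Ordering: $1<x<2x-1$ since $x\ge 2$.
\item Size: $(2x-1)^2<F(x)$ is equivalent to $2x-1<x(3x-2)$, i.e.\ $3x^2-4x+1>0$, i.e.\ $(3x-1)(x-1)>0$, which holds for $x\ge2$.
\item Equation: \eqref{ecuacion2.Delta.prop} with $y=z=2x-1$ reads $(x(4x-2)-(2x-1)^2)\,n = x(2x-1)^2(3x-2)$. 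The left coefficient simplifies to $(2x-1)(2x-(2x-1))=2x-1$, so $n=x(2x-1)(3x-2)=F(x)$, as required.
\end{enumerate}

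The only step needing care is the reduction: cancelling $x+u$ and confirming that the range of $u$ is exactly $1\le u\le x-1$. Without that bound, the $uv\le x^2-1$ admissibility region would look larger. After it, the monotonicity argument is routine.
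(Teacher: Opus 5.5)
Your proof is correct and follows essentially the same route as the paper: you specialize the $(u,v)$-formula to $u=v$, show that $n(u)=x(x+u)(x+2u)/(x-u)$ is strictly increasing for $1\le u\le x-1$, and verify item 3 directly against the characterization of $\Delta$-triples. The paper proves monotonicity by writing $n(u+1)/n(u)$ as a product of three factors each exceeding $1$, whereas you use that the numerator increases and the denominator decreases, which is equivalent; also, for $u=v$ the condition $u^2\le x^2-1$ already forces $u\le x-1$, so your appeal to $y<2x$ is consistent but not needed.
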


\begin{proof}
	\begin{enumerate}[(1).]
		\item Setting $v = u$ in \eqref{eq_delta_u=y-x__v=z-x} gives
		\[
		n(u) \;=\; \frac{x(x+u)(x+2u)}{x-u},
		\]
		where $u\in[x-1]$. We show $n$ is strictly increasing in $u$ on this range. For $1 \le u \le x-2$, we have
		\[
		\frac{n(u+1)}{n(u)} \;=\; \frac{x+u+1}{x+u} \cdot \frac{x+2u+2}{x+2u} \cdot \frac{x-u}{x-u-1} \;>\; 1,
		\]
		since each of the three factors exceeds $1$. Hence, the maximum is attained at $u = x-1$, that is, $y = 2x-1$, where $n = F(x)$.
		
		\item Proved in (1).
		
		\item Let $y=2x-1$ and $n=F(x)$. Then, $x,y\in D_n$, $1=2x-y$, and $3x-2=2y-x$. Substituting all of this into the identity $1\cdot n=F(x)$ gives $(2x-y)n=xy(2y-x)$, which is exactly \eqref{ecuacion2.Delta.prop} for $z=y$. Since $(2x-1)^2<F(x)$ holds for all $x\geq 2$, we have  $2\leq x <y\leq z<\sqrt{n}$. Hence, $(x,y,y)$ is a $\Delta$-triple for $n$ by  \Cref{Delta.prop.caracterizacion}.
	\end{enumerate}
\end{proof}

\begin{theorem}\label{prop:max-generic}
	Let $x$ be an integer $\geq 2$ and let $F(x)=x^{2}(x+1)^{2}(x^{2}+x-1)$. 
	\begin{enumerate}
		\item If $(x,y,z)$ is a $\Delta$-triple for $n$ with $y<z$, then $n\le F(x)$.
		
		\item $n=F(x)$ if and only if $(y,z)=(x+1,\,x^{2}+x-1)$.
		 
		\item $(x,x+1,x^2+x-1)$ is a $\Delta$-triple for $F(x)$.
	\end{enumerate}
\end{theorem}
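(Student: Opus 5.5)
We work in the variables $u=y-x$, $v=z-x$ and use \eqref{eq_delta_u=y-x__v=z-x}:
\[
n(u,v)=\frac{x(x+u)(x+v)(x+u+v)}{x^{2}-uv},
\]
where the regime $y<z$ means $1\le u<v$ and admissibility means $uv\le x^2-1$. The plan is to maximize $n(u,v)$ over real points of this region. Then we check that the maximizer is an integer point and is unique.

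\emph{Monotonicity in $v$.} Fix $u$. The numerator $x(x+u)(x+v)(x+u+v)$ is strictly increasing in $v$. The denominator $x^2-uv$ is positive and strictly decreasing in $v$. Hence $n$ is strictly increasing in $v$, and for each fixed $u$ the maximum is attained at the largest admissible $v$, namely $v=\lfloor (x^2-1)/u\rfloor$.

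\emph{Comparison across $u$.} For $u=1$ this gives $v=x^2-1$, so $z=x^2+x-1$. The denominator is then $x^2-(x^2-1)=1$, and
\[
n=x(x+1)(x^2+x-1)\,x^2\cdot\frac{x+1}{x}=x^2(x+1)^2(x^2+x-1)=F(x).
\]
For $u\ge 2$ we need $n(u,v)<F(x)$ whenever $uv\le x^2-1$ and $v>u$. Since the denominator is at least $1$, a crude bound suffices. The product of the three factors $(x+u)(x+v)(x+u+v)$ is maximized, under $uv\le x^2-1$ with $u\ge2$, when $u$ is as small as possible. This holds because for fixed product $uv=P$ the sum $u+v$ decreases as $u$ grows toward $\sqrt P$, and $(x+u)(x+v)=x^2+x(u+v)+P$. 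So, treating $v=P/u$ with $P\le x^2-1$, both $(x+u)(x+v)$ and $x+u+v$ are decreasing in $u$ on $[1,\sqrt P]$.

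This alone is not enough, however, because the denominator also matters. For the $u=1$ branch the denominator is exactly $1$, while for $u\ge2$ it is at least $1$. So
\[
n(u,v)\le x(x+u)(x+v)(x+u+v)\le x(x+2)\Big(x+\tfrac{x^2-1}{2}\Big)\Big(x+2+\tfrac{x^2-1}{2}\Big).
\]
We compare this with $F(x)=x(x+1)(x^2+x)(x^2+x-1)$. The right-hand side is roughly $x^5/4$ against roughly $x^6$ for $F$, since
\[
(x+2)\cdot\frac{x^2+2x-1}{2}\cdot\frac{x^2+2x+3}{2}\quad\text{vs}\quad (x+1)(x^2+x)(x^2+x-1).
\]
A direct polynomial inequality should therefore settle all $x\ge2$; small $x$ can be checked by hand. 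The main obstacle is handling the possibility of a small denominator, which may not be $1$ but could be $1$ even when $u\ge2$ (e.g.\ $uv=x^2-1$). The crude bound with denominator $\ge1$ handles exactly this worst case, so I expect it to work. Strictness gives uniqueness in part (2), together with the strict monotonicity in $v$ for $u=1$: any $v<x^2-1$ makes $n$ strictly smaller.

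\emph{Part (3).} Set $y=x+1$, $z=x^2+x-1$ and $n=F(x)$. Then $xy+xz-yz=1$, and $z+y-x=x^2+x$, so $xyz(z+y-x)=x^2(x+1)^2(x^2+x-1)=n$. This verifies \eqref{ecuacion2.Delta.prop}. Clearly $x,y,z$ divide $n$. The inequalities $1<x<y<z<\sqrt n$ hold since $z^2<x^2(x+1)^2\le n$. By \Cref{Delta.prop.caracterizacion}, $(x,x+1,x^2+x-1)$ is a $\Delta$-triple for $F(x)$.
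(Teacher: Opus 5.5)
Your proof is correct, and for parts (1)--(2) it takes a genuinely different route from the paper's.

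\textbf{The paper's route.} The paper never splits into cases. From $(u-1)(v-1)\ge 0$ it gets $u+v\le p+1$ with $p=uv$. Hence $n\le f(p)=\frac{x(x+1)(x+p)(x+p+1)}{x^2-p}$, a one-variable function that is strictly increasing in $p$, so $n\le f(x^2-1)=F(x)$. For the equality case, $y\ge x+2$ makes the $(u-1)(v-1)$ step strict, while $y=x+1$, $z<x^2+x-1$ is handled by strict monotonicity of $f$.

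\textbf{Your route.} Your $u=1$ branch is exactly that monotonicity, since $n(1,v)=f(v)$. For $u\ge 2$, however, you discard the denominator and use the weaker estimate $u+v\le 2+\frac{p}{2}$, which is just $(u-2)(v-2)\ge 0$. You pay for this crudeness with an explicit polynomial inequality.

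\textbf{The polynomial inequality.} That inequality does hold, but your heuristic for it is off. Both sides have degree $6$: your bound behaves like $x^6/4$, not $x^5/4$. So the slack is only a factor of $4$ in the leading coefficient, and you should verify the inequality rather than say it ``should'' work. Clearing the common factor $x$ and multiplying by $4$, the required inequality
$(x+2)(x^2+2x-1)(x^2+2x+3)<4(x+1)(x^2+x)(x^2+x-1)$
becomes $3x^5+6x^4-6x^3-20x^2-9x+6>0$. This holds for every $x\ge 2$, because there $3x^5-6x^3\ge 6x^3$ and $6x^4-20x^2\ge 4x^2$. For $x=2$ the case $u\ge 2$ is in fact vacuous, since $2\le u<v$ forces $uv\ge 6>3$.

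\textbf{Comparison.} The paper's route is shorter and needs no polynomial check, because the denominator is kept throughout. Yours costs that check but gives a quantitative bonus: every $\Delta$-triple with $y\ge x+2$ yields an $n$ that is asymptotically at most $F(x)/4$. Part (3) matches the paper's verification.
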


\begin{proof}
	\begin{enumerate}[(1).]
		\item Recall that $u=y-x$ and $v=z-x$, with $1 \le u < v$ and $2\le uv \le x^{2}-1$ (see \eqref{eq_delta_u=y-x__v=z-x}). Let $p = uv$. Since $(u-1)(v-1) \ge 0$, we have $u+v\le p+1$. Thus,
		\begin{align*}
			(x+u)(x+v) &\;\le\; x^{2} + x(p+1) + p \;=\; (x+1)(x+p),\\
			x+u+v &\;\le\; x+p+1.
		\end{align*}
		Using these inequalities to bound \eqref{eq_delta_u=y-x__v=z-x} yields
		\[
		n \;\le\; \frac{x(x+1)(x+p)(x+p+1)}{x^{2}-p} \;=\; f(p).
		\]
		For $2 \le p \le x^{2}-2$ we have
		\[
		\frac{f(p+1)}{f(p)} \;=\; \frac{x+p+2}{x+p}\cdot\frac{x^{2}-p}{x^{2}-p-1} \;>\; 1,
		\]
		since both factors exceed $1$. Hence, $f$ is strictly increasing on $\{2,\dots,x^{2}-1\}$, and $n\le f(x^2-1)=F(x)$.
		
		\item We will show that $(y,z) \ne (x+1,\,x^{2}+x-1)$ implies $n<F(x)$ (the converse is straightforward to check). By \Cref{prop:y/x.cota.universal,prop:bound-z}, we have $y \ge x+1$ and $z \le x^{2}+x-1$. 
		
		If $y \ge x+2$, then $z \ge y+1 \ge x+3$, so $(y-x-1)(z-x-1)\ge 1\cdot 2>0$,	which expands to $u+v < p + 1$. Hence, $(x+u)(x+v)<(x+1)(x+p)$ and $x+u+v<x+p+1$. Therefore, $n < f(p) \le f(x^{2}-1)=F(x)$. 
		
		If $y = x+1$ and $z \le x^{2}+x-2$, then $p = z-x \le x^{2}-2$, so by the strict monotonicity of $f$: $n\le f(p)<f(x^2-1)=F(x)$. 
		
		\item Substituting $y=x+1$ and $z=x^2+x-1$ into
		\eqref{ecuacion2.Delta.prop} and simplifying gives $\bigl(x(x+1)-z\bigr)\,n=x(x+1)\,z\,(z+1)$.
		Since $x(x+1)-z=1$, it follows that $n=x(x+1)\,z(z+1)=F(x)$.
		We now verify the hypotheses of \Cref{Delta.prop.caracterizacion}: $x$, $x+1$, $x^2+x-1\in D_n$ by construction; $1<x<x+1\le x^2+x-1$ holds for $x\ge 2$; and $z^2<n$ because
		$n-z^2=z(x^2y^2-z)>0$ (recall that $z<xy$).
	\end{enumerate}
\end{proof}

Observe from \Cref{prop:max-generic}(3) that the family of $\Delta$-triples $\{(x, x+1, x^2 + x - 1):x\ge 2\}$ saturates the bound $z<x(x+1)$ of \Cref{prop:bound-z}.

\subsection{Arithmetic constraint when $y=z$}\label{subsec:descent}

The optimal bounds of \Cref{subsec:optimal-bounds} measure how large $n$ can be; they say nothing about how its prime structure is organized. We close the section by recording the arithmetic counterpart for the duplicated regime $y = z$. Writing $x=ad$ and $y=bd$, where $d = \gcd(x, y)$ and $\gcd(a,b)=1$, shows that \eqref{ecuacion2.Delta.prop} becomes a descent identity that ties $n$ to the square factor $d^2$. Moreover, we obtain a sharp divisibility constraint: $\gcd(2a-b,ab(2b-a))|6$.

\begin{theorem}
	\label{thm:duplicated-descent}
	Let $(x, y, y)$ be a $\Delta$-triple for $n$, set 
	$d = \gcd(x, y)$, and write $x = da$, $y = db$, with $\gcd(a, b) = 1$. Then:
	\begin{equation}
		\label{eq:duplicated-descent}
		(2a - b)\, n \,=\, d^2\, ab\,(2b - a),
	\end{equation}
	\[
	\gcd\bigl(2a - b,\ ab(2b - a)\bigr) \,\Big|\, 6,
	\]
	and
	\[
	\frac{2a - b}{\gcd(2a - b,\ 6)} \;\Big|\; d^2.
	\]
\end{theorem}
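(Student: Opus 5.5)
Plan: substitute $z=y$ into \eqref{ecuacion2.Delta.prop}, which gives $(2xy-y^2)n=xy^2(2y-x)$. Dividing by $y$ yields $(2x-y)n=xy(2y-x)$. Then put $x=da$, $y=db$. The left side becomes $d(2a-b)n$ and the right side becomes $d^3ab(2b-a)$. Cancelling one factor of $d$ gives \eqref{eq:duplicated-descent}. By \Cref{prop:y/x.cota.universal}, $1<b/a<2$, so $2a-b>0$ and $2b-a>0$. Hence every quantity involved is a positive integer, and the divisibility arguments that follow make sense.

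For the gcd statement, I will bound $\gcd(2a-b,a)$, $\gcd(2a-b,b)$ and $\gcd(2a-b,2b-a)$ separately.

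1. Any common divisor of $2a-b$ and $a$ divides $b$. Since $\gcd(a,b)=1$, it follows that $\gcd(2a-b,a)=1$.
2. Any common divisor of $2a-b$ and $b$ divides $2a$, and therefore divides $2$. So $\gcd(2a-b,b)\mid 2$.
3. We have $2(2b-a)+(2a-b)=3b$ and $(2b-a)+2(2a-b)=3a$. So any common divisor of $2a-b$ and $2b-a$ divides both $3a$ and $3b$, hence divides $3$. So $\gcd(2a-b,2b-a)\mid 3$.

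The delicate point is combining these three facts into $\gcd(2a-b,ab(2b-a))\mid 6$. Multiplying the three bounds would give a divisor of $1\cdot 2\cdot 3=6$, but only if the factors $a$, $b$, $2b-a$ contribute to the gcd "independently." The gcd with a product is at most the product of the gcds, so $\gcd(m,XYZ)\mid\gcd(m,X)\gcd(m,Y)\gcd(m,Z)$. This gives $\gcd(2a-b,ab(2b-a))\mid 1\cdot 2\cdot 3=6$.

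To be careful about higher prime powers, I will argue one prime at a time. Let $p$ be a prime dividing both $2a-b$ and $ab(2b-a)$.

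1. Then $p\nmid a$, because $\gcd(2a-b,a)=1$.
2. If $p\mid b$, then $p=2$. Moreover $p\nmid 2b-a$, since otherwise $p\mid a$. If $4\mid b$ and $4\mid 2a-b$, then $4\mid 2a$, so $a$ is even, which is a contradiction. Hence the 2-part of the gcd is exactly $2$.
3. If $p\mid 2b-a$, then $p=3$. If $9$ divided both $2a-b$ and $2b-a$, then $9\mid 3a$ and $9\mid 3b$, so $3\mid a$ and $3\mid b$, which is a contradiction. Also $3\nmid b$, since otherwise $3\mid a$. So the 3-part of the gcd is exactly $3$.

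This settles $\gcd(2a-b,ab(2b-a))\mid 6$.

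For the last claim, let $m=2a-b$ and $g=\gcd(m,ab(2b-a))$, so $g\mid 6$. From \eqref{eq:duplicated-descent} we have $m\mid d^2ab(2b-a)$. Dividing through by $g$, the quotients $m/g$ and $ab(2b-a)/g$ are coprime, so $m/g\mid d^2$. Finally, $g\mid\gcd(m,6)$ because $g\mid m$ and $g\mid 6$. Therefore $m/\gcd(m,6)$ divides $m/g$, and hence divides $d^2$.
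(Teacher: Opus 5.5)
Your proof is correct and follows the paper's argument step for step. It uses the same substitution $z=y$ with cancellation of $y$ and then of $d$, the same three pairwise gcd bounds combined via $\gcd(m,XYZ)\mid\gcd(m,X)\gcd(m,Y)\gcd(m,Z)$, and the same coprimality argument giving $(2a-b)/\gcd(2a-b,6)\mid d^2$. Your prime-by-prime check is redundant once that product bound is in hand, though harmless, and your explicit use of $1<b/a<2$ to make $2a-b$ and $2b-a$ positive is a small point the paper leaves implicit.
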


\begin{proof}
	Substituting $z = y$ in \eqref{ecuacion2.Delta.prop} yields $(2x - y)\, n = xy\,(2y - x)$. Replacing $x = da$ and $y = db$ gives \eqref{eq:duplicated-descent}. For the $\gcd$ bound, we estimate $\gcd(2a-b,\, ab(2b-a))$ by using that:
	\[
	\gcd\bigl(2a - b, ab(2b - a)\bigr) 
	\,\Big|\, 
	\gcd(2a - b, a) \gcd(2a - b, b) \gcd(2a - b, 2b - a).
	\]
	Any common divisor $g$ of $2a - b$ and $a$ divides $2a - (2a - b) = b$. Hence $g \mid \gcd(a, b) = 1$. So, $\gcd(2a - b, a) = 1$.
	
	Any common divisor $g$ of $2a - b$ and $b$ divides $(2a - b) + b = 2a$. Hence $g \mid \gcd(b, 2a)$. Since $\gcd(a, b) = 1$, we have $\gcd(b, 2a) = \gcd(b, 2)$, which divides $2$. Thus, $\gcd(2a - b, b) \mid 2$.
	
	Any common divisor $g$ of $2a - b$ and $2b - a$ divides both $2(2a - b) + (2b - a) = 3a$ and $(2a - b) + 2(2b - a) = 3b$. Hence $g \mid \gcd(3a, 3b) = 3$. Therefore, $\gcd(2a - b, 2b - a) \mid 3$.
	
	Combining these three facts: $\gcd(2a - b, ab(2b - a)) \mid 1 \cdot 2 \cdot 3 = 6$.
	
	Finally, from \eqref{eq:duplicated-descent} we have $(2a - b) \mid d^2\, ab\,(2b - a)$. Writing $u_1 = \gcd(2a - b, ab(2b - a))$ and $u_2 = (2a - b)/u_1$, we have $\gcd\bigl(u_2, ab(2b - a)/u_1\bigr) = 1$, so $u_2 \mid d^2$. Since $u_1 \mid \gcd(2a-b,\, 6)$ (because $u_1 \mid 2a-b$ and $u_1 \mid 6$), it follows that $(2a-b)/\gcd(2a-b,\,6) \mid u_2 \mid d^2$.
\end{proof}

%%%%%%%%%%%%%%%%%%%%%%%%%%%%%%%%%%%%%%%%%%%%%%%%%%%%%%%%

\subsection{From Graphs to Numbers}

%We now show the connection that exists between split graphs and the $\Delta$ property. 
As announced in \Cref{sec:intro}, the following theorem, is very important because it provides a bridge from Graph Theory to Number Theory. Its proof is surprisingly simple.

\begin{theorem}
	\label{triang.n-simple&tipo0.implica.Delta.prop}
	Let $S$ be a split graph.
	\begin{enumerate}
		\item If $\sigma_{uv}\neq 0$, then $|d_u-d_v|\in D_{\sigma_{uv}}^*$.
		\item If $\Phi(S)$ is an $n$-simple triangle of type 0 (see \Cref{triangulos.permitidos}), then $n\in\mathbb{N}(\Delta)$.
	\end{enumerate}
\end{theorem}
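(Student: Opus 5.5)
Part (1) comes straight from the formula $\sigma_{uv}=(d_u-\eta_{uv})(d_v-\eta_{uv})$. Put $a=d_u-\eta_{uv}$ and $b=d_v-\eta_{uv}$. Both are nonnegative integers, since $\eta_{uv}=|N_u\cap N_v|\le\min(d_u,d_v)$. If $\sigma_{uv}\neq 0$, then $a,b\ge 1$ and $ab=\sigma_{uv}$, so $a$ and $b$ are complementary divisors of $\sigma_{uv}$. Hence $|a-b|=|d_u-d_v|\in D^*_{\sigma_{uv}}$ by definition.

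For part (2), label the triangle as in \Cref{triangulos.permitidos}, so that $\vec{T}=\Delta_0$ has arcs $(a,b)$, $(b,c)$ and $(a,c)$. By the definition of the flow configuration, an arc $(u,v)$ with no reverse arc means $d_u<d_v$; a pair with $d_u=d_v$ would give arcs both ways and hence a $\Delta_3$-type edge. Type $0$ therefore forces strict inequalities $d_a<d_b<d_c$. All three multiplicities are positive and equal to $n$, so part (1) applies to each edge and gives
\[ d_b-d_a,\ d_c-d_b,\ d_c-d_a\in D^*_n. \]
The first two are nonzero and their sum is $d_c-d_a$. Thus $d_c-d_a\in D^*_n\cap D^+_n$, which means $n\in\mathbb{N}(\Delta)$.

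The only point needing care is the orientation convention, namely that a one-directional arc in $\Delta_0$ encodes strict inequality of degrees. If $d_a=d_b$, then $d_b-d_a=0$ and the sum would fall outside $D^+_n$. This is settled by the definition: $d_u\le d_v$ and $d_v\le d_u$ together produce both arcs, so equality never yields a single arc. Everything else is immediate.
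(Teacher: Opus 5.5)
Your proposal is correct and follows the paper's proof: in (1) you use that $d_u-\eta_{uv}$ and $d_v-\eta_{uv}$ are complementary divisors of $\sigma_{uv}$, and in (2) you use $d_c-d_a=(d_b-d_a)+(d_c-d_b)$ with all three differences in $D^*_n$. You also spell out two points the paper leaves implicit: that both factors are positive (via $\eta_{uv}\le\min(d_u,d_v)$), and that the one-way arcs of $\Delta_0$ force strict degree inequalities, so both summands are nonzero.
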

\begin{figure}[h]
	\centering
	\begin{tikzpicture}[scale=2, every node/.style={circle, draw, thick, minimum size=6pt, inner sep=2pt}]
		
		% --- Primer triángulo (K_3 no dirigido) ---
		\node [label=above left:{$\Phi(S)$}](x1) at (0,0) {$a$};
		\node (x2) at (1,0) {$c$};
		\node (x3) at (0.5, {0.5*sqrt(3)}) {$b$};
		
		\draw[thick] (x1) -- (x2) node[midway, draw=none, fill=white, inner sep=1pt] {$n$};
		\draw[thick] (x2) -- (x3) node[midway, draw=none, fill=white, inner sep=1pt] {$n$};
		\draw[thick] (x3) -- (x1) node[midway, draw=none, fill=white, inner sep=1pt] {$n$};
		
		\draw[bend left=15] (x1) to (x2) ;
		\draw[bend right=15] (x1) to (x3) ;
		\draw[bend left=15] (x2) to (x3) ;
		
		% --- Segundo triángulo (dirigido) ---
		\node (a) at (2,0) {$d_a$};
		\node [label=above right:{$S$}](c) at (3,0) {$d_c$};
		\node (b) at (2.5, {0.5*sqrt(3)}) {$d_b$};
		
		\draw[->, thick, >=stealth, line width=2pt, scale=1.5] (a) -- (c); % base ac
		\draw[->, thick, >=stealth, line width=2pt, scale=1.5] (a) -- (b);
		\draw[->, thick, >=stealth, line width=2pt, scale=1.5] (b) -- (c);
	\end{tikzpicture}
	\caption{Hypothesis of  \Cref{triang.n-simple&tipo0.implica.Delta.prop}.}
\end{figure}

\begin{proof}
	(1). Since $0\neq\sigma_{uv}=(d_u-\eta_{uv})(d_v-\eta_{uv})$, we have that $d_u-\eta_{uv}$ and $d_v-\eta_{uv}$ are complementary divisors of $\sigma_{uv}$. Therefore,
	\[ |(d_u-\eta_{uv})-(d_v-\eta_{uv})|=|d_u-d_v|\in D_{\sigma_{uv}}^*. \]
	
	(2). From (1), we have $|d_u-d_v|\in D_n^*$ for every edge $uv\in\Phi$. Since $\vec{\Phi}=\Delta_0$, it follows that $d_c>d_a$, $d_b>d_a$, and $d_c>d_b$. Moreover, the arcs of $\vec{\Phi}$ indicate that the degree increase (in $S$) from $a$ to $c$ equals the increase from $a$ to $b$ plus the increase from $b$ to $c$. Hence,
	\[ d_c-d_a =(d_b-d_a)+(d_c-d_b)\in D^*_n \cap D^+_n, \] 
	which shows that $D^*_n \cap D^+_n\neq\varnothing$.
\end{proof}

% \Cref{triang.n-simple&tipo0.implica.Delta.prop} has some corollaries that relate the set $D_n^*$ to split graphs, especially those whose associated factor graphs are $n$-simple.
%and $\varepsilon$-linear.

\begin{lemma}
	\label{S.squarefree.implica.triang_tipo0}
	Let $S$ be a split graph and let $T$ be a triangle in $\Phi(S)$. If $\sigma_{uv}$ is not a positive square for every $uv\in E(T)$, then $\vec{T}=\Delta_0$.
\end{lemma}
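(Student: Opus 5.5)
We argue directly from the definition of $\vec{\Phi}(S)$ and the formula $\sigma_{uv}=(d_u-\eta_{uv})(d_v-\eta_{uv})$. Let $T$ have vertices $u,v,w$; these are the only three arcs of $\vec{T}$ that matter.

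The first step is to show that no edge of $T$ carries a double arc. By definition, both arcs $(u,v)$ and $(v,u)$ lie in $\vec{\Phi}(S)$ exactly when $\sigma_{uv}>0$ and $d_u=d_v$. In that case the two factors coincide, $d_u-\eta_{uv}=d_v-\eta_{uv}$, so $\sigma_{uv}=(d_u-\eta_{uv})^2$ is a positive square. This contradicts the hypothesis. The same conclusion follows from \Cref{triang.n-simple&tipo0.implica.Delta.prop}(1): a nonzero $\sigma_{uv}$ gives $|d_u-d_v|\in D^*_{\sigma_{uv}}$, and $0\in D^*_m$ holds iff $m$ is a perfect square. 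Hence $d_u,d_v,d_w$ are pairwise distinct, and each edge of $T$ is a single arc from the endpoint of smaller degree to the endpoint of larger degree.

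The second step orders the degrees. Relabel so that $d_a<d_b<d_c$. Every edge of $T$ is present in $\Phi(S)$, since $T$ is a triangle, so all three $\sigma$'s are positive. By the first step the arcs of $\vec{T}$ are exactly $(a,b)$, $(b,c)$ and $(a,c)$. This is a transitive tournament, which is $\Delta_0$ up to labeling.

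There is no real obstacle. The only point needing care is the first step, namely that equal degrees force the multiplicity to be a square. It is also worth remarking that the hypothesis cannot be weakened to "not a square" with $0$ allowed, because the edges of $T$ have positive multiplicity anyway. Since the ordering of three distinct reals is total, the result can never be the cyclic orientation, so no case analysis over \Cref{triangulos.permitidos} is needed beyond excluding undirected (double) edges.
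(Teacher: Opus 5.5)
Your proof is correct and rests on the same key observation as the paper's: an edge of $T$ carrying arcs in both directions forces $d_u=d_v$, hence $\sigma_{uv}=(d_u-\eta_{uv})^2$ is a positive square. The paper argues by contrapositive, citing the list of permitted triangles to say that any $\vec{T}\neq\Delta_0$ has such a double arc; you instead obtain the transitive orientation directly from the strict ordering $d_a<d_b<d_c$, so you do not need that classification.
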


\begin{proof}
	If $T$ is a triangle and $\vec{T}\neq\Delta_0$, then $\vec{T}$ must have arcs of the form $xy$ and $yx$, for some $x,y\in V(T)$ (recall \Cref{triangulos.permitidos}). Then, $\sigma_{xy}>0$ and $d_x=d_y$. Consequently, $\sigma_{xy}=(d_x-\eta_{xy})^2$. 
\end{proof}

\begin{corollary}
	\label{triang.n-simple&n-nonsquare.implica.Delta.prop}
	Let $\Phi(S)$ be an $n$-simple triangle. If $n$ is not a square, then $n\in\mathbb{N}(\Delta)$. 
\end{corollary}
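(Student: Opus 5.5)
The plan is to combine \Cref{S.squarefree.implica.triang_tipo0} with part (2) of \Cref{triang.n-simple&tipo0.implica.Delta.prop}. Let $T=\Phi(S)$ be the $n$-simple triangle, so $\sigma_{uv}=n$ for every edge $uv\in E(T)$.

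First, since $n$ is not a square, no edge multiplicity $\sigma_{uv}$ of $T$ is a positive square. Hence the hypothesis of \Cref{S.squarefree.implica.triang_tipo0} holds for every edge of $T$, and the lemma gives $\vec{T}=\Delta_0$. In other words, $T$ is a triangle of type 0.

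Second, $\Phi(S)$ is now an $n$-simple triangle of type 0. Part (2) of \Cref{triang.n-simple&tipo0.implica.Delta.prop} therefore applies directly and yields $n\in\mathbb{N}(\Delta)$.

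The only point that deserves care is checking that \Cref{S.squarefree.implica.triang_tipo0} really covers every non-$\Delta_0$ configuration. Every triangle of $\vec{\Phi}$ is one of the four types in \Cref{triangulos.permitidos}, and each type other than $\Delta_0$ contains an undirected edge, that is, a pair of opposite arcs. A pair of opposite arcs forces $d_x=d_y$, and then $\sigma_{xy}=(d_x-\eta_{xy})^2$ is a perfect square, contradicting that $n$ is not a square. This is exactly the argument already used in the proof of the lemma, so I do not expect any real obstacle.
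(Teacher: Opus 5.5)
Your proof is correct and follows the paper's own argument: apply \Cref{S.squarefree.implica.triang_tipo0} to get that the triangle is of type 0, then invoke \Cref{triang.n-simple&tipo0.implica.Delta.prop}(2). Your extra check that every non-$\Delta_0$ orientation contains a bidirected edge, hence an equal-degree pair and a square multiplicity, is exactly the reasoning inside that lemma.
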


\begin{proof}
	 By \Cref{S.squarefree.implica.triang_tipo0}, $\Phi$ must be of type 0. Then, by \Cref{triang.n-simple&tipo0.implica.Delta.prop}(2), we conclude that $n\in\mathbb{N}(\Delta)$.
\end{proof}

\begin{corollary}
	\label{|d_u-d_v|<=sigma_uv-1}
	If $\sigma_{uv}\neq 0$, then $|d_u-d_v|\leq\sigma_{uv}-1$.
\end{corollary}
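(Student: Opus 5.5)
We need to prove that if $\sigma_{uv}\neq 0$ then $|d_u-d_v|\le \sigma_{uv}-1$. The plan is to invoke \Cref{triang.n-simple&tipo0.implica.Delta.prop}(1), which gives $|d_u-d_v|\in D^*_{\sigma_{uv}}$. It therefore suffices to bound every element of $D^*_m$ by $m-1$, where $m=\sigma_{uv}\ge 1$.

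The key steps are as follows. First, by the definition of $D^*_m$, write $|d_u-d_v|=|a-b|$ with $a,b\in D_m$ and $ab=m$. Concretely, $a=d_u-\eta_{uv}$ and $b=d_v-\eta_{uv}$; both are positive, since $\sigma_{uv}>0$ and each is at least $0$ because $\eta_{uv}\le\min(d_u,d_v)$. Second, assume without loss of generality that $a\ge b\ge 1$. Then
\[
|a-b| = a-b \le a-1 \le ab-1 = m-1,
\]
where the first inequality uses $b\ge 1$ and the second uses $a\le ab$ because $b\ge 1$. Equivalently, one can cite \Cref{cotasup.D_n^*}, which gives $\max(D^*_m)=m-1$ via $m/x-x\le m/1-1$ for divisors $x\le\sqrt m$.

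I expect no real obstacle here. The only point requiring care is justifying that both factors $d_u-\eta_{uv}$ and $d_v-\eta_{uv}$ are positive integers, so that they genuinely are complementary divisors of $m$; this follows from their product being the positive number $\sigma_{uv}$ and each factor being nonnegative.
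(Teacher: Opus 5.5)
Your proposal is correct and matches the paper's proof: the paper likewise applies \Cref{triang.n-simple&tipo0.implica.Delta.prop}(1) and then uses $\max(D^*_n)=n-1$. Your positivity check on $d_u-\eta_{uv}$ and $d_v-\eta_{uv}$, and the chain $a-b\le a-1\le ab-1$, simply make explicit what the paper takes as immediate.
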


\begin{proof}
	Since $\max(D_n^*)= n-1$ for every $n\in\mathbb{N}$, the claim follows immediately from \Cref{triang.n-simple&tipo0.implica.Delta.prop}(1).
\end{proof}

\begin{corollary}
	If $\Phi(S)$ is $n$-simple, then 
	\begin{equation}
		\label{eq16}
		\{|d_u-d_v|: \sigma_{uv}\neq 0\}\subset D_n^*.
	\end{equation}
	In particular, if $\Phi(S)$ has two adjacent vertices with the same degree in $S$, then $n$ is a perfect square. 
\end{corollary}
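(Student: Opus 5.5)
The first claim follows directly from \Cref{triang.n-simple&tipo0.implica.Delta.prop}(1). Suppose $\Phi(S)$ is $n$-simple and $u,v$ are vertices with $\sigma_{uv}\neq 0$. By $n$-simplicity, every edge of $\Phi(S)$ has multiplicity $n$, so $\sigma_{uv}=n$. Part (1) of that theorem then gives $|d_u-d_v|\in D^*_{\sigma_{uv}}=D^*_n$. Since $u,v$ were arbitrary with $\sigma_{uv}\neq 0$, this proves the inclusion \eqref{eq16}.

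For the second claim, let $u,v$ be adjacent in $\Phi(S)$, so $\sigma_{uv}>0$ and hence $\sigma_{uv}=n$, and assume $d_u=d_v$. I would then argue in either of two ways. The first uses \eqref{eq16}: it gives $0=|d_u-d_v|\in D^*_n$. By definition this means there are complementary divisors $a,b$ of $n$ with $ab=n$ and $|a-b|=0$, so $a=b$ and $n=a^2$. The second argument repeats the computation in the proof of \Cref{S.squarefree.implica.triang_tipo0}: the formula for the multiplicity gives $n=\sigma_{uv}=(d_u-\eta_{uv})(d_v-\eta_{uv})=(d_u-\eta_{uv})^2$. This is a positive square because $n>0$.

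There is no real obstacle here. The only point needing care is that $n$-simplicity forces every nonzero multiplicity to equal $n$, so that $D^*_{\sigma_{uv}}$ can be replaced by $D^*_n$ uniformly over all edges.
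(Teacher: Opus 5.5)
Your proof is correct and follows the paper's argument. The inclusion comes directly from part (1) of the theorem giving $|d_u-d_v|\in D^*_{\sigma_{uv}}$, using $\sigma_{uv}=n$, and the square claim comes from $0\in D_n^*$ forcing complementary divisors $a=b$. Your alternative via $n=\sigma_{uv}=(d_u-\eta_{uv})^2$ is also valid; it is just the computation from the lemma on type~0 triangles.
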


\begin{proof}
	The inclusion \eqref{eq16} is a direct consequence of \Cref{triang.n-simple&tipo0.implica.Delta.prop}(1). If $\Phi$ has two adjacent vertices with the same degree in $S$, then by \eqref{eq16}, we have $0\in D_n^*$, which is equivalent to $n$ being a square. 
\end{proof}

\subsection{From Numbers to Graphs}

A split graph $(S,K,I)$ is said to be \textit{balanced} if $|K|=\omega(S)$ and $|I|=\alpha(S)$, where $\omega(S)$ is the \textit{clique number} of $S$ and $\alpha(S)$ is the \textit{independence number} of $S$ (see \cite{cheng2016split}). Otherwise, we say that $S$ is \textit{unbalanced}. $S$ is unbalanced if and only if $(|K|,|I|)\in\{(\omega(S),\alpha(S)-1), (\omega(S)-1,\alpha(S))\}$ (\cite{cheng2016split}, Theorem 7). To prove the following lemma, we use some notations, concepts and results about unbalanced split graphs from Section 4 of \cite{jaume2025nullspace} (in particular, about \textit{swing} vertices).

\begin{lemma}
	\label{phi.sin.aislados.implica.balanced}
	Let $(S,K,I)$ be a split graph such that $K=\bigcup_{v\in I}N_v$. If $\Phi(S)$ has no isolated vertices, then $S$ is balanced.
\end{lemma}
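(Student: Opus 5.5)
We argue by contradiction: assume $S$ is unbalanced and exhibit an isolated vertex of $\Phi(S)$. By the cited characterization (\cite{cheng2016split}, Theorem 7), unbalance means either $|K|=\omega(S)-1$ or $|I|=\alpha(S)-1$. In each case we identify the \emph{swing} vertex of Section 4 of \cite{jaume2025nullspace} and show that it, or a vertex forced by it, has $\sigma_{uv}=0$ for every other $u\in I$.

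\textbf{Case $|K|=\omega(S)-1$.} Some $w\in I$ is adjacent to every vertex of $K$, so $N_w=K$ and $d_w=|K|$. For any $u\in I-\{w\}$ we have $N_u\subseteq K=N_w$, hence $\eta_{uw}=d_u$. Then
\[
\sigma_{uw}=(d_u-\eta_{uw})(d_w-\eta_{uw})=0 .
\]
Thus $w$ is isolated in $\Phi(S)$, contradicting the hypothesis. (When $|I|=1$, the single vertex is trivially isolated.)

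\textbf{Case $|I|=\alpha(S)-1$.} Some $k\in K$ has no neighbour in $I$; the swing vertex lies in $K$ and could be moved to $I$. This contradicts $K=\bigcup_{v\in I}N_v$, since $k$ belongs to no $N_v$.

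Hence both unbalanced configurations are impossible, and $S$ is balanced.

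The main point needing care is matching the two unbalanced shapes of Theorem 7 with the swing-vertex description in \cite{jaume2025nullspace}. Concretely, we must check that $|K|=\omega-1$ forces a vertex of $I$ complete to $K$, and that $|I|=\alpha-1$ forces a vertex of $K$ anticomplete to $I$. Both follow directly from maximality: any clique of size $|K|+1$ meets $I$ in exactly one vertex, which must then be adjacent to all of $K$, and symmetrically for independent sets. Everything else is the one-line computation of $\sigma$ above.
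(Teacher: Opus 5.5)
Your proof is correct and follows the paper's argument. In the case $|K|=\omega(S)-1$ you produce $w\in I$ with $N_w=K$, which is isolated in $\Phi(S)$ because $\eta_{uw}=d_u$ for every $u\in I-w$; in the case $|I|=\alpha(S)-1$ you produce a vertex of $K$ with no neighbour in $I$, contradicting $K=\bigcup_{v\in I}N_v$. The only difference is that you obtain these vertices directly by counting how a maximum clique (resp.\ maximum independent set) meets $K$ and $I$. The paper instead cites Corollary 4.8 and Theorem 4.12(1) of \cite{jaume2025nullspace} on swing vertices for the first case, so your version is self-contained.
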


\begin{proof}
	Suppose, for contradiction, that $S$ is unbalanced. Then, $|I|\in\{\alpha(S),$ $\alpha(S)-1\}$. Let $W(S)$ be the set of swing vertices of $S$.
	
	If $|I|=\alpha(S)$, then $|K|=\omega(S)-1$. Since $I$ is a maximum independent set of an unbalanced split graph, $I\cap W(S)\neq\varnothing$ by Corollary 4.8 of \cite{jaume2025nullspace}. Pick $w\in W(S)\cap I$. By Theorem 4.12(1) of \cite{jaume2025nullspace}, $d_w=\omega(S)-1=|K|$, and since $N_w\subset K$, this forces $N_w=K$. For every $v\in I-w$, $N_v\subset K=N_w$ gives $\eta_{vw}=d_v$, hence $\sigma_{vw}=0$. Thus, $w$ is isolated in $\Phi(S)$, contradicting the hypothesis.
	
	If $|I|=\alpha(S)-1$, then $|K|=\omega(S)$, and any maximum independent set has the form $I\cup x$ for some $x\in K$. Such $x$ is not adjacent to any vertex of $I$, so $x\notin\bigcup_{v\in I}N_v=K$, a contradiction.
\end{proof}

A vertex $v$ of a graph $G$ is said to be \textit{active} if $v$ is involved in some 2-switch on $G$ (see \cite{vnsigma.2switch.degree}). Otherwise, $v$ is \textit{inactive}. We say that $G$ is active if all of its vertices are active. We say that $G$ is \textit{decomposable} (with respect to the Tyshkevich composition $\circ$, see \cite{tyshkevich2000decomposition}) if $G=S\circ H$ for some split graph $S$ and graph $H$, both with at least one vertex. Otherwise, $G$ is \textit{indecomposable}. Every nontrivial indecomposable graph is active. $G=S\circ H$ is active if and only if $S$ and $H$ are active. An active split graph is indecomposable if and only if $\Phi(S)$ is connected (see \cite{vnsigma.factor.graph}).

\begin{lemma}[\cite{vnsigma.factor.graph}]
	\label{Phi.completo.implica.inactivos.universales}
	Let $(S,K,I)$ be a balanced split graph such that $|I|\geq 2$ and $\Phi(S)$ is complete. If $x$ is an inactive vertex in $S$, then $x$ is universal (i.e., $N_x=V(S)-x$).
\end{lemma}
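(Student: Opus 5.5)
We split into cases according to which side of the partition the inactive vertex $x$ lies on. In each case we build an explicit 2-switch through $x$. Throughout we use only three facts: the formula $\sigma_{uv}=(d_u-\eta_{uv})(d_v-\eta_{uv})$, the completeness of $\Phi(S)$ (so $\sigma_{uv}>0$ for all distinct $u,v\in I$), and the balance of $S$.

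\emph{Case $x\in I$.} Since $|I|\geq 2$, there is some $v\in I-x$. Completeness of $\Phi(S)$ gives $\sigma_{xv}>0$. By definition, $\sigma_{xv}$ counts 2-switches acting simultaneously on $x$ and $v$, so at least one 2-switch involves $x$. This contradicts the inactivity of $x$, so this case cannot occur.

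\emph{Case $x\in K$.} Now $x$ is adjacent to every other vertex of $K$, so being universal amounts to $I\subset N_x$. Suppose instead that some $w\in I$ satisfies $xw\notin E(S)$. We look for a 2-switch replacing the edges $xv$ and $wy$ by $xw$ and $vy$. For this we need four distinct vertices with the following properties:
\begin{itemize}
\item[(i)] $v\in I\cap N_x$;
\item[(ii)] $y\in N_w$ (so $y\in K$);
\item[(iii)] $vy\notin E(S)$.
\end{itemize}
The vertices are automatically distinct: $v\neq w$ because $v\in N_x$ and $w\notin N_x$, $y\neq x$ because $w\notin N_x$, and $v,w\in I$ while $x,y\in K$. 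Such a 2-switch involves $x$, which contradicts inactivity. So it suffices to show that $v$ and $y$ exist.

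\emph{Existence of $v$.} If $N_x\cap I=\varnothing$, then $I\cup\{x\}$ is an independent set of size $|I|+1$. Hence $\alpha(S)>|I|$, contradicting that $S$ is balanced. So $N_x\cap I\neq\varnothing$.

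\emph{Existence of $y$.} We need some $v\in N_x\cap I$ with $N_w\not\subset N_v$. Suppose instead that $N_w\subset N_v$ for every $v\in N_x\cap I$. Fix any such $v$. Then $\eta_{wv}=|N_w\cap N_v|=d_w$, so $\sigma_{wv}=(d_w-d_w)(d_v-d_w)=0$. Since $v$ and $w$ are distinct vertices of $I$, this contradicts completeness of $\Phi(S)$. Therefore we can pick $y\in N_w-N_v$, which gives (ii) and (iii).

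This construction contradicts the assumption that $x$ is inactive, so $I\subset N_x$ and $x$ is universal.

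The only delicate step is choosing $v$ and $y$ so that the 2-switch really involves $x$. This is exactly where the two hypotheses enter. Balance supplies a neighbour $v$ of $x$ inside $I$, and completeness of $\Phi(S)$ (through $\sigma_{wv}\neq 0$) supplies a neighbour $y$ of $w$ that is not a neighbour of $v$.
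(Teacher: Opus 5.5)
Your proof is correct. Note that the paper gives no proof of this lemma; it imports it from the earlier factor-graph reference. So there is no in-text argument to compare against, and your direct construction serves as a complete, self-contained verification.

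The case $x\in I$ follows at once from $\sigma_{xv}>0$. In the case $x\in K$, the 2-switch replacing $xv,wy$ by $xw,vy$ is legitimate:
\begin{itemize}
\item the four vertices are distinct;
\item $y\in K$, because $N_w\subset K$ as $I$ is independent;
\item both required non-edges $xw$ and $vy$ hold.
\end{itemize}

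A natural alternative would cite the standard characterization: a clique vertex is inactive if and only if its $I$-neighbourhood is comparable under inclusion with that of every other clique vertex. Your argument avoids that characterization by exhibiting the 2-switch explicitly. This has the advantage of showing exactly where each hypothesis is used, and that both could be weakened:
\begin{itemize}
\item balance is used only to get $N_x\cap I\neq\varnothing$;
\item completeness of $\Phi(S)$ is used only to get $\sigma_{wv}>0$ for a single $v\in N_x\cap I$, and, when $x\in I$, to ensure $x$ is not isolated in $\Phi(S)$.
\end{itemize}

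One cosmetic point. When proving that $y$ exists, you negate a universally quantified statement when you don't need to. For any single $v\in N_x\cap I$, completeness gives $\sigma_{wv}>0$, hence $d_w>\eta_{wv}$, i.e.\ $N_w\not\subset N_v$, and you can take $y\in N_w-N_v$ directly.
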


%\begin{proposition}
%	\label{split.caract.vert.activos}
%	If $ (S, K, I) $ is a split graph, then the following holds:
%	\begin{enumerate}
%		\item $ U = \bigcap_{v \in I} N_S(v) $ is the set of universal vertices of $ S $;
%		\item $ u \in I $ is active in $ S $ if and only if there exists an $ x \in I $ such that $ N_S(x) - N_S(u) \neq \varnothing $ and $ N_S(u) - N_S(x) \neq \varnothing $;
%		\item $ u \in I $ is inactive in $ S $ if and only if, for every $ v \in I $, either $ N_S(v) \subset N_S(u) $ or $ N_S(u) \subset N_S(v) $;
%		\item $ u \in K $ is inactive in $ S $ if and only if, for every $ v \in K $, either $ N_S(v) \cap I \subset N_S(u) \cap I $ or $ N_S(u) \cap I \subset N_S(v) \cap I $; 
%		\item A vertex of $ S $ is inactive in $ S $ if and only if its neighborhood is comparable by inclusion with all other neighborhoods in its partition;
%		\item If $ w $ is a swing vertex in the split graph $ (S, K, I) $, then $ w \notin act(S) $.
%		\item If $ S $ is active, then $ S $ is balanced.
%	\end{enumerate}
%\end{proposition}

If \Cref{triang.n-simple&tipo0.implica.Delta.prop} can be thought of as a bridge from Graphs to Numbers, the following result goes in the opposite direction, that is, from Number Theory back to Graph Theory. For this reason, we consider it another key theorem of this section.

\begin{theorem}
	\label{n.con.propDelta.implica.existencia.de.T_0.n-simple}
	Let $(x,y,z)$ be a $\Delta$-triple for $n$. For each integer $k\ge z$, there exists a balanced split graph $(S,K,I)$ with $I=\{a,b,c\}$, such that $d_a=k$ and $\Phi(S)$ is an $n$-simple triangle of type 0 (see \Cref{triangulos.permitidos}). The graph $S$ satisfies:
	\begin{enumerate}
		\item $d_b=d_a+\frac{n}{z}-z$ and $d_c=d_a+\frac{n}{x}-x$;
		
		\item $\eta_{ab}=d_a-z$, $\eta_{bc}=d_a+\frac{n}{z}-z-y>0$, and $\eta_{ac}=d_a-x>0$;
		
		\item $\eta_{abc}=|N_a\cap N_b\cap N_c|\geq\max\{0,\,d_a-x-z\}$;
		
		\item $|K|=\frac{n}{x}+z+y+\eta_{abc}$;
		
		\item if $d_a=z$, then $S$ is active and indecomposable;
		
		\item if $S$ is active, then $S$ is indecomposable and $d_a\leq x+z$.
	\end{enumerate}
\end{theorem}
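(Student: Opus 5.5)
The plan is to build $S$ directly from a Venn-diagram description of the neighborhoods $N_a,N_b,N_c\subseteq K$, with $K$ a clique. We prescribe the degrees and pairwise intersections as in items (1)–(2) and then check that the formula $\sigma_{uv}=(d_u-\eta_{uv})(d_v-\eta_{uv})$ gives $n$ on every edge. Put $k=d_a$, $d_b=k+\frac nz-z$, $d_c=k+\frac nx-x$, $\eta_{ab}=k-z$, $\eta_{ac}=k-x$, $\eta_{bc}=d_b-y$. The multiplicities then come out as follows:
\begin{itemize}
\item $\sigma_{ab}=z\cdot\frac nz=n$;
\item $\sigma_{ac}=x\cdot\frac nx=n$;
\item $\sigma_{bc}=y\,(d_c-d_b+y)$, and by \eqref{ecuacion.Delta.prop} we have $d_c-d_b=\frac ny-y$, so $\sigma_{bc}=y\cdot\frac ny=n$.
\end{itemize}
This identity for $\sigma_{bc}$ is the only place the $\Delta$-triple hypothesis enters. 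By \Cref{cotasup.D_n^*} and \Cref{Delta.prop.caracterizacion} we have $0<\frac nz-z<\frac nx-x$. Hence $d_a<d_b<d_c$, so $\vec\Phi(S)=\Delta_0$.

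The main step is realizability: we need seven nonnegative region sizes with the prescribed $d$'s and $\eta$'s. Let $t=\eta_{abc}$ be a free parameter. Inclusion–exclusion gives the region sizes:
\begin{itemize}
\item only $a$: $x+z-k+t$;
\item only $b$: $y+z-k+t$;
\item only $c$: $\frac nx-\frac nz+y+z-k+t$;
\item $ab$ only: $k-z-t$;
\item $ac$ only: $k-x-t$;
\item $bc$ only: $k+\frac nz-z-y-t$.
\end{itemize}
We choose $t=\max\{0,k-x-z\}$. Then $k\ge z>y>x$ makes $k-z-t\ge0$, and the remaining sizes are nonnegative using $y\le z$, $\frac nz\ge z\ge y$ and $\frac nx>\frac nz$. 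This yields (1)–(3), and also the positivity of $\eta_{bc}$ and $\eta_{ac}$. Summing the regions gives $|K|=\frac nx+y+z+t$, which is (4), provided we take $K=N_a\cup N_b\cup N_c$. Since every $\sigma$ equals $n>0$, $\Phi(S)$ has no isolated vertices, and \Cref{phi.sin.aislados.implica.balanced} yields that $S$ is balanced.

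For (5) and (6), vertices of $I$ are active because every $\sigma_{uv}>0$. A vertex of $K$ is active exactly when its $I$-neighborhood is incomparable with some other $I$-neighborhood of a vertex in $K$ (a 2-switch needs crossing neighborhoods). If $d_a=z$, then $t=0$ and the $ab$-only region is empty. The occurring $I$-neighborhoods are $\{a\},\{b\},\{c\},\{a,c\},\{b,c\}$, the last two possibly absent if their sizes $z-x$ and $\frac nz-y$ vanish; $\{a\},\{b\},\{c\}$ are certainly nonempty. Each of these sets has an incomparable partner among the others: $\{a\}$ with $\{b\}$, $\{a,c\}$ with $\{b\}$, and $\{b,c\}$ with $\{a\}$. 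Hence $S$ is active.

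Conversely, a vertex in the triple region has $I$-neighborhood $\{a,b,c\}$, which is comparable with everything, so it is inactive; this is consistent with \Cref{Phi.completo.implica.inactivos.universales}. Therefore activity forces $t=0$, and $t\ge k-x-z$ then gives $d_a\le x+z$. Finally, an active split graph whose $\Phi(S)$ is connected (here, a triangle) is indecomposable, which completes (5) and (6). The delicate point throughout is verifying the nonnegativity of the "only $c$" and "$bc$ only" regions; these rely on $\frac nz\ge z\ge y$, which follows from $z<\sqrt n$.
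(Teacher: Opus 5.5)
Your proof is correct and follows the paper's construction: you assign the complementary pairs $(z,\frac nz)$, $(y,\frac ny)$, $(x,\frac nx)$ to the edges $ab$, $bc$, $ac$, obtain balancedness from \Cref{phi.sin.aislados.implica.balanced}, and use the same universal-vertex argument for (6). The differences are only in detail: your explicit Venn-region sizes with $t=\max\{0,k-x-z\}$ supply the realizability check that the paper leaves implicit, and your incomparability argument for (5) replaces the paper's appeal to \Cref{Phi.completo.implica.inactivos.universales} (the stray ``$z>y$'' should read $z\ge y$, but nothing in your argument depends on it).
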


\begin{proof}
	We construct a split graph $(S,K,I)$ with $K=N_a\cup N_b\cup N_c$. Since $K=\bigcup_{v\in I}N_v$ and since $\Phi(S)$ has no isolated vertices, we have that $S$ is balanced by  \Cref{phi.sin.aislados.implica.balanced}. For $S$ to have the required properties it suffices to express $d_a,d_b,d_c,\eta_{ab},\eta_{bc}$ and $\eta_{ac}$ in terms of $n,x,y$ and $z$. For each $\{u,v\}\subset I$, we have $n=\sigma_{uv}=(d_u-\eta_{uv})(d_v-\eta_{uv})$, so $d_u-\eta_{uv}$ and $d_v-\eta_{uv}$ are complementary divisors of $n$. We set
	\begin{align*}
		d_a-\eta_{ab}&=z, & d_b-\eta_{ab}&=n/z,\\
		d_b-\eta_{bc}&=y, & d_c-\eta_{bc}&=n/y,\\
		d_a-\eta_{ac}&=x, & d_c-\eta_{ac}&=n/x.
	\end{align*}
	These relations make $S$ $n$-simple by construction and, treating $d_a$ as a free parameter, we solve for the remaining quantities, obtaining (1) and (2). In particular, $d_a<d_b<d_c$, so $\vec{\Phi}(S)=\Delta_0$ provided $d_a>0$, which is guaranteed by $d_a-z=\eta_{ab}\ge 0$.
	
	\begin{enumerate}[(1).]
		\item Immediate from the relations above.
		
		\item The identities immediately follows from the relations above. Since $d_a\geq z>x$, we have $\eta_{ac}>0$. On the other hand, notice that $y,z<\sqrt{n}$ implies $yz<n$, i.e., $\frac{n}{z}-y>0$. Therefore, $d_a\ge z>z-(\frac{n}{z}-y)$, which means $\eta_{bc}>0$.
		
		\item Combining (2) with the Inclusion-Exclusion Principle for two sets and some basic set identities, we deduce:
			\[
			d_a\ge |N_a\cap(N_b\cup N_c)|=|(N_a\cap N_b)\cup(N_a\cap N_c)|= 
			\]	
			\[
			\eta_{ab}+\eta_{ac}-\eta_{abc}=2d_a-x-z-\eta_{abc}.
			\]
			Consequently, $\eta_{abc}\geq d_a-x-z.$

		\item It follows by combining (1) and (2) with the Inclusion-Exclusion Principle for three sets. 
		
		\item If $d_a=z$, then $\eta_{ab}=0$ by (2), and hence $\eta_{abc}=0$. Therefore, $S$ is active by  \Cref{Phi.completo.implica.inactivos.universales}. Since $\Phi(S)$ is connected, $S$ is indecomposable by Theorem 2.7 in \cite{vnsigma.factor.graph}.
		
		\item If $S$ is active, it has no universal vertices (which are always inactive), so $\eta_{abc}=0$ (see \cite{vnsigma.2switch.degree})
		%by  \Cref{split.caract.vert.activos}. 
		If $d_a>x+z$, then, by (3), $\eta_{abc}\geq 1$, i.e. there is a universal vertex in $K$, contradicting that $S$ is active. Hence $d_a\leq x+z$. Since $\Phi(S)$ is connected, $S$ is indecomposable by Theorem 2.7 in \cite{vnsigma.factor.graph}. \qedhere
	\end{enumerate}
\end{proof}

Let us apply \Cref{n.con.propDelta.implica.existencia.de.T_0.n-simple}
to the $\Delta$-triple $(2,3,3)$ for $n=24$, taking $d_a=3$. We
obtain an indecomposable split graph $S$ with $|K|=18$, $d_b=8$, $d_c=13$, $\eta_{ab}=0$, $\eta_{bc}=5$ and $\eta_{ac}=1$. The following neighborhoods are compatible with these parameters:
if $K=[18]$, define $N_a=[3], N_b=\{4,\ldots,11\}$, and $N_c=\{3,\ldots,8,12,\ldots,18\}$. Thus, $N_a\cap N_b=\varnothing, N_b\cap N_c=\{4,\ldots,8\}$, and $N_a\cap N_c=\{3\}$. It can be easily checked that $\Phi(S)$ is a $24$-simple triangle by direct computation of the   $\sigma_{uv}$'s.
%Indeed, $\sigma_{ab}=(3-0)(8-0)=24$, $\sigma_{bc}=(8-5)(13-5)=24$, and $\sigma_{ac}=(3-1)(13-1)=24$. 
In  \Cref{split.24simple.Delta.tipo0}, we show a simplified version of the split graph $S$ we have just constructed, omitting all edges between clique vertices (in black).

\begin{figure}[ht]
	\centering
	\begin{tikzpicture}[scale=0.5, every node/.style={circle, draw, minimum size=0.25cm,inner sep=1pt}]
		% Coordinates for independent nodes
		\coordinate (center) at (0,0);
		
		% Nodes from {1,...,18} on a horizontal axis
		\foreach \i [count=\j from -9] in {1,...,18} {
			\node[fill=black] (K\i) at (\j*1.2, 0) {};
		}
		
		% Independent nodes: a, b, c
		\node[label=right:{$a$}] (a) at (-7, 4) {};
		\node[label=right:{$b$}] (b) at (0, 4) {};
		\node[label=below:{$c$}] (c) at (0, -4) {};
		
		% Neighborhood of "a"
		\foreach \i in {1,2,3} {
			\draw (a) -- (K\i);
		}
		
		% Neighborhood of "b"
		\foreach \i in {4,5,6,7,8,9,10,11} {
			\draw (b) -- (K\i);
		}
		
		% Neighborhood of "c"
		\foreach \i in {3,4,5,6,7,8,12,13,14,15,16,17,18} {
			\draw (c) -- (K\i);
		}
	\end{tikzpicture}
	\caption{Application of  \Cref{n.con.propDelta.implica.existencia.de.T_0.n-simple} to the case $n=24$.}
	\label{split.24simple.Delta.tipo0}
\end{figure}

%As we have just seen, all parameters in  \Cref{n.con.propDelta.implica.existencia.de.T_0.n-simple} defining $S$ depend on $d_a$, the degree of vertex $a$ in $S$. Indeed, according to that theorem, it is enough to fix any value $d_a\geq z$ to obtain essentially different balanced split graphs for different choices of $d_a$. For $d_a=z$, \Cref{n.con.propDelta.implica.existencia.de.T_0.n-simple}(5) ensures that $S$ is active. However, \Cref{n.con.propDelta.implica.existencia.de.T_0.n-simple}(6) restricts the number of active graphs we can construct to a finite quantity, since $S$ will have universal vertices if $d_a>x+z$. These observations are summarized in the following corollary.

\begin{corollary}
	\label{cor_finite_number_active_split}
	For each $n\in\mathbb{N}(\Delta)$:
	\begin{enumerate}
		\item there exists an infinite number of balanced split graphs $S$ such that $\Phi(S)$ is an $n$-simple triangle of type 0;
		\item there exists a finite nonzero number of active and indecomposable split graphs $S$ such that $\Phi(S)$ is an $n$-simple triangle of type 0.
	\end{enumerate}
\end{corollary}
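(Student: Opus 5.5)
Both parts will be read off \Cref{n.con.propDelta.implica.existencia.de.T_0.n-simple}, combined with the finiteness of $\Delta$-triples for a fixed $n$. Fix $n\in\mathbb{N}(\Delta)$. By \Cref{Delta.prop.caracterizacion}, $n$ has at least one $\Delta$-triple $(x,y,z)$.

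For part (1), I would apply \Cref{n.con.propDelta.implica.existencia.de.T_0.n-simple} to this triple for every integer $k\ge z$. This gives a balanced split graph $S_k$ with $I=\{a,b,c\}$, $d_a=k$, and $\Phi(S_k)$ an $n$-simple triangle of type 0. To show these graphs are pairwise non-isomorphic, I would use item (4): $|K|=\frac{n}{x}+z+y+\eta_{abc}$. This alone is not enough, because $\eta_{abc}$ need not grow with $k$. A cleaner invariant is the degree sequence. In $S_k$ the independent vertices have degrees $k$, $k+\frac{n}{z}-z$ and $k+\frac{n}{x}-x$. The clique vertices have degree at least $|K|-1$. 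Since $S_k$ is balanced, the partition $(K,I)$ is determined up to swing vertices, and by \Cref{phi.sin.aislados.implica.balanced} balanced split graphs have a unique partition. So the multiset of degrees of $I$ is an isomorphism invariant, and it already distinguishes different $k$. Alternatively, the total number of vertices $|K|+3$ grows once $k$ is large, since $|K|\ge d_c=k+\frac{n}{x}-x$. Either argument gives infinitely many non-isomorphic graphs.

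For part (2), existence comes from item (5): taking $d_a=z$ gives an active, indecomposable $S$ with the required $\Phi(S)$. Finiteness is the substantive step. Let $S$ be any active indecomposable split graph with $\Phi(S)$ an $n$-simple triangle of type 0, labelled $a,b,c$ so that $d_a<d_b<d_c$. The key move is to reverse the construction, as in the proof of \Cref{triang.n-simple&tipo0.implica.Delta.prop}. Set $z=d_a-\eta_{ab}$, $y=d_b-\eta_{bc}$ and $x=d_a-\eta_{ac}$. These are divisors of $n$ with complementary divisors $d_b-\eta_{ab}$, $d_c-\eta_{bc}$ and $d_c-\eta_{ac}$, and by the same computation $(x,y,z)$ (reordered if needed) is a $\Delta$-triple for $n$. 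Since $n$ has finitely many divisors, there are finitely many such triples. For each triple, activity forces $\eta_{abc}=0$ and, by the argument of item (6), $d_a\le x+z$, so $d_a$ ranges over a finite set. Once $d_a$ is fixed, items (1), (2) and (4) determine $d_b$, $d_c$, all pairwise intersections, $\eta_{abc}=0$ and $|K|$.

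It remains to check that $S$ is determined up to isomorphism by these numbers. Indecomposability gives $K=N_a\cup N_b\cup N_c$, since a clique vertex outside all $N_v$ would be inactive. With $I=\{a,b,c\}$, the graph is then fixed up to isomorphism by the Venn-region sizes of $N_a,N_b,N_c$ inside $K$, and these are determined by $d$'s, $\eta$'s and $\eta_{abc}$. Hence there are only finitely many such $S$. I expect the main obstacle to be this reverse step: matching an arbitrary active $S$ to a $\Delta$-triple with the ordering $z\ge y$ and the constraints of \Cref{n.con.propDelta.implica.existencia.de.T_0.n-simple}. In particular, the labels $x,y,z$ may need to be swapped with their complements or reordered, and I would handle this by the finiteness of the divisor set rather than by exact matching.
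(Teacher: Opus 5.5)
Your proposal is correct and takes the paper's route. Both parts are read off \Cref{n.con.propDelta.implica.existencia.de.T_0.n-simple}: item (5) gives existence and item (6) bounds $d_a\le x+z$ for active graphs, while your reverse matching of an arbitrary active $S$ to a $\Delta$-triple, together with $S$ being determined by the Venn-region sizes once $K=N_a\cup N_b\cup N_c$, spells out what the paper's one-line proof leaves implicit. One small slip: uniqueness of the partition of a balanced split graph is a standard fact but is not what \Cref{phi.sin.aislados.implica.balanced} says, and you do not need it, because your bound $|K|\ge d_c=k+\frac{n}{x}-x$ (or items (3)--(4), since $\eta_{abc}\ge k-x-z$ does grow with $k$) already gives infinitely many non-isomorphic graphs in part (1).
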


\begin{proof}
	It immediately follows from \Cref{n.con.propDelta.implica.existencia.de.T_0.n-simple}.
\end{proof}

%%%%%%%%%%%%%%%%%%%%%%%%%%%%%%%%%%%%%%%%%%%%%%%%%%%%%%%%%%%

\section{$\Delta$-primitive numbers}\label{sec:Delta.prim}

%In this section, we observe that numbers of the form $\alpha^2n$, with $n\in\mathbb{N}(\Delta)$ and $\alpha\in\mathbb{N}$, all have the $\Delta$ property. This fact motivates the search for those members of $\mathbb{N}(\Delta)$ that are not nontrivial square multiples of any other element in $\mathbb{N}(\Delta)$. This leads to the concept of a $\Delta$-primitive number. We see that every element of $\mathbb{N}(\Delta)$ is a square multiple of some primitive, and that the set $\mathbb{N}(\Delta^2)$, consisting of all squares with the $\Delta$ property, forms an abelian semigroup under ordinary multiplication. Subsequently, we exhibit infinite examples of $\Delta$ squares that admit more than one representation as $\alpha^2m$, where $m$ is $\Delta$-primitive. Finally, we construct a polynomial generator of numbers with the $\Delta$ property, through which we prove that there are infinitely many $\Delta$-primitives. \\

\begin{proposition}
	\label{multiplos.cuadr.prop.delta}
	If $n\in\mathbb{N}(\Delta)$, then $\alpha^2 n\in\mathbb{N}(\Delta)$ for all $\alpha\in\mathbb{N}$. In particular, every odd power of $n$ satisfies the $\Delta$ condition.
\end{proposition}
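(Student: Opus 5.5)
The plan is to scale a $\Delta$-triple. By \Cref{Delta.prop.caracterizacion}, $n\in\mathbb{N}(\Delta)$ gives a $\Delta$-triple $(x,y,z)$ for $n$. For $N=\alpha^2 n$, I will take the candidate triple $(\alpha x,\alpha y,\alpha z)$ and check that it is a $\Delta$-triple for $N$.

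\textbf{Step 1: divisibility.} Since $x\mid n$, we get $\alpha x\mid \alpha^2 n$, and likewise for $y$ and $z$. So all three components divide $N$.

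\textbf{Step 2: ordering.} From $1<x<y\le z<\sqrt{n}$ we get $1<\alpha x<\alpha y\le \alpha z<\alpha\sqrt{n}=\sqrt{N}$. Here $\alpha x>1$ because $\alpha\ge 1$ and $x>1$.

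\textbf{Step 3: the defining equation.} This is the key observation. For any divisor $w$ of $n$,
\[
\frac{\alpha^2 n}{\alpha w}-\alpha w=\alpha\Bigl(\frac{n}{w}-w\Bigr).
\]
So every term of \eqref{ecuacion.Delta.prop} is multiplied by the same factor $\alpha$, and the equation is preserved. Equivalently, \eqref{ecuacion2.Delta.prop} is homogeneous of degree $2$ in $(x,y,z)$ on the left factor and degree $3$ on the right once $n$ is scaled by $\alpha^2$, so both sides pick up the factor $\alpha^4$. By \Cref{Delta.prop.caracterizacion}, $N\in\mathbb{N}(\Delta)$.

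\textbf{Step 4: odd powers.} Write $n^{2k+1}=(n^k)^2\, n$ and apply the first part with $\alpha=n^k$.

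There is no real obstacle. The only point that needs care is the strict inequality $\alpha z<\sqrt{N}$, and it follows directly by multiplying $z<\sqrt{n}$ by $\alpha$.
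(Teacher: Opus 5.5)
Your proof is correct and matches the paper's argument: scale a $\Delta$-triple $(x,y,z)$ to $(\alpha x,\alpha y,\alpha z)$, use $\frac{\alpha^2 n}{\alpha w}-\alpha w=\alpha\bigl(\frac{n}{w}-w\bigr)$, check that the divisibility and ordering conditions carry over, and take $\alpha=n^k$ for odd powers. One small slip in your parenthetical remark: the right-hand side $xyz(z+y-x)$ of \eqref{ecuacion2.Delta.prop} is homogeneous of degree $4$ in $(x,y,z)$, not $3$, though your conclusion that both sides pick up the factor $\alpha^4$ is correct.
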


\begin{proof}
	It suffices to observe that \eqref{ecuacion.Delta.prop} can be rewritten as
	\begin{equation*}
		\frac{\alpha^2 n}{\alpha z}-\alpha z + \frac{\alpha^2 n}{\alpha y}-\alpha y =\frac{\alpha^2 n}{\alpha x}-\alpha x.
	\end{equation*}
	Since $(\alpha x,\alpha y,\alpha z)\in D_{\alpha^2n}^3$, and since $1<x<y\leq z<\sqrt{n}$ implies
	\[ 1<\alpha x<\alpha y\leq\alpha z<\alpha\sqrt{n}=\sqrt{\alpha^2n}, \]
	we conclude that $\alpha^2 n\in\mathbb{N}(\Delta)$. In particular, taking $\alpha=n^\beta$, we see that $(n^\beta)^2 n=n^{2\beta+1}\in\mathbb{N}(\Delta)$ for all $\beta\in\mathbb{N}$.
\end{proof}

Thanks to  \Cref{multiplos.cuadr.prop.delta}, we see that $|\mathbb{N}(\Delta)|=\infty$. Another immediate consequence of this proposition is that if $n\in\mathbb{N}(\Delta)$ and $n^{2\beta}\in\mathbb{N}(\Delta)$ for some $\beta\in\mathbb{N}$, then $\{n^k:k\in[2\beta,\infty)\cap\mathbb{N}\}\subset\mathbb{N}(\Delta)$. For example, both $84$ and $84^2$ have the $\Delta$ property, so all higher powers of 84 do as well.

\Cref{multiplos.cuadr.prop.delta} allows us to trivially obtain infinitely many elements of $\mathbb{N}(\Delta)$ from others. This motivates the search and study of those numbers with the $\Delta$ property that cannot be obtained in this way. We say that a natural number $n$ is $\Delta$-\textit{primitive} if $n\in\mathbb{N}(\Delta)$ and there is no pair of numbers $\alpha,m\geq 2$ such that $n=\alpha^2m$ and $m\in\mathbb{N}(\Delta)$. For example, 24 and 40 are $\Delta$-primitive numbers. On the other hand, 96 is not, since we can write it as $2^2 \cdot 24$.

%A natural number $n$ is said to be \textbf{square-free} if there is no prime number $p$ such that $p^2|n$. In other words, all primes appearing in the factorization of $n$ have exponent 1. 
Clearly, if $n\in\mathbb{N}(\Delta)$ and $n$ is square-free, then $n$ is $\Delta$-primitive. Examples of square-free numbers satisfying the $\Delta$ condition are 105 and 385. Therefore, 105 and 385 are $\Delta$-primitive. Moreover, 105 is the smallest odd number with the $\Delta$ property.

\begin{proposition}
	\label{n=alfa^2 m_m.delta.prim}
	Every $n\in\mathbb{N}(\Delta)$ is either $\Delta$-primitive or can be written as $\alpha^2m$ for some $\alpha\geq 2$ and some $\Delta$-primitive $m$.
\end{proposition}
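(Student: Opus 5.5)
The argument is a descent on $n$: minimality will produce the primitive. I would argue by strong induction on $n\in\mathbb{N}(\Delta)$; equivalently, among all representations of $n$ in the required form, pick one whose second factor is as small as possible.

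Fix $n\in\mathbb{N}(\Delta)$. If $n$ is $\Delta$-primitive, we are done. Otherwise, by the definition of $\Delta$-primitive, there exist $\alpha_1,m_1\geq 2$ with $n=\alpha_1^2m_1$ and $m_1\in\mathbb{N}(\Delta)$. Since $\alpha_1\geq 2$, we have $m_1\le n/4<n$, so the induction hypothesis applies to $m_1$. There are two cases.
\begin{enumerate}
\item If $m_1$ is $\Delta$-primitive, take $\alpha=\alpha_1$ and $m=m_1$.
\item Otherwise $m_1=\alpha_2^2 m$ with $\alpha_2\geq 2$ and $m$ $\Delta$-primitive. Then $n=(\alpha_1\alpha_2)^2 m$ with $\alpha=\alpha_1\alpha_2\geq 4\geq 2$, which is the required form.
\end{enumerate}

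For the base case: the smallest element of $\mathbb{N}(\Delta)$ (namely $24$) cannot be written as $\alpha^2m$ with $\alpha\ge2$ and $m\in\mathbb{N}(\Delta)$, since such an $m$ would be a smaller element of $\mathbb{N}(\Delta)$. Hence it is $\Delta$-primitive. The strong induction formally covers this case anyway, since the non-primitive branch never arises for it.

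The only point needing care is that the descent terminates. This holds because $m_1<n$ strictly, which follows from $\alpha_1\ge 2$; the set $\mathbb{N}(\Delta)$ itself supplies nothing beyond the definition. No earlier result is really needed, although \Cref{multiplos.cuadr.prop.delta} confirms consistency: every $\alpha^2m$ with $m$ $\Delta$-primitive does lie in $\mathbb{N}(\Delta)$. I do not expect a genuine obstacle here; the statement is essentially the well-foundedness of $(\mathbb{N},<)$.
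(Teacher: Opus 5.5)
Your proof is correct and is essentially the paper's argument. The paper iterates the non-primitive decomposition to get a strictly decreasing sequence $m_1>m_2>\cdots$ in $\mathbb{N}(\Delta)$, which must stop at a $\Delta$-primitive $m_k$ with $n=(\alpha_1\cdots\alpha_k)^2m_k$; this is the same well-foundedness you package as strong induction on $n$, resting on the same observation that $\alpha\ge 2$ forces $m<n$.
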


\begin{proof}
	If $n\in\mathbb{N}(\Delta)$ but is not primitive, then $n=\alpha_1^2m_1$ for some $\alpha_1\geq 2$ and some $m_1\in\mathbb{N}(\Delta)$. If $m_1$ is primitive, we are done. Otherwise, $m_1=\alpha_2^2m_2$ for some $\alpha_2\geq 2$ and $m_2\in\mathbb{N}(\Delta)$. If $m_2$ is primitive, we are done. Otherwise, $m_2=\alpha_3^2m_3$, and so on. Continuing this way generates a strictly decreasing sequence $\{m_1,m_2,m_3,\ldots\}\subset\mathbb{N}(\Delta)$. This process cannot continue indefinitely, so there must exist some $k\in\mathbb{N}$ such that $m_k$ is $\Delta$-primitive, hence $n=(\alpha_1\cdots\alpha_k)^2m_k$.  
\end{proof}

The notion of $\Delta$-primitivity, together with the decomposition of \Cref{n=alfa^2 m_m.delta.prim}, raises three natural questions:
\begin{enumerate}
	\item Is the decomposition of \Cref{n=alfa^2 m_m.delta.prim} unique?
	
	\item Are there infinitely many $\Delta$-primitive numbers?
	
	\item Are there infinitely many $\Delta$-primitive squares?
\end{enumerate}
In the remainder of this section we address (1) (\Cref{prop:coprime-uniqueness}) and (2) (\Cref{Delta-primitivos.son.infinitos}), while (3) remains open (Conjecture \ref{conj:inf_many_delta-squares}).

%%%%%%%%%%%%%%%%%%%%%%%%%%%%%%%%%%%%%%%%%%%%%%%%%%%%%%%%

\subsection{Squares with the $\Delta$ property}

An interesting consequence of  \Cref{multiplos.cuadr.prop.delta} is that the set $\mathbb{N}(\Delta^2)$ of all squares satisfying the $\Delta$ condition is closed under multiplication.

\begin{corollary}
	\label{delta.cuadrados.cerrado}
	If $m^2,n^2\in\mathbb{N}(\Delta)$, then $m^2n^2\in\mathbb{N}(\Delta)$. In other words, the set $\mathbb{N}(\Delta^2)$, under the usual multiplication, forms an abelian semigroup.
\end{corollary}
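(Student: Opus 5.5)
The plan is to reduce the statement directly to \Cref{multiplos.cuadr.prop.delta}. Let $m^2,n^2\in\mathbb{N}(\Delta)$. Take $\alpha=m$ and apply \Cref{multiplos.cuadr.prop.delta} to the element $n^2\in\mathbb{N}(\Delta)$. This gives $m^2\cdot n^2\in\mathbb{N}(\Delta)$, and $m^2n^2=(mn)^2$ is a perfect square, so $m^2n^2\in\mathbb{N}(\Delta^2)$. We could equally take $\alpha=n$ and start from $m^2$; only one of the two hypotheses is actually needed for membership in $\mathbb{N}(\Delta)$.

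Concretely, the witnessing $\Delta$-triple is explicit. If $(x,y,z)$ is a $\Delta$-triple for $n^2$, then the proof of \Cref{multiplos.cuadr.prop.delta} shows that $(mx,my,mz)$ is a $\Delta$-triple for $m^2n^2$. The divisibility conditions, the strict inequalities $1<mx<my\le mz<mn$, and identity \eqref{ecuacion.Delta.prop} all scale correctly.

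For the semigroup claim, closure under multiplication is the step just shown. Associativity and commutativity are inherited from ordinary multiplication in $\mathbb{N}$, and $\mathbb{N}(\Delta^2)$ is nonempty. For nonemptiness I would exhibit one square with the $\Delta$ property: the paper notes $84^2\in\mathbb{N}(\Delta)$, or one can take $24^3\cdot 24$, since $24^3\in\mathbb{N}(\Delta)$ by the odd-power remark and $24^4=24^2\cdot 24^2$.

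There is no real obstacle here. The only point to state explicitly is that the product of two squares is a square, so the product stays inside the set of squares and not merely inside $\mathbb{N}(\Delta)$.
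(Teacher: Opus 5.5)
Your core argument is exactly the paper's. The paper's proof is a one-line appeal to \Cref{multiplos.cuadr.prop.delta}, which is what you do by taking $\alpha=m$ and applying it to $n^2$. Your explicit triple $(mx,my,mz)$ and your remark that $(mn)^2$ is again a square are both correct.

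One side claim is wrong: $24^4=24^3\cdot 24\in\mathbb{N}(\Delta)$ does not follow from the odd-power remark. \Cref{multiplos.cuadr.prop.delta} only multiplies an element of $\mathbb{N}(\Delta)$ by a square. Neither $24^4/24=24^3$ nor $24^4/24^3=24$ is a square, so odd powers of $24$ never reach even ones.

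The factorization $24^4=24^2\cdot 24^2$ would need $24^2\in\mathbb{N}(\Delta)$, and that is false. The nonzero elements of $D^*_{576}$ are $575,286,189,140,90,64,55,36,20,14$, and none of them is the sum of two (not necessarily distinct) elements of this list. This agrees with the paper's claim that $30^2$ is the least element of $\mathbb{N}(\Delta^2)$.

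In fact $24^4$ may not be in $\mathbb{N}(\Delta)$ at all. Suppose the paper's conjecture holds, that $24$ and $40$ are the only $\Delta$-primitives of the form $p^xq^y$. Then every element of $\mathbb{N}(\Delta)$ of the form $2^a3^b$ equals $24\alpha^2$, which is never a square, so $24^4\notin\mathbb{N}(\Delta)$.

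Drop that example. Nonemptiness, which closure does not need anyway, is witnessed by $30^2$ with the $\Delta$-triple $(9,10,25)$, since $\frac{900}{9}-9=91=\bigl(\frac{900}{10}-10\bigr)+\bigl(\frac{900}{25}-25\bigr)$.
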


\begin{proof}
	It follows immediately from  \Cref{multiplos.cuadr.prop.delta}.
\end{proof}

With the help of a computer, one can verify that $30^{2}$ is the first element of $\mathbb{N}(\Delta^{2})$. Then $|\mathbb{N}(\Delta^{2})| = \infty$, by \Cref{multiplos.cuadr.prop.delta}.

The answer to (1) turns out to depend on whether $n$ is a perfect square or not. In the square case, uniqueness genuinely fails: given any two distinct $\Delta$-primitive squares, we can construct an $n$ admitting more than one decomposition of the form $n = \alpha^{2} m$ with $m$ $\Delta$-primitive.

We illustrate this with an example. With the help of a computer, one can determine that $84^{2}$ is the next smallest $\Delta$-primitive square after $30^{2}$. By \Cref{multiplos.cuadr.prop.delta},
$30^{2} a^{2}, 84^{2} b^{2} \in \mathbb{N}(\Delta)$ for all $a, b \in \mathbb{N}$. Therefore, if we find a pair $(a, b) \in \mathbb{N}^{2}$ satisfying
\begin{equation}
	\label{eq27}
	30^{2} a^{2} = 84^{2} b^{2},
\end{equation}
then $n = 30^{2} a^{2}$ is the number we are looking for. Obviously, \eqref{eq27} has the same solutions as $5a = 14b$ in $\mathbb{N}^{2}$. Since $5$ and $14$ are coprime, we obtain $a = 14 a_{1}$, $b = 5 b_{1}$ for some $a_{1}, b_{1} \in \mathbb{N}$, and substituting back yields $a_{1} = b_{1}$. Therefore, the solutions are of the form $(a, b) = (14k, 5k)$ for $k \in \mathbb{N}$, and $n = 420^{2} k^{2}$. The following proposition generalizes this procedure.

\begin{proposition}\label{prop:two-representations}
	If $m$ and $l$ are perfect squares, then there exist $a,b\in\mathbb{N}$ such that $a^2m=b^2l$. In particular, if $m$ and $l$ are $\Delta$-primitive, then $n=a^2m$ is a square with the $\Delta$ property that admits more than one representation as the product of a square and a $\Delta$-primitive.
\end{proposition}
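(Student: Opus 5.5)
The plan is to generalize the $30^2$ versus $84^2$ computation preceding the statement. Write $m=\mu^2$ and $l=\lambda^2$ with $\mu,\lambda\in\mathbb{N}$. Let $g=\gcd(\mu,\lambda)$ and set
\[
a=\frac{\lambda}{g}, \qquad b=\frac{\mu}{g}.
\]
Then $a\mu=\frac{\mu\lambda}{g}=b\lambda$. Squaring gives
\[
a^2m=b^2l=\operatorname{lcm}(\mu,\lambda)^2 .
\]
This proves the first assertion. More generally, every solution has the form $(a,b)=(k\lambda/g,\,k\mu/g)$ with $k\in\mathbb{N}$, exactly as in the example with $(a,b)=(14k,5k)$. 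The choice $a=\lambda$, $b=\mu$ would also work, but the reduced one gives the smallest $n$.

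For the second assertion, assume in addition that $m$ and $l$ are $\Delta$-primitive and that $m\neq l$; distinctness is implicit in the phrase ``more than one representation.'' Put $n=a^2m$.

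First, $n$ has the $\Delta$ property: since $m\in\mathbb{N}(\Delta)$, \Cref{multiplos.cuadr.prop.delta} gives $a^2m\in\mathbb{N}(\Delta)$. Next, $n=(a\mu)^2$ is a perfect square. Finally, $n=a^2m=b^2l$ displays two factorizations of the form $\alpha^2\cdot(\text{$\Delta$-primitive})$. Their primitive parts $m\neq l$ differ, so the representations are genuinely distinct.

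The only delicate point is the side condition $\alpha\ge 2$ in the decomposition of \Cref{n=alfa^2 m_m.delta.prim}. If $a=1$, then $n=m$ is itself $\Delta$-primitive. There are two ways to handle this. One is to replace $(a,b)$ by $(ka,kb)$ with $k\ge 2$, which makes both coefficients at least $2$; this is harmless, since the solutions form the family above. The other is to note that $a=1$ forces $\lambda\mid\mu$, and $b=1$ forces $\mu\mid\lambda$. Both cannot hold at once, because that would give $m=l$.

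I expect no real obstacle, since the whole argument is the lcm construction. The one thing requiring care is stating the implicit hypothesis $m\neq l$ and ensuring that both square factors are nontrivial, either by scaling with $k\ge 2$ or by accepting the representation with $\alpha=1$.
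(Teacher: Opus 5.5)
Your proof is correct and follows the paper's route: both reduce $a^2m=b^2l$ to $a\sqrt{m}=b\sqrt{l}$ and divide out $g=\gcd(\sqrt{m},\sqrt{l})$, which gives $(a,b)=(\lambda/g,\mu/g)$ up to a factor $k$ (the paper derives that full family from coprimality, which you only assert, but existence is all the statement needs). One small sharpening: your worry about $\alpha\ge 2$ is moot, because $a=1$ would give $m=b^2l$ with $b\ge 2$ and $l\in\mathbb{N}(\Delta)$, contradicting the $\Delta$-primitivity of $m$ (and symmetrically for $b=1$), so both coefficients are automatically at least $2$ when $m\neq l$.
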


\begin{proof}
	To prove the claim, it is necessary to find the integer solutions of the equation $a^2m=b^2l$ in the variables $a$ and $b$. Clearly, in $\mathbb{N}^2$, this is equivalent to solving
	\begin{equation}
		\label{eq26}
		am_1=bl_1,
	\end{equation}
	where $m_1=\sqrt{m}/\gcd(\sqrt{m},\sqrt{l})$ and $l_1=\sqrt{l}/\gcd(\sqrt{m},\sqrt{l})$. 
	
	Since $\gcd(m_1,l_1)=1$, it follows that $m_1|b$ and $l_1|a$. Then, $b=m_1b_1$ and $a=l_1a_1$ for some $a_1,b_1\in\mathbb{N}$. Substituting into \eqref{eq26}, we get $a_1=b_1$. Hence, the solution set of \eqref{eq26} is $\{ (a,b)\in\mathbb{N}^2: a=l_1k, \ b=m_1k, \ k\in\mathbb{N} \}$. \qedhere
\end{proof}

%%%%%%%%%%%%%%%%%%%%%%%%%%%%%%%%%%%%%%%%%%%%%%%%%%%%%%%%

\subsection{Uniqueness of decompositions in $\mathbb{N}(\Delta)$}

\Cref{prop:two-representations} shows that uniqueness of decomposition in $\mathbb{N}(\Delta)$ is essentially a non-square phenomenon. To make this idea precise, we now establish a partial uniqueness result in the non-square case, under the additional assumption that the two $\Delta$-primitive parts of the decompositions are coprime. The following three lemmas provide partial information towards the uniqueness of the decomposition of \Cref{n=alfa^2 m_m.delta.prim}.

\begin{lemma}
	\label{ru^2=sv^2}
	Let $r,s\in\mathbb{N}$ be square-free integers with $r\neq s$. Then the equation $ru^2 = sv^2$ has no solution in positive integers $u,v$.
\end{lemma}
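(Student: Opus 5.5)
The natural route is to compare $p$-adic valuations. Suppose, for contradiction, that $u,v\in\mathbb{N}$ satisfy $ru^2=sv^2$. Since $r\neq s$ and both are square-free, they have different prime factorizations. Because each prime appears in a square-free number with exponent $0$ or $1$, there is a prime $p$ with $v_p(r)\neq v_p(s)$, that is, $\{v_p(r),v_p(s)\}=\{0,1\}$. Without loss of generality, take $v_p(r)=1$ and $v_p(s)=0$.

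Next, apply $v_p$ to both sides of the equation, using the multiplicativity of valuations. On the left we get
\[ v_p(ru^2)=1+2v_p(u), \]
which is odd. On the right we get
\[ v_p(sv^2)=2v_p(v), \]
which is even. An odd number cannot equal an even one, so no such $u,v$ exist.

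The only point needing care is producing the prime $p$, and this is exactly where square-freeness is used. If every prime $p$ satisfied $v_p(r)=v_p(s)\in\{0,1\}$, then unique factorization would force $r=s$, contradicting the hypothesis. If one prefers to avoid valuations, the same argument can be phrased by first dividing through by $\gcd(r,s)$. This reduces to coprime square-free $r',s'$, not both equal to $1$. One then takes a prime $p\mid r'$ and observes that $p\mid v^2$, so $p\mid v$ and hence $p^2\mid r'u^2$. Since $p$ does not divide $s'$, it follows that $p\mid u$. A descent on the exponent of $p$ then gives the contradiction. The valuation parity argument above is the cleaner version, and it is the one I would write.
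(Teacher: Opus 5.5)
Your valuation-parity argument is correct and complete. Square-freeness together with $r\neq s$ gives a prime $p$ with $\{v_p(r),v_p(s)\}=\{0,1\}$. The equation is symmetric under $(r,u)\leftrightarrow(s,v)$, so the ``without loss of generality'' is harmless. Then $v_p(ru^2)$ is odd while $v_p(sv^2)$ is even, which is the contradiction.

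The paper argues differently and much more tersely. It rewrites the equation as $r/s=(v/u)^2$ and simply asserts that the quotient of two distinct square-free integers is not a square in $\mathbb{Q}$. That assertion is essentially a restatement of the lemma itself. So the paper's proof is shorter but leaves the real content unjustified. Your route supplies exactly that missing justification. It also uses the same tool ($p$-adic valuations and the parity criterion for squares) that the paper introduces immediately afterwards for the next lemma, so it would sit naturally in that section.

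There is one small slip in your optional gcd-based variant. After $p^2\mid r'u^2$, the reason $p\mid u$ is that $r'$ is square-free (so $p^2\nmid r'$), not that $p\nmid s'$. The hypothesis $p\nmid s'$ is what gave you $p\mid v^2$ in the earlier step.
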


\begin{proof}
	Suppose that $ru^2 = sv^2$ for some $u,v\in\mathbb{N}$. Then $\frac{r}{s} = \left(\frac{v}{u}\right)^2$ is a square in $\mathbb{Q}$. However, since $r$ and $s$ are distinct square-free integers, the rational number $r/s$ is not a square in $\mathbb{Q}$. This is a contradiction.
\end{proof}

Let $p$ be a prime number and let $n\in\mathbb{N}$ with $n\neq 0$. The \emph{$p$-adic valuation} of $n$, denoted by $v_p(n)$, is the largest integer $k\geq 0$ such that $p^k \mid n$. We recall the following basic property of $v_p$: for all $a,b\in\mathbb{N}$,
\[
v_p(ab) = v_p(a) + v_p(b),
\]
and $n$ is a perfect square if and only if $v_p(n)$ is even for every prime $p$.

\begin{lemma}
	\label{kx_ky.cuadrados.implica.x_y.cuadrados}
	Let $x,y\in\mathbb{N}$ be coprime, and suppose that $kx$ and $ky$ are perfect squares for some $k\in\mathbb{N}$. Then $x$, $y$, and $k$ are all perfect squares.
\end{lemma}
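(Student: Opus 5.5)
The plan is to argue prime by prime using the $p$-adic valuation $v_p$ just recalled, together with the criterion that a positive integer is a perfect square if and only if every $v_p$ of it is even. Fix an arbitrary prime $p$. Since $\gcd(x,y)=1$, at most one of $v_p(x)$, $v_p(y)$ is nonzero; the proof splits according to which one vanishes.

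\emph{Case $v_p(x)=0$.} Here $v_p(kx)=v_p(k)$. Because $kx$ is a square, $v_p(k)$ is even. Then $v_p(ky)=v_p(k)+v_p(y)$ is even as well, since $ky$ is a square, so $v_p(y)$ is even.

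\emph{Case $v_p(y)=0$.} By the symmetric argument, $v_p(k)$ is even and hence $v_p(x)$ is even.

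In both cases $v_p(k)$, $v_p(x)$ and $v_p(y)$ are all even; the variable with valuation $0$ is trivially even. Since $p$ was arbitrary, the square criterion shows that $k$, $x$ and $y$ are perfect squares.

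The one step needing care is the case split. Coprimality is exactly what guarantees that for each $p$ one of $x,y$ contributes nothing. That lets us read off the parity of $v_p(k)$ from one of the two given squares, and then transfer the parity to the other variable through the second square. Without coprimality the conclusion fails (e.g.\ $k=2$, $x=y=2$), so the argument must use it at precisely this point. Beyond that, only additivity of $v_p$ is needed.
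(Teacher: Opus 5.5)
Your proof is correct and follows essentially the same route as the paper: a prime-by-prime parity argument on $v_p$, with coprimality guaranteeing that one of $v_p(x)$, $v_p(y)$ vanishes. The only cosmetic difference is that the paper first derives $v_p(x)\equiv v_p(y)\pmod 2$ and only at the end deduces that $k$ is a square. You instead read off the parity of $v_p(k)$ directly in each case.
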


\begin{proof}
	Let $p$ be a prime. Since $kx$ is a square, we have $v_p(k) + v_p(x) \equiv 0 \pmod{2}$, and similarly, $v_p(k) + v_p(y) \equiv 0 \pmod{2}$. Hence, $v_p(x) \equiv v_p(y) \pmod{2}$. Since $\gcd(x,y)=1$, at most one of $v_p(x), v_p(y)$ is nonzero, hence both must be even. Therefore, $x$ and $y$ are perfect squares. It follows that $k$ is also a square.
\end{proof}

\begin{lemma}
	\label{a^2x=b^2y.implica.x_y.cuadrados}
	Let $a,b,x,y\in\mathbb{N}$ such that $\gcd(x,y)=1$ and $a^2 x = b^2 y$. Then, $x$ and $y$ are perfect squares.
\end{lemma}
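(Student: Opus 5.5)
The plan is to reduce the statement to \Cref{kx_ky.cuadrados.implica.x_y.cuadrados} by producing a common multiplier $k$ for which $kx$ and $ky$ are both perfect squares. The natural candidate is $k=xy$ multiplied by a suitable square. From $a^2x=b^2y$ we multiply both sides by $y$ to get $a^2xy=b^2y^2=(by)^2$, so $a^2(xy)$ is a perfect square. Hence $xy$ itself is a perfect square: in valuation terms, $v_p(a^2xy)$ is even for every prime $p$, and subtracting the even number $v_p(a^2)=2v_p(a)$ shows $v_p(xy)$ is even.

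Next, we take $k=xy$. Then $kx=x^2y$ and $ky=xy^2$. Since $xy$ is a square, $x^2y=x\cdot(xy)$, and checking valuations is cleaner than attempting factorization. A more direct route, however, is available and I would use it instead. We have $xy$ a perfect square and $\gcd(x,y)=1$. For each prime $p$, at most one of $v_p(x),v_p(y)$ is nonzero, and their sum $v_p(xy)$ is even. Hence each of them is even. Thus $x$ and $y$ are squares. This is exactly the argument in the proof of \Cref{kx_ky.cuadrados.implica.x_y.cuadrados}.

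So we can simply invoke that lemma with $k=xy$: the products $kx=x^2y$ and $ky=xy^2$ are squares precisely because $y$ and $x$ would then need to be squares, which is circular. I will therefore avoid invoking the lemma and present the valuation argument directly. The steps are: (i) derive that $a^2xy$ is a square; (ii) deduce via $v_p$ that $xy$ is a square; (iii) use coprimality to split the parity condition on $v_p(xy)$ into $v_p(x)$ and $v_p(y)$ separately.

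There is no real obstacle here. The only care needed is in step (ii), where one must cancel the even contribution $2v_p(a)$ rather than dividing by $a^2$ and appealing loosely to rationality. An alternative for step (ii) is \Cref{ru^2=sv^2} applied to the square-free parts of $x$ and $y$: these must be equal, and since they are coprime they must both equal $1$. I would mention this as a cross-check, but the valuation proof is the one I would write.
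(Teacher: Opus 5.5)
Your proof is correct. From $a^2xy=(by)^2$ you get $v_p(x)+v_p(y)$ even for every prime $p$, and coprimality splits this into both valuations being even.

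The paper uses a slightly different reduction. Instead of multiplying through by $y$, it uses $\gcd(x,y)=1$ to deduce $x\mid b^2$ from $x\mid b^2y$ and sets $k=b^2/x$. Cancelling $x$ in $a^2x=kxy$ then gives $a^2=ky$ with the same $k$. So $kx=b^2$ and $ky=a^2$ are squares, and \Cref{kx_ky.cuadrados.implica.x_y.cuadrados} finishes. The common multiplier you were looking for does exist: it is $k=b^2/x$, an integer precisely because of coprimality, rather than $xy$. With that choice the appeal to the previous lemma is not circular.

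Your route is more self-contained: it uses coprimality only once and needs no divisibility step. The paper's route is more modular. Because the lemma also shows $k$ is a square, it yields as a by-product the full description of the solutions, $a=\sqrt{k}\sqrt{y}$ and $b=\sqrt{k}\sqrt{x}$.

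In a final write-up, drop the abandoned $k=xy$ discussion. Also note that your cross-check via \Cref{ru^2=sv^2} replaces both steps (ii) and (iii), not just step (ii).
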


\begin{proof}
	Since $x \mid b^2 y$ and $\gcd(x,y)=1$, it follows that $x \mid b^2$. Hence, there exists $k\in\mathbb{N}$ such that $b^2 = kx$. Similarly, $y \mid a^2$, so $a^2 = ky$. Thus, both $kx$ and $ky$ are perfect squares, and the result follows from \Cref{kx_ky.cuadrados.implica.x_y.cuadrados}.
\end{proof}

\begin{proposition}
	\label{prop:coprime-uniqueness}
	Let $n \in \mathbb{N}$ and suppose there exist $a, b \in \mathbb{N}$ and $x, y \in \mathbb{N}(\Delta)$ with  $a^2x=n=b^2y$, $\gcd(x,y)=1$ and $a<b$.	Then, $y < x$ and $\{n, x, y\} \subset \mathbb{N}(\Delta^{2})$. 
\end{proposition}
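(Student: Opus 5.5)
The argument is a direct combination of \Cref{a^2x=b^2y.implica.x_y.cuadrados} with an ordering of the two decompositions. Start from the hypothesis $a^2x=n=b^2y$ with $\gcd(x,y)=1$.

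First apply \Cref{a^2x=b^2y.implica.x_y.cuadrados} to the quadruple $(a,b,x,y)$. This immediately shows that $x$ and $y$ are perfect squares. Then $n=a^2x$ is a product of two squares, hence itself a perfect square. Since $x,y\in\mathbb{N}(\Delta)$ by hypothesis, we get $x,y\in\mathbb{N}(\Delta^2)$. For $n$, I will invoke \Cref{multiplos.cuadr.prop.delta} with $\alpha=a$ and the element $x\in\mathbb{N}(\Delta)$; this gives $n=a^2x\in\mathbb{N}(\Delta)$. Because $n$ is a square, $n\in\mathbb{N}(\Delta^2)$ as well, which proves $\{n,x,y\}\subset\mathbb{N}(\Delta^2)$.

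For the inequality, observe that $a^2x=b^2y$ together with $a<b$ gives $a^2<b^2$. Since $a^2x=b^2y$ and all quantities are positive, dividing yields $x/y=b^2/a^2>1$, so $y<x$. This step is routine.

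There is essentially no real obstacle. The only point needing care is to make sure the coprimality hypothesis is exactly the one \Cref{a^2x=b^2y.implica.x_y.cuadrados} requires. It is: that lemma uses only $\gcd(x,y)=1$ and $a^2x=b^2y$. One might also note as a remark that, by \Cref{ru^2=sv^2}, the non-square case cannot occur, since $x$ and $y$ would have coprime (hence distinct unless both equal $1$) square-free parts. This remark is not needed for the proof.
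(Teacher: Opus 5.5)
Your proof is correct and follows the paper's argument essentially step for step: the lemma on $a^2x=b^2y$ with $\gcd(x,y)=1$ makes $x$ and $y$ squares, hence $n$ is a square, the square-multiple proposition (with $\alpha=a$) puts $n$ in $\mathbb{N}(\Delta)$, and $y<x$ follows from $a<b$. Your closing remark via the square-free-part lemma is also valid but, as you say, not needed.
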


\begin{proof}
	By \Cref{a^2x=b^2y.implica.x_y.cuadrados}, both $x$ and $y$ are perfect squares. Since $n=a^2 x=b^2 y$, it follows that $n$ is also a perfect square. The claim about the $\Delta$ property follows from \Cref{multiplos.cuadr.prop.delta}. From $a<b$ follows immediately that $y<x$.
\end{proof}

The content of \Cref{prop:coprime-uniqueness} can be
paraphrased as follows: in the non-square case, no number $n$ admits two
decompositions $n = a^{2} x = b^{2} y$ with $x, y$ coprime
$\Delta$-primitives. 
%Together with
%\Cref{prop:two-representations}, which exhibits the
%non-uniqueness phenomenon in the square case, these results support
%the following conjecture, which answers~(Q3) in full for non-square $n$.
%
%\begin{conjecture}
%	\label{n.no-cuadr.implica.alpha,m.unicos}
%	Let $n=\alpha^2 m$, where $\alpha\in\mathbb{N}$ and $m$ is $\Delta$-primitive. If $n$ is not a perfect square, then $\alpha$ and $m$ are unique.
%\end{conjecture}
\Cref{prop:coprime-uniqueness} might suggest the conjecture that every non-square $n \in \mathbb{N}(\Delta)$ admits a
\emph{unique} decomposition $n = \alpha^{2} m$ with $\alpha \in \mathbb{N}$
and $m$ $\Delta$-primitive. This is false. The smallest counterexample is
\[
n \;=\; 5616 \;=\; 2^{4} \cdot 3^{3} \cdot 13 \;=\; 3^{2} \cdot 624 \;=\; 2^{2} \cdot 1404,
\]
where $624 = 2^{4} \cdot 3 \cdot 13$ is realized in $\mathbb{N}(\Delta)$
by the $\Delta$-triple $(8, 13, 13)$, and
$1404 = 2^{2} \cdot 3^{3} \cdot 13$ by $(26, 27, 36)$; both 624 and 1404 are $\Delta$-primitive. Note that they share the same square-free part, $3 \cdot 13 = 39$, and that $\gcd(624, 1404) = 156 > 1$, so the coprimality hypothesis of
\Cref{prop:coprime-uniqueness} is violated. The problem of
classifying the non-square integers $n$ admitting multiple
$\Delta$-primitive decompositions appears to be a delicate open
problem. 

The counterexample $5616$ points to a natural refinement of the
uniqueness question. For each square-free $s \in \mathbb{N}$, let
$P(s)$ denote the set of $\Delta$-primitives with square-free part $s$.
A non-square $n \in \mathbb{N}(\Delta)$ with square-free part $s$
admits multiple $\Delta$-primitive decompositions only if
$|P(s)| \geq 2$. We conjecture that the converse also holds: 

\begin{conjecture}
   If $|P(s)| = 1$, then every non-square $n \in \mathbb{N}(\Delta)$ with square-free part $s$ admits a unique $\Delta$-primitive decomposition.
\end{conjecture}

%%%%%%%%%%%%%%%%%%%%%%%%%%%%%%%%%%%%%%%%%%%%%%%%%%%%%%%%

\subsection{Infinitude of $\Delta$-primitive numbers}\label{subsec:delta_prim_are_infinite}

%\begin{lemma}
%	\label{Delta-num.y=z}
%	If $x$ is a natural number $\geq 2$, then
%	\[ n=x(2x-1)(3x-2)\in\mathbb{N}(\Delta). \]
%\end{lemma}
%
%\begin{proof}
%	...
%\end{proof}

\begin{theorem}
	\label{Delta-primitivos.son.infinitos}
	There are infinitely many $\Delta$-primitive numbers.
\end{theorem}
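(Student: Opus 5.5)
The plan is to use the cubic generator of \Cref{prop:max-duplicated}(3): for every integer $x\ge 2$, $F(x)=x(2x-1)(3x-2)$ lies in $\mathbb{N}(\Delta)$, witnessed by the $\Delta$-triple $(x,2x-1,2x-1)$. Following the introduction, I would evaluate $F$ at a suitable prime $p$. Then I would show that $F(p)$ is $\Delta$-primitive for all sufficiently large primes $p$ (or for infinitely many of them). Since the values $F(p)$ are pairwise distinct, this gives infinitely many $\Delta$-primitives.

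Fix a prime $p\ge 5$ and set $n=F(p)=p(2p-1)(3p-2)$. Suppose for contradiction that $n=\alpha^2 m$ with $\alpha\ge 2$ and $m\in\mathbb{N}(\Delta)$.

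\emph{Step 1.} Since $\gcd(p,2p-1)=\gcd(p,3p-2)=1$ (the second gcd divides $2$, and $p$ is odd), $p$ divides $n$ exactly once. Hence $p\nmid\alpha$, and so $p\mid m$.

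\emph{Step 2.} Let $(x',y',z')$ be a $\Delta$-triple for $m$. By \Cref{cotasup.D^*capD^+}-type bounds and by \Cref{prop:max-duplicated,prop:max-generic}, $m\le \max\{x'(2x'-1)(3x'-2),\,x'^2(x'+1)^2(x'^2+x'-1)\}$. In particular $x'$ must be at least of order $m^{1/6}$, which is too weak on its own. So I would instead exploit divisibility: $m\le n/4$. On the other hand, one of the complementary divisor pairs of $m$ that enters \eqref{ecuacion.Delta.prop} must interact with the factor $p$.

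A cleaner route is to check which components can be multiples of $p$. Each of $x',y',z'$ is less than $\sqrt m$, and $y'<2x'$, $z'<x'(x'+1)$. Write \eqref{ecuacion2.Delta.prop} for $m$ and reduce it modulo $p$. Either some component of the triple is divisible by $p$, or $p$ divides $x'y'+x'z'-y'z'$, or $p$ divides each cofactor $m/x'$, $m/y'$, $m/z'$. Since $p\,\|\,m$, in the last case no component is divisible by $p$, and then $p\mid (x'y'+x'z'-y'z')$ follows from \eqref{ecuacion2.Delta.prop}.

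\emph{Step 3.} The divisors of $m$ coprime to $p$ divide $(2p-1)(3p-2)/\alpha^2$, which is smaller than $6p^2/\alpha^2$. I would combine this with $x'y'+x'z'-y'z'\ge p$ and the identity $m=x'y'z'(y'+z'-x')/(x'y'+x'z'-y'z')$. This should force $m$ to be less than roughly $x'y'z'\cdot 2z'/p$. That is where I expect a size contradiction to come from: $m$ must be a multiple of $p$ while its $p$-free divisors are bounded by $O(p^2)$.

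The main obstacle is this Step 3 case analysis, namely ruling out a $\Delta$-triple for $m$ when $p\mid m$ exactly once and $m\mid F(p)$ with cofactor a square at least $4$. If the general argument gets messy, I would fall back to restricting $p$ so that $F(p)$ is nearly square-free. For example, I would choose $p$ such that $2p-1$ and $3p-2$ are themselves prime. Then $n$ is square-free, hence $\Delta$-primitive by the earlier remark, and the problem reduces to the infinitude of such primes, which is unproven. So the real proof must handle square factors of $(2p-1)(3p-2)$ directly. These factors are coprime up to a factor $1$ (their gcd divides $\gcd(2p-1,3p-2)\mid 1$), so $\alpha^2$ splits as $\alpha_1^2\alpha_2^2$ with $\alpha_1^2\mid 2p-1$ and $\alpha_2^2\mid 3p-2$. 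This bookkeeping is what I would feed into the modular and size analysis of Step 3.
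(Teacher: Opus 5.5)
\textbf{There is a genuine gap.} Your Steps~2 and~3 never reach a contradiction. They consist of three things:
\begin{itemize}
\item bounds that you yourself call too weak;
\item a mod-$p$ case split that excludes nothing, since a component divisible by $p$ can lie below $\sqrt{m}$ as soon as $m/p>p$;
\item the hope that the size estimates ``should force'' a contradiction.
\end{itemize}
The fallback needs infinitely many primes $p$ with $2p-1$ and $3p-2$ both prime, which is an open problem.

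The target you set, that $F(p)$ itself is $\Delta$-primitive for all large primes $p$, is stronger than needed. Nothing in your sketch or in the paper establishes it. For instance, if $9\mid 2p-1$ (e.g.\ $p=23$), then $m=F(p)/9=pk$ with $k=(2p-1)(3p-2)/9>p$. So \Cref{p>2k-2_entonces.pk.no tiene.prop.Delta} does not apply, and nothing you wrote rules out $m\in\mathbb{N}(\Delta)$.

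\textbf{Your Step~1 already finishes the proof.} You do not need $F(p)$ to be primitive. By \Cref{n=alfa^2 m_m.delta.prim}, either $F(p)$ is $\Delta$-primitive, or $F(p)=\alpha^2 m$ with $\alpha\ge 2$ and $m$ $\Delta$-primitive.
\begin{itemize}
\item In the second case, your argument applies: $p\,\|\,F(p)$, so $p\nmid\alpha$, so $p\mid m$. Hence $m\ge p$.
\item In the first case, $F(p)>p$ trivially.
\end{itemize}
Either way there is a $\Delta$-primitive number $\ge p$. Letting $p$ run through the odd primes gives arbitrarily large, hence infinitely many, $\Delta$-primitives.

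This is exactly the paper's proof, phrased there by contradiction. If all $\Delta$-primitives were below $p$, the primitive part $m$ of $F(p)$ would satisfy $p\nmid m$. That forces $p\mid\alpha$ and so $p^2\mid F(p)$, which is impossible.
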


\begin{proof}
	Suppose, for contradiction, that there are only finitely many $\Delta$-primitive numbers, say $m_1 < m_2 < \cdots < m_k$. Choose an odd prime $p > m_k$. By \Cref{prop:max-duplicated}, $F(p) = p(2p-1)(3p-2) \in \mathbb{N}(\Delta)$. Since $F(p)>p>m_k$, $F(p)$ cannot be $\Delta$-primitive. By \Cref{n=alfa^2 m_m.delta.prim}, $F(p) = \alpha^2 m$ for some $\alpha \ge 2$ and some $\Delta$-primitive $m \in\{m_1, \ldots, m_k\}$. In particular, $m < p$, so $p \nmid m$. Moreover, $p \nmid (2p-1)$ and $p \nmid (3p-2)$. 
	
	Since $p \mid F(p) = \alpha^2 m$ and $p \nmid m$, we conclude that $p \mid \alpha^2$, and since $p$ is prime, we can write $\alpha=p\beta$, for some $\beta\in\mathbb{N}$. Hence,	$F(p)=p^2\beta^2m$, i.e., $(2p-1)(3p-2)=p\beta^2m$. Therefore, $p \mid (2p-1)(3p-2)$, a contradiction.
\end{proof}

%\begin{proof}
%	We will prove that there are infinitely many $\Delta$-primitives of the form $f(x)=x(2x-1)(3x-2)$, where $x$ is an integer $\geq 2$ (see \Cref{prop:max-duplicated}).
%	
%	Suppose that $P=\{f(x_i): i\in[k]\}$ is the set of all $\Delta$-primitives of this form, with $x_1<x_2<\ldots<x_k$. Then $f(x_1)<f(x_2)<\ldots<f(x_k)$, since $f$ is increasing on $[2,\infty)$. Now consider an (odd) prime $p>f(x_k)$. Since $f(p)>f(x_k)$, the number $f(p)$ cannot be $\Delta$-primitive, i.e.,
%	\begin{equation}
%		f(p)=\alpha^2f(n), 
%		\label{eq1}
%	\end{equation}
%	for some $\alpha\geq 2$, $n\in\{x_1,\ldots, x_k\}$. Then, \eqref{eq1} becomes
%	\begin{equation}
%		p(2p-1)(3p-2)=\alpha^2n(2n-1)(3n-2).
%		\label{eq2}
%	\end{equation} 
%	We see that $p|\alpha^2f(n)$, but clearly $p$ cannot divide $n$, $2n-1$, or $3n-2$. Consequently, $p|\alpha$, so we can write $\alpha=p\beta$ for some $\beta\in\mathbb{N}$. Substituting into \eqref{eq2} yields
%	\begin{equation}
%		(2p-1)(3p-2)=p\beta^2f(n),
%		\label{eq3}
%	\end{equation} 
%	which is absurd since the right-hand side of \eqref{eq3} is divisible by $p$, while the left-hand side is not.  
%\end{proof}

\begin{conjecture}
	\label{conj:inf_many_delta-squares}
	There are infinitely many $\Delta$-primitive squares.
\end{conjecture}

%%%%%%%%%%%%%%%%%%%%%%%%%%%%%%%%%%%%%%%%%%%%%%%%%%%%%%%%%%%

\section{Generating polynomials} \label{sec:polinom.generadores}

In this section, we revisit and expand upon an idea that arose in \Cref{prop:max-duplicated,prop:max-generic}, developing a method for creating generating polynomials for numbers with the $\Delta$ property. \\

If $x$ is a natural number $>1$, consider
\begin{equation}
	\label{eq34}
	n=n(x)=x(x+2)(2x+1).
\end{equation}
Now observe that
\[ x(2x+1)-(x+2) = 2(x(x+2)-(2x+1)), \]
which is equivalent to
\begin{equation}
	\label{eq29}
	\frac{n}{x+2}-(x+2)=2\Big( \frac{n}{2x+1}-(2x+1) \Big).
\end{equation}
Since for all $x>1$ we have that $x+2, 2x+1\in D_n$ and
\[ 1<x+2<2x+1<\sqrt{n}, \]
it follows from \eqref{eq29} that $n\in\mathbb{N}(\Delta)$ for all $x>1$, since all requirements of \Cref{Delta.prop.caracterizacion} are satisfied. These simple calculations, together with what we saw in \Cref{prop:max-duplicated,prop:max-generic}, suggest the possibility of finding a more general method for constructing generating polynomials for numbers with the $\Delta$ property. 

%%%%%%%%%%%%%%%%%%%%%%%%%%%%%%%%%%%%%%%%%%%%%%%%%%%%%%%%%%%%%%

\subsection{The duplicated method ``$y=z$''}

Consider polynomials of the form
\[ n(x)=\prod_{i=1}^{k}(a_ix+b_i), \]
with $a_i,b_i\in\mathbb{Z}$, such that each linear factor $a_ix+b_i$ of $n$ is an element of $D_n$. Clearly, we must have $k\geq 2$, since we always need $n$ to have at least two nontrivial divisors for sufficiently large $x$. 

If $k=2$, then suppose that $n=(a_1x+b_1)(a_2x+b_2)$ satisfies the $\Delta$ property via its divisors $a_1x+b_1$ and $a_2x+b_2$. Without loss of generality, this can only happen in the following way:
\begin{equation}
	\label{eq30}
	\frac{n}{a_1x+b_1}-(a_2x+b_2)=2\Big( \frac{n}{a_2x+b_2}-(a_1x+b_1) \Big).
\end{equation}
Simplifying expression \eqref{eq30}, we obtain
\begin{equation}
	\label{eq31}
	(a_2-a_1)x+(b_2-b_1)=0.
\end{equation}
The key to the method we wish to present is this: if the left-hand side of \eqref{eq31} is the identically zero polynomial, then \eqref{eq30} holds. Therefore, we take $a_2-a_1=0=b_2-b_1$, that is, $a_2=a_1$ and $b_2=b_1$. However, now $n=(a_1x+b_1)^2$, so $a_1x+b_1=\sqrt{n}$, which is not acceptable by  \Cref{Delta.prop.caracterizacion}. Hence, we must have $k\geq 3$. 

For each $k\geq 3$, we now define the following polynomial:
\begin{equation}
	\label{eq32}
	f(x) = \frac{n}{a_1x + b_1} - (a_1x + b_1) - 2\left(\frac{n}{a_2x + b_2} - (a_2x + b_2)\right).
\end{equation}
Let $k=3$. Observe that the conditions $0<a_1<a_2$ and $a_3>0$ are sufficient to guarantee the existence of some $n_0\in\mathbb{N}$ such that the inequalities
\begin{equation*}
	1<a_1x+b_1<a_2x+b_2<\sqrt{n(x)}
\end{equation*}
hold simultaneously for $x\in[n_0,\infty)$. Moreover, since all $a_i$ are positive, we also ensure that $\deg(n)=3$ and that $n$ is eventually increasing. Through basic manipulations, we rewrite \eqref{eq32} as $f(x)=c_2x^2+c_1x+c_0$, where
\[ c_2= a_2a_3 - 2a_1a_3, \]
\[ c_1= a_2b_3 + a_3b_2 - a_1 - 2a_1b_3 - 2a_3b_1 + 2a_2, \]
\[ c_0= b_2b_3 - b_1 - 2b_1b_3 + 2b_2. \]
Since $f=0$ implies $n\in\mathbb{N}(\Delta)$ for all $x\geq n_0$, we must find integer solutions of the system $c_2=c_1=c_0=0$ in the six unknowns $a_i,b_i$. From $c_2=0$ we deduce that $a_2=2a_1$, which is compatible with assuming $a_1<a_2$. Substituting $a_2=2a_1$ into $c_1=0$ and $c_0=0$, we obtain respectively that $(2b_1-b_2)a_3=3a_1$ and $(2b_1-b_2)b_3=2b_2-b_1$. Since $a_1,a_3\neq 0$, it follows that $2b_1-b_2\neq 0$, which allows us to isolate:
\begin{equation*}
	a_3=\frac{3a_1}{2b_1-b_2}, \quad b_3=\frac{2b_2-b_1}{2b_1-b_2}.
\end{equation*}
We note that $a_1,a_3>0$ implies $2b_1>b_2$; in particular, $(b_1,b_2)\neq(0,0)$ and $b_3\geq 0$ if and only if $2b_2\geq b_1$. We see that $a_1,b_1$, and $b_2$ end up being free parameters. Finally, for $a_3,b_3\in\mathbb{Z}$, it is sufficient that $2b_1-b_2$ divides $\gcd(3a_1,2b_2-b_1)$. We summarize all this in the following theorem.

\begin{theorem}
	\label{polinom.Delta-generadores_deg=3}
	Let $a,b,c\in\mathbb{Z}$ such that $a>0$ and $2b>c$. Define
	\[ \alpha=\frac{3a}{2b-c}, \quad \beta=\frac{2c-b}{2b-c}, \]
	and consider the polynomial $n(x)=(ax+b)(2ax+c)(\alpha x+\beta)$. Let $n_0$ be the smallest natural number such that the inequalities
	\[ 1<ax+b<2ax+c<\sqrt{n(x)} \]
	hold simultaneously for $x\in[n_0,\infty)$. If $x\in\mathbb{N}$ and
	\[ (2b-c)\mid\gcd(3a,2c-b), \]
	then $n(x)\in\mathbb{N}(\Delta)$ for all $x\geq n_0$.
\end{theorem}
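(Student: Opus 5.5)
The plan is to show that, for every $x\ge n_0$, the triple $(ax+b,\,2ax+c,\,2ax+c)$ is a $\Delta$-triple for $N=n(x)$ in the sense of \Cref{Delta.prop.caracterizacion}. That characterization requires three things: the components are divisors of $N$, they satisfy the strict chain $1<X<Y\le Z<\sqrt{N}$, and \eqref{ecuacion.Delta.prop} holds with $Z=Y$. Here $X=ax+b$ and $Y=Z=2ax+c$.

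First, integrality. The hypothesis $(2b-c)\mid\gcd(3a,2c-b)$ gives $\alpha,\beta\in\mathbb{Z}$. Since $a>0$ and $2b>c$, we also get $\alpha>0$. So $n(x)$ is a product of three integer linear factors. For $x\ge n_0$ we have $ax+b>1$ and $2ax+c>ax+b>1$, and $N=(ax+b)(2ax+c)(\alpha x+\beta)$ exceeds $(2ax+c)^2>0$. It follows that $\alpha x+\beta=N/\bigl((ax+b)(2ax+c)\bigr)$ is a positive integer, so $N\in\mathbb{N}$. Hence $X,Y\in D_N$, with complementary divisors $N/X=(2ax+c)(\alpha x+\beta)$ and $N/Y=(ax+b)(\alpha x+\beta)$.

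Second, the identity. With $Z=Y$, \eqref{ecuacion.Delta.prop} reads $N/X-X=2(N/Y-Y)$. This is exactly the vanishing of the polynomial $f$ in \eqref{eq32} with $a_1=a$, $b_1=b$, $a_2=2a$, $b_2=c$, $a_3=\alpha$, $b_3=\beta$. I will use the preceding derivation: $c_2=a_2a_3-2a_1a_3=0$ because $a_2=2a_1$. The conditions $c_1=0$ and $c_0=0$ were shown equivalent to $(2b-c)\alpha=3a$ and $(2b-c)\beta=2c-b$, which hold by definition of $\alpha,\beta$. To be safe I will verify directly: $(2ax+c)(\alpha x+\beta)-(ax+b)-2(ax+b)(\alpha x+\beta)+2(2ax+c)$ equals $(\alpha x+\beta)(c-2b)+3ax+(2c-b)$. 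This vanishes identically after substituting $\alpha,\beta$. Since the expression is zero as a polynomial, it is zero at every integer $x$.

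Third, the inequalities. They are precisely the defining property of $n_0$: $1<X<Y<\sqrt{N}$ for all $x\ge n_0$, and $Y\le Z$ is trivial. So \Cref{Delta.prop.caracterizacion} applies and $N\in\mathbb{N}(\Delta)$.

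The only real obstacle is a subtlety in the statement: $n_0$ must exist. This follows from $\deg n=3$ with positive leading coefficient $2a^2\alpha$, while $(2ax+c)^2$ has degree $2$, so the chain of inequalities holds eventually. I would note this briefly, and also that $\alpha x+\beta>0$ is forced by the inequalities, so no separate sign hypothesis on $\beta$ is needed.
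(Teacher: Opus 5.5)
Your proposal is correct and follows essentially the same route as the paper, whose proof just points back to the preceding discussion. That discussion uses the triple $(ax+b,\,2ax+c,\,2ax+c)$, makes $c_2,c_1,c_0$ in \eqref{eq32} vanish through the choice of $\alpha,\beta$, and feeds the defining inequalities of $n_0$ into \Cref{Delta.prop.caracterizacion}. Your direct expansion of the identity and your check that $\alpha x+\beta$ is a positive integer (so $n(x)\in\mathbb{N}$ and the linear factors are genuine divisors) make explicit details the paper leaves implicit.
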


\begin{proof}
	It follows from the preceding discussion.
\end{proof}

Let us see some examples of how to use  \Cref{polinom.Delta-generadores_deg=3} to construct generating polynomials for numbers with the $\Delta$ property.

If $\gcd(3a,2c-b)$ is a prime $p$, then $2b-c\in\{1,p\}$. If $2b-c=1$, then $c=2b-1$, $\alpha=3a$, and $\beta=3b-2$, yielding
\[ n(x)=(ax+b)(2ax+2b-1)(3ax+3b-2), \]
for $a\in\mathbb{N}$ and $b\in\mathbb{Z}$. For example, if $(a,b)=(3,4)$, then
\[ (3x+4)(6x+7)(9x+10)\in\mathbb{N}(\Delta), \]
for all $x\in\mathbb{N}$. If instead $(a,b)=(1,0)$, we recover the polynomial $x(2x-1)(3x-2)$ from  \Cref{prop:max-duplicated}.

If $2b-c=p$, then $c=2b-p$, $\alpha=\frac{3a}{p}$, and $\beta=\frac{3b}{p}-2$. Then: either $p=3$, or $p\neq 3$ and $p\mid\gcd(a,b)$. If $p=3$, then $c=2b-3$, $\alpha=a$, and $\beta=b-2$, so
\[ n(x)=(ax+b)(2ax+2b-3)(ax+b-2). \]
For example, if $(a,b)=(1,-1)$, then
\[ (x-1)(2x-5)(x-3)\in\mathbb{N}(\Delta), \]
for all $x\geq 5$, $x\in\mathbb{N}$. If instead $(a,b)=(1,2)$, we recover the polynomial \eqref{eq34}. If $p\neq 3$ and $(a,b)=(2,-4)$, then $p=2$, $c=-10$, $\alpha=3$, and $\beta=-8$. Hence,
\[ 4(x-2)(2x-5)(3x-8)\in\mathbb{N}(\Delta), \]
for all $x\geq 4$, $x\in\mathbb{N}$.

%%%%%%%%%%%%%%%%%%%%%%%%%%%%%%%%%%%%%%%%%%%%%%%%%%%%%%%%

\subsection{The failure of the duplicated method for $k \geq 4$}
\label{subsec:failure}

The method developed above relies crucially on the \emph{duplicated regime} $y = z$ of the $\Delta$ condition: two of the three divisors in a $\Delta$-triple are forced to coincide. At the level of the generating polynomial, this manifests itself in the factor $2$ appearing in \eqref{eq32}, which couples two linear factors of $n(x)$ as $x$ and $y = z$. One may ask whether the same duplicated method can be extended to products of $k \geq 4$ linear factors in order to produce generating polynomials of higher degree. The purpose of this subsection is to show that the answer is negative: the duplicated method breaks down for every $k \geq 4$, by an obstruction concentrated on the leading coefficients of the resulting polynomial.

Concretely, fix $k \geq 3$ and consider a product
\[
n(x) \;=\; \prod_{i=1}^{k}(a_{i} x + b_{i}), \qquad a_{i} \in \mathbb{N},\ b_{i} \in \mathbb{Z},
\]
in which $X = a_{1} x + b_{1}$ and $Y = a_{2} x + b_{2}$ are
chosen to play the role of $x$ and $y = z$ in the $\Delta$ condition.
Write $M$ for the product of the remaining $k-2$ linear factors,
so that $n/X = Y  M$ and $n/Y = X  M$. The duplicated
$\Delta$ condition $\tfrac{n}{X} - X = 2\!\left(\tfrac{n}{Y} - Y\right)$
becomes the polynomial identity $f= 0$, where
\begin{equation}\label{eq:duplicated-fk}
	f \;=\; (Y - 2X)M \;+\; (2Y - X).
\end{equation}

\begin{theorem}
	\label{thm:failure-kgeq4}
	Let $k \geq 4$. There is no choice of coefficients $a_{i}, b_{i} \in \mathbb{Z}$ with $a_{i}>0$ for which the identity
	$f= 0$ (see \eqref{eq:duplicated-fk}) holds.
\end{theorem}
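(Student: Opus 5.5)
We argue by degree comparison in the identity $f=(Y-2X)M+(2Y-X)=0$ of \eqref{eq:duplicated-fk}, where $M=\prod_{i=3}^{k}(a_ix+b_i)$ has degree exactly $k-2\ge 2$. The reason is that $a_i>0$ for all $i$, so the leading coefficient $\prod_{i\ge3}a_i$ is nonzero. The second summand $2Y-X=(2a_2-a_1)x+(2b_2-b_1)$ has degree at most $1$. The first summand $(Y-2X)M$ is the product of the linear polynomial $Y-2X=(a_2-2a_1)x+(b_2-2b_1)$ with $M$.

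The plan is to split according to whether $Y-2X$ is the zero polynomial.

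\emph{Case 1: $Y-2X\neq 0$.} Since $\mathbb{Z}[x]$ is an integral domain, $\deg\bigl((Y-2X)M\bigr)=\deg(Y-2X)+(k-2)\ge k-2\ge 2$. The coefficient of $x^{k-2}$ or higher in $f$ therefore comes only from the first summand, and is nonzero. Concretely, the coefficient of $x^{\deg(Y-2X)+k-2}$ is the leading coefficient of $Y-2X$ times $\prod_{i\ge 3}a_i\neq 0$, while $2Y-X$ contributes nothing in degrees $\ge 2$. Hence $f\neq 0$.

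\emph{Case 2: $Y-2X=0$, i.e.\ $a_2=2a_1$ and $b_2=2b_1$.} Then $f=2Y-X=3X$. This vanishes only if $a_1=0$, contradicting $a_1>0$.

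In both cases $f\not\equiv 0$, which proves the theorem. Equivalently, one may compare the $x^{k-1}$ coefficient, $a_2-2a_1$ times the leading coefficient of $M$, which forces $a_2=2a_1$. One then compares the $x^{k-2}$ coefficient, which forces $b_2=2b_1$ because $k-2\ge 2>1$ keeps $2Y-X$ out of that degree. Finally the linear term gives $2a_2-a_1=3a_1=0$.

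The only point requiring care, and the place where $k\ge4$ is used, is the claim that the degree-$(k-2)$ part of $f$ is untouched by $2Y-X$. This needs $k-2\ge 2$. For $k=3$ it fails, since the $x^{1}$ coefficient mixes both summands, and that is exactly why the cubic family of \Cref{polinom.Delta-generadores_deg=3} exists. I would also note explicitly that the hypothesis $a_i>0$ for $i\ge3$ is what guarantees $\deg M=k-2$; without it the argument would collapse.
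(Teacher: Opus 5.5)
Your proof is correct and is essentially the paper's argument. The paper compares the $x^{k-1}$ and $x^{k-2}$ coefficients of $(Y-2X)M$ to force $a_2=2a_1$ and then $b_2=2b_1$, using $k-2\ge 2$ to keep $2Y-X$ out of those degrees, and concludes from $f=3X\neq 0$; this is exactly your ``equivalently'' paragraph, and your case split on whether $Y-2X$ vanishes is just a repackaging of the same leading-coefficient comparison.
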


%\begin{proof}
%	Since $\deg (M) = k - 2$ and $\deg(Y - 2X) \leq 1$, the first summand
%	of~\eqref{eq:duplicated-fk} has degree at most $k-1$, while the second
%	summand has degree $1$. As $k \geq 4$, the two summands have disjoint
%	leading terms, and the identity $f= 0$ decomposes into the
%	vanishing of the leading coefficients of $M  (Y - 2X)$ at degrees
%	$x^{k-1}$ and $x^{k-2}$, together with lower-order conditions.
%	
%	The leading coefficient of $M  (Y - 2X)$ is $(a_{j} - 2 a_{i})\prod_{l \neq i, j} a_{l}$. Since $a_{l} > 0$ for all $l$, the vanishing of this coefficient forces $a_{j} =2 a_{i}$. Substituting it into $Y - 2X$, this factor reduces to the constant $b_{j} - 2 b_{i}$, and hence the coefficient of $x^{k-2}$ in $M  (Y - 2X)$ is $(b_{j} - 2 b_{i})\prod_{l \neq i, j} a_{l}$. Again using $a_{l} > 0$ for all $l$, its vanishing forces $b_{j} = 2 b_{i}$.
%
%	Combining $a_{j} =2 a_{i}$ and $b_{j} = 2 b_{i}$ yields $Y = 2X$ as polynomials in $x$, so the first summand of~\eqref{eq:duplicated-fk} vanishes identically. The identity $f =0$ then reduces to $0=2Y-X=3(a_{i} x + b_{i})$, which is impossible since $a_{i} > 0$.
%\end{proof}

\begin{proof}
	Since $\deg(M) = k - 2$ and $\deg(Y - 2X) \leq 1$, the first summand of~\eqref{eq:duplicated-fk} has degree $\le k-1$, whereas the second summand $2Y - X$ has degree  $\le 1$. Since $k-1 \geq 3$, the higher-degree terms of the first summand cannot possibly be canceled by the second summand. For the identity $f=0$ to hold, the coefficients of $x^{j}$ for $j\ge 2$ in $(Y - 2X)M$ must vanish, since $\deg(2Y-X)\le 1$.
	
	The coefficient of $x^{k-1}$ in $(Y - 2X)M$ is $(a_2 - 2a_1) \prod_{i>2} a_i$. Since $a_i > 0$ for all $i$, forcing this to zero requires $a_2 = 2a_1$. This substitution cancels the $x$ term inside $Y - 2X$, reducing it to the constant $b_2 - 2b_1$. Consequently, the maximum degree of the first summand drops to $k-2$. However, since $k \geq 4$, this degree is at least $2$, which still strictly exceeds the degree of the second summand. Therefore, the coefficient $(b_2 - 2b_1) \prod_{i >2} a_i$ of $x^{k-2}$ must also vanish. Hence, $b_2 = 2b_1$.
	
	Combining $a_2 = 2a_1$ and $b_2 = 2b_1$ implies that $Y = 2X$ as polynomials. Substituting this back into~\eqref{eq:duplicated-fk}, the identity $0=f$ then collapses to: $0 = 0M+(2(2X) - X) =3(a_1 x + b_1)$. This is a clear contradiction, as $3a_1$ cannot be zero given that $a_1 > 0$.
\end{proof}

%The proof exposes the structural reason behind the failure: the duplicated method \eqref{eq:duplicated-fk} requires a single linear factor $Y - 2X$ to compensate for a residual factor $M$ of degree $k-2$. For $k = 3$, $M$ is constant and this compensation is feasible; for $k \geq 4$, $M$ has degree $\geq 2$, and the compensation is simply not available within the duplicated ansatz.

\Cref{thm:failure-kgeq4} should not be read as an obstruction to generating polynomials of degree $\geq 4$, but rather as an indication that a different ansatz is required. Indeed, for $k \geq 4$ the natural replacement is the \emph{generic regime} $y < z$: three distinct linear factors $X, Y, Z$ of $n(x)$ play the roles of the three divisors in a $\Delta$-triple for $n$ and the $k-3$ remaining factors form the free mass of the construction. The resulting polynomial identity reduces to a system of equations whose systematic treatment we leave to future work.

%Combined with \Cref{prop:polynomials}, \Cref{thm:failure-kgeq4} establishes a sharp dichotomy
%between two regimes in which the $\Delta$ condition can be realized through linear families:
%	\begin{itemize}
%		\item[(i)] \emph{Duplicated regime} ($y = z$): corresponds to degree~$3$
%		generating polynomials. In this regime, every duplicated $\Delta$-triple $(x, y, y)$ satisfies the universal linear relation
%		$y = 2x - c$, where $c = 2x - y$ is a positive integer.
%		\item[(ii)] \emph{Generic regime} ($y < z$): corresponds to generating
%		polynomials of degree $\geq 4$, in which the three divisors $x, y, z$
%		are realized by three distinct linear factors of $n(x)$.
%	\end{itemize}
%In particular, no linear family of degree $\geq 4$ can be reached via the duplicated ansatz, and no linear family of degree $3$ can be reached via the generic ansatz. This dichotomy explains the structural asymmetry observed between \Cref{prop:cubicgenerator} and \Cref{prop:polynomials}.

%%%%%%%%%%%%%%%%%%%%%%%%%%%%%%%%%%%%%%%%%%%%%%%%%%%%%%%%

\subsection{The failure of the generic method for $k = 3$}
\label{subsec:failure-generic-k3}

The previous subsection rules out the duplicated ansatz $y = z$ for $k \geq 4$. We now establish the complementary obstruction: in the generic regime $y < z$, no linear $\Delta$-family exists for $k = 3$. The proof reduces \eqref{ecuacion2.Delta.prop} to a polynomial identity governed by the polynomial $x^{2} - x + 1$, whose irreducibility over $\mathbb{R}$ provides the obstruction. The starting point is the following result, characterizing the case $n = xyz$.

\begin{proposition}\label{prop:n=xyz}
	Let $(x, y, z)$ be a $\Delta$-triple for $n$. Then, $n = xyz$ if and only if
	\begin{equation}\label{eq:xyz-identity}
		(y - x + 1)\,(z - x + 1) \;=\; x^{2} - x + 1.
	\end{equation}
\end{proposition}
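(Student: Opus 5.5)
Since $(x,y,z)$ is a $\Delta$-triple, \eqref{ecuacion2.Delta.prop} gives $(xy+xz-yz)\,n = xyz\,(y+z-x)$, and the coefficient $xy+xz-yz$ is a positive integer (shown in the proof of \Cref{prop:y/x.cota.universal}). Because $xyz>0$, the identity $n=xyz$ holds if and only if
\[
xy+xz-yz \;=\; y+z-x .
\]
Both directions of the equivalence therefore reduce to showing that this single polynomial equation is the same as \eqref{eq:xyz-identity}.

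The main step is an algebraic rearrangement. Move everything to one side to get
\[
yz - xy - xz + y + z - x = 0 .
\]
Now expand the left-hand side of \eqref{eq:xyz-identity}. Write $(y-x+1)(z-x+1)=(y-(x-1))(z-(x-1))$. This equals
\[
yz-(x-1)(y+z)+(x-1)^2 = yz - xy - xz + y + z + x^2-2x+1 .
\]
Subtracting $x^2-x+1$ leaves exactly $yz-xy-xz+y+z-x$. So \eqref{eq:xyz-identity} is equivalent to the equation displayed above. Since every step is reversible, the equivalence holds in both directions.

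There is no real obstacle. The only point to state explicitly is that dividing \eqref{ecuacion2.Delta.prop} by $xyz$ and by the positive factor $xy+xz-yz$ is legitimate, which follows from the $\Delta$-triple hypotheses. The rest is a routine check of the completed-square identity. I would also note for later use that the hypothesis $y\le z$ is not needed, and that the right-hand side $x^2-x+1$ is what drives the obstruction announced for \Cref{subsec:failure-generic-k3}.
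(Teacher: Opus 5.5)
Your proposal is correct and follows essentially the paper's argument: you reduce $n=xyz$ via \eqref{ecuacion2.Delta.prop} to $xy+xz-yz=y+z-x$, then check that this is equivalent to \eqref{eq:xyz-identity} by expanding the product, while the paper adds $(y-x)+(z-x)+1$ to both sides and factors, which is the same computation. You also state explicitly that $xy+xz-yz>0$ is what justifies the converse direction, which is slightly more careful than the paper's terse ``same manipulation in reverse.''
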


\begin{proof}
	Assume $n = xyz$. Substituting into \eqref{ecuacion2.Delta.prop} gives 
	\begin{equation}\label{eq:xyz-pre-identity}
		yz - xy - xz \;=\; x - y - z.
	\end{equation}
	Adding $(y - x) + (z - x) + 1$ to both sides and grouping yields $(y-x)(z-x) + (y-x) + (z-x) + 1 = x^{2} - x + 1$, which factors as \eqref{eq:xyz-identity}. The converse is the same manipulation in reverse: \eqref{eq:xyz-identity} expands to \eqref{eq:xyz-pre-identity}, which together with $(x, y, z) \in D_n^3$ forces $n = xyz$ via \eqref{ecuacion2.Delta.prop}.
\end{proof}

The smallest examples where \Cref{prop:n=xyz} applies are 385 and 2080, with $\Delta$-triples $(5,7,11)$ and $(8,10,26)$, respectively.

\begin{theorem}
	\label{thm:failure-generic-k3}
	There do not exist polynomials $X, Y, Z \in \mathbb{Z}[t]$ of degree $\leq 1$, such that $\bigl(X(t),\, Y(t),\, Z(t)\bigr)$ is a $\Delta$-triple for $n(t) = (XYZ)(t)$ for infinitely many $t \in \mathbb{N}$.
\end{theorem}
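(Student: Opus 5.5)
The plan is to reduce the statement to a polynomial identity through \Cref{prop:n=xyz}, and then compare real roots.

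\textbf{Step 1: reduction to an identity in $\mathbb{Z}[t]$.} Suppose $X,Y,Z\in\mathbb{Z}[t]$ have degree $\le 1$. Suppose also that $(X(t),Y(t),Z(t))$ is a $\Delta$-triple for $n(t)=(XYZ)(t)$ for every $t$ in an infinite set $T\subset\mathbb{N}$. For each such $t$, \Cref{prop:n=xyz} gives
\[
(Y(t)-X(t)+1)(Z(t)-X(t)+1)=X(t)^2-X(t)+1 .
\]
Both sides are polynomials in $t$ and they agree on the infinite set $T$. Hence we get the identity $(Y-X+1)(Z-X+1)=X^2-X+1$ in $\mathbb{Z}[t]$.

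\textbf{Step 2: degrees.} Suppose first that $X$ is constant. By \Cref{prop:y/x.cota.universal,prop:bound-z}, we have $X<Y<2X$ and $Z<X(X+1)$ on $T$. So $Y$ and $Z$ are bounded on an infinite set, which forces them to be constant. The family then degenerates to a single fixed triple; for example $(5,7,11)$ for $385$ is such a constant "family". So the statement must be read as excluding constant families, and I would make this explicit: the triple is nonconstant.

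By the argument just given, if any of $X,Y,Z$ is nonconstant then $X$ is nonconstant. Thus $\deg X=1$, and since $X(t)>1$ for infinitely many $t\in\mathbb{N}$, its leading coefficient is positive. The bounds $X<Y<2X$ and $Y\le Z$ on $T$ then force $\deg Y=\deg Z=1$. Consequently $X^2-X+1$ has degree exactly $2$.

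\textbf{Step 3: the real-root obstruction.} The right-hand side $X(t)^2-X(t)+1$ has no real root in $t$. Indeed, $u^2-u+1=(u-\tfrac12)^2+\tfrac34>0$ for every real $u$. This is the irreducibility over $\mathbb{R}$ mentioned before the statement.

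On the left-hand side, each factor $Y-X+1$ and $Z-X+1$ has degree $\le 1$. There are two cases.
\begin{itemize}
\item If one of the factors is constant, the product has degree $\le 1<2$. This contradicts the identity of Step 1.
\item If both factors have degree exactly $1$, each has a real root. Then the product vanishes at some real $t$, while the right-hand side never does. This again contradicts the identity.
\end{itemize}
In either case no such family exists.

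\textbf{Main obstacle.} The main delicate point is Step 2. We must handle degenerate (constant or partially constant) families and make precise that the statement concerns nonconstant families. The bounds from \Cref{prop:y/x.cota.universal,prop:bound-z} are what push all three polynomials to degree $1$ as soon as one of them is nonconstant. Once that is settled, Step 3 is a short comparison of real roots.
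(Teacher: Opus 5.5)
Your proof follows the paper's route: \Cref{prop:n=xyz} turns the hypothesis into the identity $(Y-X+1)(Z-X+1)=X^2-X+1$ in $\mathbb{Z}[t]$, and the right-hand side has no real roots (the paper phrases this as irreducibility over $\mathbb{R}$, via the discriminant $-3a^2$), so it cannot equal a product of two factors of degree $\le 1$. Your treatment of the degenerate case is sounder than the paper's. The paper dismisses constant $X$ by citing \Cref{cor:finite-fixed-component}, but, as you observe, constant $X$ forces $Y,Z$ to be constant, which gives a single triple and a single $n$ and hence no contradiction. The constant triple $(5,7,11)$ with $n=385$ is in fact a counterexample to the literal statement, so your explicit restriction to nonconstant families is a necessary correction, not a pedantic one.
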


\begin{proof}
	We proceed by contradiction. Suppose there exist polynomials $X, Y, Z$ $\in\mathbb{Z}[t]$ of degree $\le 1$ such that the set 
	\[ S=\{t\in\mathbb{N}: (X(t),Y(t),Z(t)) \ \text{is a $\Delta$-triple for} \ n(t) \} \]
	is infinite. By \Cref{prop:n=xyz}, the integer identity
	\[
	\bigl(Y(t) - X(t) + 1\bigr)\bigl(Z(t) - X(t) + 1\bigr) \;=\; X(t)^{2} - X(t) + 1
	\]
	holds for all $t \in S$. Two polynomials in $\mathbb{Z}[t]$ that agree on an infinite set are equal; hence
	\begin{equation}\label{eq:identity-X-Y-Z}
		\bigl(Y - X + 1\bigr)\bigl(Z - X + 1\bigr) \;=\; X^{2} - X + 1
	\end{equation}
	in $\mathbb{Z}[t]$. Write $X = at + b$, for some $a,b\in\mathbb{Z}$. If $a=0$ and $b\le 1$, then $X(t)$ would not be a component of a $\Delta$-triple for all $t$. If $a=0$ and $b>1$, then $X(t)$ would be a fixed component of infinitely many $\Delta$-triples, contradicting \Cref{cor:finite-fixed-component}. Thus, $a\neq 0$. The right-hand side of \eqref{eq:identity-X-Y-Z} equals $a^{2}t^{2}+a(2b - 1)t+(b^{2} - b + 1)$, whose discriminant is $-3a^{2}$. Hence, the right-hand side  of \eqref{eq:identity-X-Y-Z} is irreducible over $\mathbb{R}[t]$. The left-hand side of \eqref{eq:identity-X-Y-Z}, however, is the product of two polynomials of degree $\leq 1$ in $\mathbb{R}[t]$, contradicting irreducibility.
\end{proof}

%In contrast to the duplicated regime, where a single algebraic identity in three integer parameters generates the infinite linear family of \Cref{polinom.Delta-generadores_deg=3}, the integer solutions of \eqref{eq:xyz-identity} are intrinsically sporadic: each is dictated by a particular nontrivial factorization of $x^{2} - x + 1$ for some specific $x$, and they cannot be threaded into a one-parameter linear family. The smallest examples are
%\[
%(x, y, z) = (5, 7, 11), \quad x^{2} - x + 1 = 21 = 3 \cdot 7, \quad n = 385,
%\]
%and
%\[
%(x, y, z) = (8, 10, 26), \quad x^{2} - x + 1 = 57 = 3 \cdot 19, \quad n = 2080.
%\]
%Realizations of \eqref{eq:xyz-identity} with $\gcd(x, y) > 1$ or $\gcd(x, z) > 1$ -- such as $(8, 10, 26)$ -- lie strictly outside the prime case treated in \Cref{pqr.prop.Delta}.

%%%%%%%%%%%%%%%%%%%%%%%%%%%%%%%%%%%%%%%%%%%%%%%%%%%%%%%%

\subsection{Elliptic curves and $\Delta$-primitive squares}
\label{subsec:elliptic}

The generating polynomials of \Cref{polinom.Delta-generadores_deg=3} produce elements of $\mathbb{N}(\Delta)$ in abundance, but they say nothing about which of those elements are squares. This is precisely the question left open by Conjecture \ref{conj:inf_many_delta-squares}, and it admits a natural reformulation in terms of integer points on elliptic curves.

Let $f\in\mathbb{Z}[x]$ be any of the cubic generating polynomials supplied by \Cref{polinom.Delta-generadores_deg=3}. Then $f(x_0)$ is a perfect square (hence, a square in $\mathbb{N}(\Delta)$) precisely when $(x_0,y_0)$ is an integer point on the affine cubic $E_f: y^2=f(x)$. By Siegel's theorem~\cite{siegel1929}, each individual curve $E_f$ carries only finitely many integer points. However, \Cref{polinom.Delta-generadores_deg=3} provides an infinite family $\{f_\lambda\}$ of cubic generating polynomials, and therefore an infinite family $\{E_{f_\lambda}\}$ of elliptic curves. This suggests a concrete strategy toward Conjecture \ref{conj:inf_many_delta-squares}.

\begin{problem}
	\label{prob:elliptic-families}
	Can one select a family $\{f_\lambda\}_{\lambda\in\Lambda}$ of cubic generating polynomials such that the integer points of the associated elliptic curves $\{E_{f_\lambda}\}_{\lambda\in\Lambda}$ produce infinitely many \emph{distinct} $\Delta$-primitive squares?
\end{problem}

A positive answer to Problem \ref{prob:elliptic-families} would settle Conjecture \ref{conj:inf_many_delta-squares} affirmatively and would constitute, to our knowledge, the first application of diophantine geometry to 2-switch-degree theory. 

%%%%%%%%%%%%%%%%%%%%%%%%%%%%%%%%%%%%%%%%%%%%%%%%%%%%%%%%%%%%%%

\section{Numbers without the $\Delta$ property}\label{sec:Num.sin.peop.Delta}

In this section we present a long list of families of numbers that do not satisfy the $\Delta$ condition. This class of numbers is infinite. 
%In fact, it states that no even integer not divisible by 4 has the $\Delta$ property. A key insight is that $pk \notin \mathbb{N}(\Delta)$ when $p$ is a prime $\ge k$. A similar phenomenon occurs for numbers $n$ of the form $p^x q^y$ ($p, q$ primes): if $q > p^x$, then $n \notin \mathbb{N}(\Delta)$.
We note that numbers with very few prime factors in their factorization are less likely to have the $\Delta$ property. 
%For example, powers of a prime or products of certain powers of two primes, such as $pq, pq^2, p^2 q^2$, and $p^k q$. We conjecture that if $n$ is a $\Delta$-primitive of the form $p^x q^y$ ($p, q$ primes), then $n \in \{24, 40\}$, and we characterize when a product of three primes $pqr \in \mathbb{N}(\Delta)$. 
Moreover, having information about the ordering of the elements of $D_n$ is crucial to prove that $n \notin \mathbb{N}(\Delta)$. After all this analysis, we return to split graphs, establishing a result about the $n$-simple induced cycles of $\Phi$, when $n$ does not satisfy the $\Delta$ condition and is not a square.

\begin{theorem}
	\label{2impar.no.tiene.prop.Delta}
	If $k$ is odd, then $2k \notin \mathbb{N}(\Delta)$.
\end{theorem}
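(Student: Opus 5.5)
Write $n=2k$ with $k$ odd. The plan is a parity argument on the elements of $D^*_n$. Every pair of complementary divisors $(a,b)$ of $n$ with $ab=n$ has exactly one even member. The prime $2$ divides $n$ exactly once, so it goes entirely into $a$ or entirely into $b$. Hence $|a-b|$ is always odd, and every element of $D^*_n$ is odd. In particular $0\notin D^*_n$, which is consistent with $n$ not being a square.

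Next, by the definition of $D^+_n$, every element of $D^+_n$ is a sum $x+y$ with $x,y\in D^*_n-\{0\}$. Both summands are odd, so every element of $D^+_n$ is even. Therefore $D^*_n$ consists only of odd numbers and $D^+_n$ only of even numbers, so $D^*_n\cap D^+_n=\varnothing$. This is exactly the statement that $2k\notin\mathbb{N}(\Delta)$.

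Equivalently, in the language of \Cref{Delta.prop.caracterizacion}, one could reduce \eqref{ecuacion.Delta.prop} modulo $2$. Each term $\frac{n}{x}-x$ is odd, so the left-hand side is odd and the right-hand side is even, a contradiction. I would present the direct set-theoretic version above, since it needs no earlier results beyond the definitions.

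There is no real obstacle here. The only point to state carefully is that $v_2(n)=1$ forces exactly one of any two complementary divisors to be even. I would also remark afterwards that this shows every even element of $\mathbb{N}(\Delta)$ is divisible by $4$, and that $\mathbb{N}-\mathbb{N}(\Delta)$ is infinite.
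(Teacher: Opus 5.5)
Your proof is correct and matches the paper's argument: since $v_2(2k)=1$, complementary divisors have opposite parity, so every element of $D^*_{2k}$ is odd, every element of $D^+_{2k}$ is even, and the two sets are disjoint. Your closing remarks (even elements of $\mathbb{N}(\Delta)$ are multiples of $4$, and $\mathbb{N}-\mathbb{N}(\Delta)$ is infinite) are the same consequences the paper draws.
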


\begin{proof}
	If $2k = ab$, then $a$ and $b$ cannot have the same parity, since otherwise $2k$ would be odd or a multiple of 4. Consequently, each element of $D^*_{2k}$ is odd, making every member of $D^+_{2k}$ even. Thus, $D^*_{2k} \cap D^+_{2k} = \varnothing$.  
\end{proof}

 \Cref{2impar.no.tiene.prop.Delta} has two direct consequences worth highlighting. The first is that $|\mathbb{N} - \mathbb{N}(\Delta)| = \infty$. The second is that $\mathbb{N}(\Delta)$ contains no even square-free numbers. In other words, every even number in $\mathbb{N}(\Delta)$ is a multiple of 4.
 
 %%%%%%%%%%%%%%%%%%%%%%%%%%%%%%%%%%%%%%%%%%%%%%%%%%%%%%%%%%%%%
 
 \subsection{The dominating-prime obstruction}

The next result is of notable importance because it tells us that a number cannot satisfy the $\Delta$ condition if it is divisible by a ``sufficiently large" prime.

\begin{theorem}
	\label{p>2k-2_entonces.pk.no tiene.prop.Delta}
	If $k\in\mathbb{N}$ and $p$ is a prime $\geq k$, then $pk\notin\mathbb{N}(\Delta)$.
\end{theorem}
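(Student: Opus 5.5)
The idea is to argue by contradiction: the largeness of $p$ forces every component of a $\Delta$-triple to divide $k$. Then \eqref{ecuacion.Delta.prop} becomes a congruence modulo $p$ that cannot hold.

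\emph{Step 1: the components divide $k$.} Suppose $n=pk\in\mathbb{N}(\Delta)$ with $p\ge k$ prime. By \Cref{Delta.prop.caracterizacion}, pick a $\Delta$-triple $(x,y,z)$, so $1<x<y\le z<\sqrt{pk}\le p$, the last inequality because $k\le p$. A divisor $d$ of $pk$ divisible by $p$ satisfies $d\ge p$. Since $x,y,z<p$, none of them is divisible by $p$, and hence $x,y,z$ all divide $k$. Write $A=k/x$, $B=k/y$, $C=k/z$, which are integers. Then $n/x=pA$, $n/y=pB$, $n/z=pC$, and \eqref{ecuacion.Delta.prop} becomes
\[
p\,(B+C-A)\;=\;y+z-x.
\]

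\emph{Step 2: the right-hand side is a positive multiple of $p$ that is too small.} Since $x<y$, we have $y+z-x>z>0$, so $m=B+C-A\ge 1$ and therefore $y+z-x\ge p\ge k$. Now split on whether $z$ is a proper divisor of $k$.
\begin{itemize}
\item If $z<k$, then $z\le k/2$ and $y\le z\le k/2$. This gives $y+z-x<k\le p$, a contradiction.
\item If $z=k$, then $y+z-x<k+y\le 2k\le 2p$. Together with $y+z-x\ge p$ this forces $m=1$, i.e.\ $B+C-A=1$. But $C=k/z=1$, so $B=A$, i.e.\ $x=y$. This contradicts $x<y$.
\end{itemize}

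The only delicate point is the case $z=k$. There the crude size bound alone does not suffice, and one must first pin down $m=1$ and then read off $B=A$. Everything else follows from $\sqrt{pk}\le p$ and the fact that proper divisors of $k$ are at most $k/2$.
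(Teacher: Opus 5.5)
Your proof is correct. Step 1 matches the paper's reduction: both show $x,y,z\in D_k$ and rewrite \eqref{ecuacion.Delta.prop} as $p(B+C-A)=y+z-x$, so that $p\mid y+z-x$.

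The two arguments part ways in the endgame.

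\textbf{The paper's route.} It uses the uniform bound $0<y+z-x\le k+(k-2)<2p$ to force $y+z-x=p$. It substitutes this into \eqref{ecuacion2.Delta.prop} to get $k(xy+xz-yz)=xyz$, which is your relation $B+C-A=1$ with denominators cleared. Combining this with $p\ge k$ gives $(y+z-x)(xy+xz-yz)\ge xyz$. That inequality expands to $(y+z)(z-x)(x-y)\ge 0$, which contradicts $x<y$.

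\textbf{Your route.} You split on whether $z$ is a proper divisor of $k$. If it is, then $y\le z\le k/2$, so $0<y+z-x<k\le p$, which is incompatible with $p\mid y+z-x$. If $z=k$, the size bound $y+z-x<2k\le 2p$ forces $B+C-A=1$. Since $C=1$, this gives $A=B$, i.e.\ $x=y$.

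\textbf{Comparison.} You replace the cubic factorization with the elementary fact that proper divisors of $k$ are at most $k/2$. This also makes visible that only the case $z=k$, which is possible only when $k<p$, needs the exact equation. The paper's version avoids a case split and gets the contradiction in one stroke from the general identity \eqref{ecuacion2.Delta.prop}, at the cost of a less transparent algebraic manipulation.
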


\begin{proof}
	Suppose $k\in\mathbb{N}$, $p$ is a prime $\geq k$, but $pk\in\mathbb{N}(\Delta)$. Then, by  \Cref{Delta.prop.caracterizacion}, there exist $x, y, z\in D_{pk}=D_k\cup (pD_k)$ such that $2\leq x<y\leq z<\sqrt{pk}$ and
	\begin{equation}
		\label{eqpk1}
		\frac{pk}{x}-x=\frac{pk}{y}-y+\frac{pk}{z}-z.
	\end{equation}
	Since $p\geq k$, any element of $pD_k$ is at least $p\geq\sqrt{pk}$. Hence, $x, y, z\in D_k$. We rewrite \eqref{eqpk1} as
	\begin{equation}
		\label{eqpk2}
		p\left(\frac{k}{z}+\frac{k}{y}-\frac{k}{x}\right)=y+z-x.
	\end{equation}
	Since the left-hand side of \eqref{eqpk2} is a product of two integers, it follows that $p\mid(y+z-x)$. Therefore,
	\[ 0<y+z-x\leq k+(k-2)<2k\leq 2p, \]
	which forces $y+z-x=p$. Using \eqref{ecuacion2.Delta.prop} we get
	\begin{equation}
		\label{eqpk3}
		k(xy+xz-yz)=xyz.
	\end{equation}
	The hypothesis $p\geq k$, combined with $p=y+z-x$ and \eqref{eqpk3}, yields
	\[ (y+z-x)(xy+xz-yz)\geq xyz, \]
	which after elementary manipulations reduces to
	\[ (y+z)(z-x)(x-y)\geq 0. \]
	Since $y+z$ and $z-x$ are positive, we finally get $x\ge y$, contradicting the hypothesis $x<y$. Hence, $pk\notin\mathbb{N}(\Delta)$.
\end{proof}

%\begin{theorem}
%	\label{p>2k-2_entonces.pk.no tiene.prop.Delta}
%	If $k \in \mathbb{N}$ and $p$ is a prime $\geq 2k - 1$, then $pk \notin \mathbb{N}(\Delta)$.
%\end{theorem}
%
%\begin{proof}
%	Suppose $k \in \mathbb{N}$, $p$ is a prime $\geq 2k - 1$, but $pk \in \mathbb{N}(\Delta)$. Then, by  \Cref{Delta.prop.caracterizacion}, there exists a triple $T = \{x, y, z\} \subset D_{pk} = D_k \cup p D_k$ such that $2 \leq x < y \leq z < \sqrt{pk}$ and 
%	\begin{equation}
%		\label{eq19}
%		\frac{pk}{x} - x = \frac{pk}{y} - y + \frac{pk}{z} - z.
%	\end{equation}
%	Since $k \leq 2k - 1 \leq p$, we have $\sqrt{pk} \leq \sqrt{p^2} = p$. Thus, $x, y, z < p$, which shows that $T \subset D_k$. We rewrite \eqref{eq19} as:
%	\begin{equation}
%		\label{eq20}
%		p \left( \frac{k}{z} + \frac{k}{y} - \frac{k}{x} \right) = z + y - x
%	\end{equation}
%	Since the left-hand side of \eqref{eq20} is a product of two integers, it follows that $p | (z + y - x)$. Therefore, 
%	\[ 2k - 1 \leq p \leq z + (y - x) \leq k + (k - 2) < 2k - 1, \]
%	which is absurd. Hence, $pk \notin \mathbb{N}(\Delta)$.
%\end{proof}

 \Cref{p>2k-2_entonces.pk.no tiene.prop.Delta} is useful for generating numbers without the $\Delta$ property, since for each $k \in \mathbb{N}$, we have infinitely many choices for $p$. It is interesting to note what the contrapositive of this proposition tells us: if $pk \in \mathbb{N}(\Delta)$, then $p < k$. In other words, there are only finitely many prime multiples of a number that satisfy the $\Delta$ property. 
Another important observation is that \Cref{p>2k-2_entonces.pk.no tiene.prop.Delta} implicitly provides a recipe to construct, for each prime $k_1$, an infinite sequence $\ell(k_1)=(k_i)_{i\in\mathbb{N}}$ of square-free numbers that do not have the $\Delta$ property: set $k_{i+1}=p_i k_i$, where $p_i$ is any prime $\geq k_i$ coprime to $k_i$. For instance, starting from $k_1=2$ with successive choices $p_i=3,7,43,\ldots$, we obtain $\ell(2)=(2,6,42,1806,\ldots)$.
%Another important observation is that  \Cref{p>2k-2_entonces.pk.no tiene.prop.Delta} implicitly provides a recipe to construct, for each prime $k_1$, an infinite sequence $\ell(k_1) = (k_i)_{i \in \mathbb{N}}$ of square-free numbers that do not have the $\Delta$ property. Let's see how to do this with an example. We want to determine the first terms of $\ell(2)$. Since $k_1 = 2$, let $p_1 = 3 \geq 2k_1 - 1$. Then, $k_2 = p_1 k_1 = 6$. If $p_2 = 11 \geq 2k_2 - 1$, then $k_3 = p_2 k_2 = 66$. If $p_3 = 131 \geq 2k_3 - 1$, then $k_4 = p_3 k_3 = 8646$. And so on. In general, $k_{i+1} = p_i k_i$, where $p_i$ is a prime $\geq 2k_i - 1$.

%We now prove that no power of a prime has the $\Delta$ property. 
%A preliminary step is to verify that this claim holds for powers of order 1 and 2.

%\begin{lemma}
%	\label{p_p^2.no.tienen.prop.Delta}
%	If $k \in \{1, 2\}$ and $p$ is prime, then $p^k \notin \mathbb{N}(\Delta)$.
%\end{lemma}
%
%\begin{proof}
%	If $k = 1$, then $D^*_p = \{p - 1\}$ and $D^+_p = \{2(p - 1)\}$ are clearly disjoint. For $k = 2$, let $n = p^2$. Then, $D_n^* = \{n - 1, 0\}$ and $D_n^+ = \{2(n - 1)\}$. Hence, it is again evident that $D_n^* \cap D_n^+ = \varnothing$.
%\end{proof}
%
%As a mere curiosity, note that it can be proven that no prime satisfies the $\Delta$ condition simply by taking $k = 1$ in  \Cref{p>2k-2_entonces.pk.no tiene.prop.Delta}.  

The next result we prove, closely resembles  \Cref{p>2k-2_entonces.pk.no tiene.prop.Delta} in style. It states that a number cannot have the $\Delta$ property if its factorization contains exactly two primes and one of them is ``much larger" than the other.

\begin{theorem}
	\label{p^xq^y.no.tiene.prop.Delta.q>p^x}
	Let $x, y \in \mathbb{N}$, with $y \geq 2$, and let $p, q$ be distinct primes. If $n = p^x q^y$ and $q > p^x$, then $n \notin \mathbb{N}(\Delta)$.
\end{theorem}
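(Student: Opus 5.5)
Throughout, write the putative $\Delta$-triple as $(d_1,d_2,d_3)$ to avoid a clash with the exponents $x,y$, and argue by contradiction using \Cref{Delta.prop.caracterizacion}. Every divisor of $n$ has the form $p^iq^j$ with $0\le i\le x$ and $0\le j\le y$. Write $d_k=p^{a_k}q^{b_k}$ for $k=1,2,3$.

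\emph{Step 1 (ordering).} The hypothesis $q>p^x$ makes the divisors ordered lexicographically by the $q$-exponent first: $p^iq^j<p^{i'}q^{j'}$ whenever $j<j'$. Hence $d_1<d_2\le d_3$ forces $b_1\le b_2\le b_3$. Moreover, $d_k<\sqrt n=p^{x/2}q^{y/2}$ gives $b_k\le y/2$. If $b_k=y/2$ (so $y$ is even), it also gives $a_k<x/2$.

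\emph{Step 2 ($d_1,d_2$ have different $q$-exponents).} By \Cref{prop:y/x.cota.universal}, $1<d_2/d_1<2$. If $b_2=b_1$, then $d_2/d_1=p^{a_2-a_1}$ is an integer power of $p$ that is $>1$. Such a power is $\ge 2$, a contradiction. So $b_2\ge b_1+1$, and hence $b_3\ge b_1+1$ unless $b_3=b_1$. The latter is impossible, since $b_1\le b_2\le b_3$ and $b_2>b_1$. Thus $b_2,b_3\ge b:=b_1+1$.

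\emph{Step 3 (reduction modulo $q^{b}$).} Rewrite \eqref{ecuacion.Delta.prop} as
\[
d_1-d_2-d_3=\frac{n}{d_1}-\frac{n}{d_2}-\frac{n}{d_3}.
\]
The $q$-adic valuation of $n/d_k$ is $y-b_k$. We need $y-b_k\ge b_1+1$ for each $k$, which amounts to $b_k+b_1\le y-1$. For $k=1$ this follows from $b_1<b_2\le y/2$. For $k=2,3$ it follows from $b_k\le y/2$ and $b_1<y/2$, except in the boundary case $b_k+b_1=y$. That case would require $b_1=b_k=y/2$, contradicting $b_1<b_2$. Using $y\ge2$ to keep these inequalities nondegenerate, the right-hand side is divisible by $q^{b}$. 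Since $q^b\mid d_2,d_3$, we then get $q^{b_1+1}\mid d_1$, contradicting $v_q(d_1)=b_1$. This finishes the argument.

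\emph{Main obstacle and fallback.} The delicate point is the valuation bookkeeping at the boundary $b_k=y/2$, where the bound $d_k<\sqrt n$ must be used to rule out equalities. If a gap appears there, I would instead reduce modulo $q^{b_1+1}$ with a case split on whether $b_3=b_1$. In the case $b_3=b_1$, one gets $p^{a_1}\equiv p^{a_3}\pmod q$ with $0<|p^{a_3}-p^{a_1}|<p^x<q$, which is again a contradiction. Either way, the essential inputs are the lexicographic ordering forced by $q>p^x$ and \Cref{prop:y/x.cota.universal}.
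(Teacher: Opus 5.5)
Your proof is correct, and it takes a different, shorter route than the paper.

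\textbf{The paper's route.} The paper first assumes $n$ is $\Delta$-primitive. It proves the same lexicographic ordering $b\le d\le f$ of the $q$-exponents. It then uses primitivity as a descent device: if all three $q$-exponents are positive, dividing the identity by $q$ gives $p^xq^{y-2}\in\mathbb{N}(\Delta)$, contradicting primitivity. Next it rules out each way an exponent can vanish with a separate argument: $f=0$ via $q^y\le 2q$, $d=0$ via a sign contradiction, and $b=0<d\le f$ via reduction mod $q$. Finally, it handles non-primitive $n=\alpha^2m$ by applying the first part to the primitive part $m=p^{x'}q^{y'}$.

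\textbf{Your route.} You replace the $f=0$ and $d=0$ cases with a single appeal to \Cref{prop:y/x.cota.universal}: equal $q$-exponents would make $d_2/d_1$ a power of $p$, hence $\ge 2$. You replace the primitivity descent with one congruence modulo $q^{b_1+1}$. That congruence is the paper's final ``mod $q$'' step after implicitly cancelling the common factor $q^{b_1}$.

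\textbf{What each buys.} Your version is self-contained and needs no primitive/non-primitive split. In particular, it avoids the step where the paper applies its first part to $m=p^{x'}q^{y'}$, although that part was carried out under $y\ge 2$. The cases $y'\in\{0,1\}$ actually need the prime-power theorem and the dominating-prime obstruction. Your argument also shows the hypothesis $y\ge 2$ is superfluous: for $y=1$ all $b_k=0$, so Step 2 already gives a contradiction. The paper's version, in turn, fits the primitivity-descent template it reuses for prime powers and for $p^kq$.

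\textbf{Minor cleanups.}
\begin{itemize}
\item Step 3 has no boundary case: $b_k\le y/2$ and $b_1<b_2\le y/2$ give $b_k+b_1<y$ directly.
\item Your phrase ``using $y\ge2$'' does no work in the argument.
\item The fallback paragraph is unnecessary, since $b_3=b_1$ is already excluded by Step 2.
\end{itemize}
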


\begin{proof}
	We proceed by contradiction, first assuming that $n$ is $\Delta$-primitive. Then, there exist non-negative integers $a, c, e \leq x$ and $b, d, f \leq y$ such that 
	\begin{equation}
		\frac{n}{p^a q^b} - p^a q^b = \frac{n}{p^c q^d} - p^c q^d + \frac{n}{p^e q^f} - p^e q^f,
		\label{eq11}
	\end{equation}  
	where 
	\begin{equation*}
		1 < p^a q^b < p^c q^d \leq p^e q^f \leq \max \{ \alpha \in D_n : \alpha < \sqrt{n} \} = p^z q^{\lfloor y/2 \rfloor},
	\end{equation*}
	for some $z \leq x$. Note that $f \leq \lfloor y/2 \rfloor$. Indeed, if $f > \lfloor y/2 \rfloor$, we would have $p^e q^{f - \lfloor y/2 \rfloor} \leq p^z < q$, which is absurd. 
	
	We claim that $b \leq d \leq f$. Suppose that $b > d$. Since $p^a q^b < p^c q^d$, we have $p^{c - a} > q^{b - d}$. However, $a, c \leq x$ implies $p^{c - a} \leq p^x < q \leq q^{b - d}$, a contradiction. Similarly, if $d > f$, then $p^c q^d \leq p^e q^f$ would force $p^{e - c} \geq q^{d - f}$, but $p^{e - c} \leq p^x < q \leq q^{d - f}$, a contradiction. Hence $d \leq f$.
	%Using the same argument, we can also see that $b \leq d \leq f$. 
	
	Furthermore, since $\lfloor y/2 \rfloor \leq y/2 < y$, it follows that $y - b, y - d$, and $y - f$ are all positive. Now, we rewrite equality \eqref{eq11} as
	\begin{equation}
		p^{x - a} q^{y - b} - p^a q^b = p^{x - c} q^{y - d} - p^c q^d + p^{x - e} q^{y - f} - p^e q^f.
		\label{eq12}
	\end{equation} 
	
	If $b, d, f > 0$, then we can divide both sides of \eqref{eq12} by $q$, obtaining that $m = p^x q^{y - 2} \in \mathbb{N}(\Delta)$ (recall that $y - 2 \geq 0$ by hypothesis). Since $n = q^2 m$ is $\Delta$-primitive, it follows that $q = 1$, a contradiction. Hence, $0 \in \{b, d, f\}$. 
	
	If $f = 0$, then $b = d = 0$ and \eqref{eq12} becomes
	\[ q^y (p^{x - a} - p^{x - c} - p^{x - e}) = p^a - p^c - p^e. \]
	Thus, $q^y$ divides $|p^a - p^c - p^e|$. Since $p^a q^0 < p^c q^0$, it follows that $|p^a - p^c - p^e| = (p^c - p^a) + p^e \leq p^c + p^e \leq 2q$. But then $q^y \leq 2q$, which is absurd. Therefore, it must be that $f > 0$.
	
	If $d = 0$, then $b = 0$ as well. Since $p^a q^0 < p^c q^0$, it follows that $a < c$ and, thus, $x - c < x - a$, so $p^{x - c} < p^{x - a}$. With these observations in mind, we rewrite \eqref{eq12} appropriately, obtaining the following contradiction:
	\[ 0 < q^{y - f} [q^f (p^{x - a} - p^{x - c}) - p^{x - e}] = -p^e q^f - (p^c - p^a) < 0. \]
	Therefore, $d > 0$. 
	
	Since $0 \in \{b, d, f\}$ but $d, f > 0$, necessarily $b = 0$. But then, taking congruences modulo $q$ in \eqref{eq12}, we deduce that $p^a \equiv 0 \pmod q$, which is impossible since $p$ and $q$ are distinct primes. Finally, we can conclude that $n$ cannot be $\Delta$-primitive.
	
	If $n \in \mathbb{N}(\Delta)$ but is not $\Delta$-primitive, then we can write $n = \alpha^2 m$ for some $\alpha \geq 2$ and some $\Delta$-primitive $m$. Note that $m = p^{x'} q^{y'}$, where $x' \leq x$, $y' \leq y$ but $(x', y') \neq (x, y)$. Since $q > p^{x'}$ and $m$ is $\Delta$-primitive, it follows from the first part of the proof that $m \notin \mathbb{N}(\Delta)$, which is absurd.   
\end{proof}

\begin{corollary}
	\label{contrarr.p^xq^y.no.tiene.prop.Delta.q>p^x}
	Let $x, y \in \mathbb{N}$, with $y \geq 2$, and let $p, q$ be distinct primes. For each pair $(p, x)$, there are only finitely many primes $q$ such that $p^x q^y \in \mathbb{N}(\Delta)$.
\end{corollary}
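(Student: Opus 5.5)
This is the contrapositive of \Cref{p^xq^y.no.tiene.prop.Delta.q>p^x}, so I would argue it that way. Fix the prime $p$, the exponent $x\in\mathbb{N}$ and the exponent $y\ge 2$. I will show that every prime $q$ with $p^x q^y\in\mathbb{N}(\Delta)$ satisfies $q\le p^x$, and also $q\ne p$ since the primes are distinct. The set of primes in $[2,p^x]$ is finite, which gives the claim.

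The argument proceeds in three steps.

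\begin{enumerate}
\item Suppose $q$ is a prime with $q\ne p$ and $n=p^xq^y\in\mathbb{N}(\Delta)$, and suppose for contradiction that $q>p^x$.
\item All hypotheses of \Cref{p^xq^y.no.tiene.prop.Delta.q>p^x} then hold: $x,y\in\mathbb{N}$, $y\ge 2$, $p,q$ distinct primes, and $q>p^x$. That theorem gives $n\notin\mathbb{N}(\Delta)$, which is a contradiction. Hence $q\le p^x$.
\item The number of admissible $q$ is therefore at most $\pi(p^x)$, the number of primes up to $p^x$. This bound depends only on the pair $(p,x)$.
\end{enumerate}

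There is a reading of the statement in which $y$ is also allowed to vary with $q$. The bound $q\le p^x$ does not depend on $y$, so it covers that case too: even the set of primes $q$ with $p^xq^y\in\mathbb{N}(\Delta)$ for some $y\ge 2$ is finite, still contained in the primes up to $p^x$.

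I expect no real obstacle, since the only substantive ingredient is the earlier theorem. The one point to check is that the bound is uniform in $y$, and step 3 confirms it is.
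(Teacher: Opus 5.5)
Your proof is correct and is the paper's argument: apply \Cref{p^xq^y.no.tiene.prop.Delta.q>p^x} contrapositively to get $q\le p^x$ (the paper writes $q<p^x$, which is the same here since $q\neq p$), so only the finitely many primes up to $p^x$ can occur. Your remark that this bound does not depend on $y$ is a valid small strengthening.
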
 

\begin{proof}
	If $p^x q^y \in \mathbb{N}(\Delta)$, then $q < p^x$, by  \Cref{p^xq^y.no.tiene.prop.Delta.q>p^x}. Thus, fixing $p$ and $x$, we have only finitely many options for $q$.
\end{proof}

As an application of  \Cref{contrarr.p^xq^y.no.tiene.prop.Delta.q>p^x}, consider a number $n$ of the form $9q^y$, with $y \geq 2$. If $n \in \mathbb{N}(\Delta)$, then it must be that $q < 9$, i.e., $q \in \{2, 5, 7\}$.

%Over the course of the next three lemmas, we prove that for every pair of primes $p, q$, no number of the form $p^2 q^2, p q^2$, or $p q$ satisfies the $\Delta$ condition.

%%%%%%%%%%%%%%%%%%%%%%%%%%%%%%%%%%%%%%%%%%%%%%%%%%%%%%%%%%%%%%%

\subsection{Integers of the form $p^k$, $pq$, $pq^2$ and $p^2q^2$}

\begin{theorem}
	\label{p^k.no.tiene.prop.Delta}
	If $p$ is prime and $k\in\mathbb{N}$, then $p^k \notin \mathbb{N}(\Delta)$. 
\end{theorem}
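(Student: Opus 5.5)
We argue by contradiction, using the characterization of \Cref{Delta.prop.caracterizacion}. Suppose $p^k\in\mathbb{N}(\Delta)$, and take a $\Delta$-triple $(x,y,z)$ for $n=p^k$. Every divisor of $p^k$ is a power of $p$, so we write $x=p^a$, $y=p^b$, $z=p^c$ with $1\le a<b\le c$. The condition $z<\sqrt{n}$ gives $2c<k$.

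The quickest route is \Cref{prop:y/x.cota.universal}. It says that $x\nmid y$ for every $\Delta$-triple. Here, however, $a<b$ forces $p^a\mid p^b$, that is, $x\mid y$, which is an immediate contradiction. So no $\Delta$-triple exists, and $p^k\notin\mathbb{N}(\Delta)$.

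As a sanity check independent of that proposition, we can also argue directly from \eqref{ecuacion.Delta.prop}. Write it as
\[
p^{k-a}-p^a=p^{k-b}-p^b+p^{k-c}-p^c .
\]
The three ``large'' terms satisfy $p^{k-a}\ge p\cdot p^{k-b}$ and $p^{k-b}\ge p^{k-c}$, because $b\ge a+1$ and $c\ge b$. The small terms are negligible, since $2c<k$ makes $p^c<p^{k-c}$. It follows that
\[
p^{k-b}+p^{k-c}\le 2p^{k-b}\le p^{k-a}\quad\text{when } p\ge 2 .
\]
Hence we need only compare corrections. Note that the equation is equivalent to $p^{k-a}-p^{k-b}-p^{k-c}=p^a-p^b-p^c$. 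Its left-hand side is $\ge 0$, while its right-hand side is $<0$ because $p^b>p^a$. This again is a contradiction.

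There is no real obstacle. The only point needing care is that the divisor chain of a prime power is totally ordered by divisibility, so that \Cref{prop:y/x.cota.universal} applies directly.
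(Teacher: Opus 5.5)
Your proof is correct, and it takes a genuinely different route from the paper. The paper argues by descent. It first disposes of $k\in\{1,2\}$ with the dominating-prime obstruction ($pk\notin\mathbb{N}(\Delta)$ for primes $p\ge k$). For $k\ge 3$ it assumes $p^k$ is $\Delta$-primitive, excludes a zero exponent (so that $n-1$ never occurs in the identity), and divides the identity by $p$. This gives $p^{k-2}\in\mathbb{N}(\Delta)$, contradicting primitivity. The non-primitive case is then reduced to the primitive one through the square-times-$\Delta$-primitive decomposition.

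You instead note that the divisors of $p^k$ form a divisibility chain, so $x<y$ forces $x\mid y$, which \Cref{prop:y/x.cota.universal} forbids. Your direct ``sanity check'' is just that proposition's inequality $xy+xz>yz$ specialized to $y\ge px\ge 2x$ and $z\ge y$.

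Your route is shorter and needs no case split on $k$. It also avoids the primitivity machinery, the bound on $\max(D_n^*\cap D_n^+)$, and the dominating-prime theorem. It even proves slightly more: any $n$ with no two divisors $1<x<y<\sqrt{n}$ satisfying $y<2x$ lies outside $\mathbb{N}(\Delta)$. The paper's descent template, on the other hand, is the one it reuses for $p^xq^y$ and $p^kq$. There the divisors are no longer totally ordered by divisibility (e.g.\ $(2,3,3)$ for $24$), so your argument does not apply directly.

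One cosmetic point: the remark that $2c<k$ gives $p^c<p^{k-c}$ is never used in your computation and can be dropped.
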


\begin{proof}
	First, observe that if $n = p^k$, then:
	\begin{equation*}
		D^*_n -\{0\} = \{p^{k - i} - p^i : 0 \leq 2i < k\}.
	\end{equation*}
	If $k\in\{1,2\}$, the result follows immediately from \Cref{p>2k-2_entonces.pk.no tiene.prop.Delta}. Thus, we can assume $k \geq 3$. We proceed by contradiction. Suppose $n$ is $\Delta$-primitive. By \eqref{ecuacion.Delta.prop}, we have 
	\begin{equation}
		p^{k - z} - p^z + p^{k - y} - p^y = p^{k - x} - p^x,
		\label{eq13}    
	\end{equation}
	for certain non-negative integers $x, y, z$ such that $2x, 2y, 2z < k$. Then, $k-x, k-y, k-z > 0$. If $0 \in \{x, y, z\}$, $n - 1$ would participate in \eqref{eq13}, contradicting  \Cref{cotasup.D^*capD^+}. Thus, $x, y, z > 0$, and we can divide both sides of \eqref{eq13} by $p$. But then $n = p^2 m$, where $2 \leq m = p^{k - 2} \in \mathbb{N}(\Delta)$ (recall that $k \geq 3$), contradicting the primitivity of $n$. Therefore, we conclude that $n$ cannot be $\Delta$-primitive.
	
	If $n \in \mathbb{N}(\Delta)$ but is not $\Delta$-primitive ($k \geq 3$), then $n = \alpha^2 m$, where $\alpha \geq 2$ and $m$ is $\Delta$-primitive. But $m = p^h \in \mathbb{N}(\Delta)$, for some $h \in [k - 2]$, which contradicts what was shown in the primitive-case.   
\end{proof}

\begin{lemma}
	\label{p^2q^2.no.tiene.prop.Delta}
	If $p$ and $q$ are primes, then $p^2q^2\notin\mathbb{N}(\Delta)$.
\end{lemma}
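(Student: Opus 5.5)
We would prove the statement by direct case analysis: list the few divisors of $n=p^2q^2$ lying strictly between $1$ and $\sqrt{n}$, and show that no admissible $\Delta$-triple (in the sense of \Cref{Delta.prop.caracterizacion}) can be built from them.

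\emph{Reduction and candidate divisors.} If $p=q$, then $n=p^4$ and \Cref{p^k.no.tiene.prop.Delta} already gives $n\notin\mathbb{N}(\Delta)$. So assume $p<q$, which gives $\sqrt{n}=pq$. The divisors of $n$ are $p^iq^j$ with $0\le i,j\le 2$. Those in the open interval $(1,pq)$ are exactly $p$, $p^2$ and $q$, since $p^2<pq$, $q<pq$, and every other divisor is $1$ or at least $pq$. By \Cref{Delta.prop.caracterizacion}, any $\Delta$-triple $(x,y,z)$ has all three components in $\{p,p^2,q\}$ with $x<y\le z$.

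\emph{Pruning with the ratio bound.} By \Cref{prop:y/x.cota.universal} we need $1<y/x<2$, so in particular $x\nmid y$. This rules out $(x,y)=(p,p^2)$. Since $y\le z$ and all components lie in $\{p,p^2,q\}$, only four configurations survive:
\begin{enumerate}[(i)]
\item $(p,q,q)$;
\item $(p,q,p^2)$, with $q<p^2$;
\item $(q,p^2,p^2)$, with $q<p^2<2q$;
\item $(p^2,q,q)$, with $p^2<q<2p^2$.
\end{enumerate}

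\emph{Killing each case with \eqref{ecuacion2.Delta.prop}.} Substitute each triple into \eqref{ecuacion2.Delta.prop}, using the duplicated form $(2x-y)n=xy(2y-x)$ when $y=z$. Cancel common factors and reduce modulo $q$.
\begin{enumerate}[(i)]
\item The equation becomes $(2p-q)pq=2q-p$. Hence $q\mid p$, which is impossible.
\item After cancelling $p^3q$, the equation becomes $q(q+p^2-pq)=p^2+q-p$. Hence $q\mid p(p-1)$. But $q>p>p-1\ge 1$ and $q\nmid p$, a contradiction.
\item The equation becomes $q(2q-p^2)=2p^2-q$. Hence $q\mid 2p^2$. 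This forces $q=2$, contradicting $q>p\ge 2$.
\item The equation becomes $q(2p^2-q)=2q-p^2$. Hence $q\mid p^2$, which is impossible.
\end{enumerate}

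Every case yields a contradiction, so $n$ has no $\Delta$-triple and $p^2q^2\notin\mathbb{N}(\Delta)$.

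The only delicate step is making the enumeration of divisors below $\sqrt{n}$ exhaustive and correctly ordered, especially in configurations (ii)--(iv), where the relative order of $p^2$ and $q$ is not fixed. The divisibility eliminations afterwards are routine. As an optional cross-check, \Cref{prop:bound-z-lcm} can be applied to each configuration.
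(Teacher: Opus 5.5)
Your proof is correct and follows the same route as the paper: list the divisors of $n$ strictly between $1$ and $\sqrt{n}=pq$, enumerate the candidate $\Delta$-triples, and eliminate each one by reducing the defining equation modulo a prime. The differences are minor. The paper first invokes \Cref{p^xq^y.no.tiene.prop.Delta.q>p^x} to fix the order $q<p<q^2$ (in its labeling), and it eliminates the analogue of $(p,p^2,p^2)$ by direct computation. You instead keep both orderings of $p^2$ and $q$, paying with the extra case $(p^2,q,q)$, and you discard $(p,p^2,\cdot)$ via \Cref{prop:y/x.cota.universal}; this makes your argument slightly more self-contained.
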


\begin{proof}
	The case $p=q$ follows from  \Cref{p^k.no.tiene.prop.Delta}. Assume $q<p$, and let $n=p^2q^2$. By  \Cref{p^xq^y.no.tiene.prop.Delta.q>p^x}, we may also assume $q<p<q^2<p^2$.
	
	Under these hypotheses, $\sqrt{n}=pq$, and since $p^2>pq$, the divisors of $n$ strictly smaller than $\sqrt{n}$ are exactly $\{1,q,p,q^2\}$. By  \Cref{Delta.prop.caracterizacion}, any $\Delta$-triple $(x,y,z)$ for $n$ has $\{x,y,z\}\subseteq\{q,p,q^2\}$, leaving the four candidates
	\[ (q,p,p),\quad (q,q^2,q^2),\quad (p,q^2,q^2),\quad (q,p,q^2). \]
	We discard each in turn, writing \eqref{ecuacion.Delta.prop} explicitly for each tuple.
	
	\smallskip
	$(q,p,p)$: yields $q(p^2-1)=2(pq^2-p)$. Reducing modulo $p$ gives $-q\equiv 0\pmod{p}$, forcing $p=q$, a contradiction.
	
	\smallskip
	$(q,q^2,q^2)$: yields $q(p^2-1)=2(p^2-q^2)$. Modulo $q$: $0\equiv 2p^2\pmod{q}$, so $q=2$ (as $\gcd(p,q)=1$). Then $2(p^2-1)=2(p^2-4)$, i.e.\ $-1=-4$, absurd.
	
	\smallskip
	$(p,q^2,q^2)$: yields $p(q^2-1)=2(p^2-q^2)$. Modulo $p$: $0\equiv -2q^2\pmod{p}$, so $p=2$, contradicting $q<p$.
	
	\smallskip
	$(q,p,q^2)$: yields $q(p^2-1)=p(q^2-1)+(p^2-q^2)$, which is equivalent to $(p-1)(p-q)(q-1)=0$. Since $p,q$ are distinct primes, all three factors are nonzero, a contradiction.
\end{proof}

\begin{lemma}
	If $p$ and $q$ are primes, then $p q^2 \notin \mathbb{N}(\Delta)$.
	\label{pq^2.no.tiene.prop.Delta}
\end{lemma}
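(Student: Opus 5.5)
We split according to the relative size of $p$ and $q$, and in every case but one we reduce to an obstruction already proved.

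\emph{Case $p=q$.} Then $pq^2=p^3$, and \Cref{p^k.no.tiene.prop.Delta} gives $p^3\notin\mathbb{N}(\Delta)$.

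\emph{Case $p<q$.} Apply \Cref{p^xq^y.no.tiene.prop.Delta.q>p^x} with exponents $x=1$ and $y=2$. The hypotheses $y\ge 2$ and $q>p=p^x$ hold, so $pq^2\notin\mathbb{N}(\Delta)$ at once.

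\emph{Case $p>q$.} Write $n=pk$ with $k=q^2$.
\begin{itemize}
\item If $p\ge q^2$, then \Cref{p>2k-2_entonces.pk.no tiene.prop.Delta} applies directly.
\item So assume $q<p<q^2$; this is the only case needing a computation.
\end{itemize}

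\emph{Ordering the divisors.} The divisors of $n$ are $1,q,p,q^2,pq,pq^2$, and $\sqrt{n}=q\sqrt{p}$.
\begin{itemize}
\item $p<q\sqrt p$ holds because it is equivalent to $p<q^2$.
\item $q^2>q\sqrt p$ holds because it is equivalent to $q^2>p$.
\end{itemize}
Hence the divisors below $\sqrt n$ are exactly $\{1,q,p\}$. By \Cref{Delta.prop.caracterizacion}, a $\Delta$-triple $(x,y,z)$ must satisfy $1<x<y\le z<\sqrt n$ with entries in $\{q,p\}$. The only candidate is therefore $(q,p,p)$.

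\emph{Ruling out $(q,p,p)$.} Substituting into \eqref{ecuacion.Delta.prop} gives
\[
pq-q \;=\; 2(q^2-p),
\quad\text{that is,}\quad
p(q+2)=q(2q+1).
\]
Since $q\nmid p$, this forces $q\mid q+2$, so $q\mid 2$ and $q=2$. The equation then reads $4p=10$, which has no integer solution. So no $\Delta$-triple exists in this subcase either.

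The main point requiring care is the ordering of divisors in the subcase $q<p<q^2$. Once that ordering is fixed, the argument is a single modular check.
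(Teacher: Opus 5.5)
Your proof is correct, but it is organized differently from the paper's. The paper handles the prime $2$ separately: $p=2$ via \Cref{2impar.no.tiene.prop.Delta}, $n=12$ by inspection, and $q=2$, $p\ge 5$ via \Cref{p>2k-2_entonces.pk.no tiene.prop.Delta}. For distinct odd primes it then works directly with the three-element set $D_n^*=\{n-1,\ pq-q,\ |q^2-p|\}$. The element $n-1$ is excluded by \Cref{cotasup.D^*capD^+}, and the two remaining relations, $2(pq-q)=|q^2-p|$ and $2|q^2-p|=pq-q$, both force $q\mid 2p$. That argument never compares $p$ with $q$ or $q^2$, and it does not use \Cref{p^xq^y.no.tiene.prop.Delta.q>p^x}.

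You instead split by size. The case $p<q$ goes to \Cref{p^xq^y.no.tiene.prop.Delta.q>p^x}, and $p>q^2$ goes to \Cref{p>2k-2_entonces.pk.no tiene.prop.Delta}. In the window $q<p<q^2$, the ordering of divisors leaves the single candidate $(q,p,p)$, whose equation is exactly the paper's relation $2(q^2-p)=pq-q$. Your reduction mod $q$ also absorbs $q=2$, so $n=12$ needs no separate treatment. This is the template the paper itself uses for \Cref{p^2q^2.no.tiene.prop.Delta}.

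The price is a dependence on \Cref{p^xq^y.no.tiene.prop.Delta.q>p^x}, a much heavier descent argument. Its non-primitive step, as written, only treats quotients with $q$-exponent at least $2$. In your instance the only quotient is $m=p$, so nothing breaks.

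You can drop that dependence entirely. For $p<q$ the divisors below $\sqrt n$ are $1,p,q$, so the only candidate is $(p,q,q)$. Its equation is $q^2-p=2(pq-q)$, and reducing mod $q$ gives $q\mid p$, a contradiction. With this, your ordering argument settles every case $p<q^2$ uniformly and becomes as self-contained as the paper's proof.
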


\begin{proof}
	Let $n = p q^2$. If $p = q$, we use  \Cref{p^k.no.tiene.prop.Delta}. Then, suppose that $p\neq q$. If $p=2$, we apply \Cref{2impar.no.tiene.prop.Delta}. If $(p,q)=(3,2)$, then $n=12\notin\mathbb{N}(\Delta)$. If $q=2$ and $p\ge 5$, then \Cref{p>2k-2_entonces.pk.no tiene.prop.Delta} applies. Therefore, assume from now on that $p$ and $q$ are odd. 
	
	Since $D_n^* = \{n - 1, p q - q, |q^2 - p|\}$, we have 
	\[ D_n^+ = \{p q - q + |q^2 - p|\} \cup (D_n^* + (n - 1)) \cup 2 D_n^*. \]
	Obviously, $p q - q + |q^2 - p|\notin \{2(pq-q),2|q^2-p|\}$ and $2d\neq d$ for all $d\in D_n^*$. Moreover, 
	\[ \{p q - q + |q^2 - p|, 2(pq-q),2|q^2-p|\}\cap\{n-1\}=\varnothing, \]
	by  \Cref{cotasup.D^*capD^+}. If $2(pq-q)=|q^2-p|$ or $2|q^2-p|=pq-q$, then $2|q^2-p|\equiv 0\pmod{q}$, i.e., $2p\equiv 0\pmod{q}$: this is impossible because $p$ and $q$ are distinct odd primes. Hence, $D^*_n \cap D^+_n = \varnothing$. 
\end{proof}

\begin{lemma}
	\label{pq.no.tiene.prop.Delta}
	If $p$ and $q$ are primes, then $p q \notin \mathbb{N}(\Delta)$.
\end{lemma}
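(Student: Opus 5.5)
The quickest route is to reduce to \Cref{p>2k-2_entonces.pk.no tiene.prop.Delta} and \Cref{p^k.no.tiene.prop.Delta}. If $p=q$, then $pq=p^2$, and \Cref{p^k.no.tiene.prop.Delta} applies directly. Otherwise assume without loss of generality that $q<p$. Set $k=q$ and use the prime $p$. Since $p>q=k$, the hypothesis $p\geq k$ of \Cref{p>2k-2_entonces.pk.no tiene.prop.Delta} holds, so $pk=pq\notin\mathbb{N}(\Delta)$. This essentially finishes the proof.

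As a sanity check, and as a self-contained alternative, I would also record the direct count, which is almost immediate. For $n=pq$ with $q<p$ distinct primes, $D_n^*=\{n-1,\;p-q\}$. Hence $D_n^*-\{0\}$ has only these two elements, and
\[
D_n^+=\{2(n-1),\; n-1+p-q,\; 2(p-q)\}.
\]
The first two sums exceed $n-1$, so they cannot lie in $D_n^*$. The third, $2(p-q)$, differs from $p-q$, and it cannot equal $n-1$ by \Cref{cotasup.D^*capD^+}, since $n-1>\frac{2n}{3}-6$. (One can also argue by parity: $2(p-q)=pq-1$ is impossible when both primes are odd, because then $pq-1$ is even but $p-q$ is too small.) Hence $D_n^*\cap D_n^+=\varnothing$.

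There is no real obstacle here. The only point requiring care is the degenerate case $p=q$, since $pq$ is then a prime square; it is covered by \Cref{p^k.no.tiene.prop.Delta}, or equally by \Cref{p>2k-2_entonces.pk.no tiene.prop.Delta} with $k=p$.
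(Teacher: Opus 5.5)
Your proof is correct and matches the paper's own argument: the case $p=q$ goes through the prime-power theorem, and the case $p\neq q$ follows from the dominating-prime theorem with $k$ equal to the smaller prime. Your direct-count check is also fine, except that the parenthetical ``parity'' remark is really a size argument, namely $2(p-q)\le 2p-4<2p-1\le pq-1$.
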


\begin{proof}
	If $p = q$, we use \Cref{p^k.no.tiene.prop.Delta}. If $p \neq q$, it immediately follows from \Cref{p>2k-2_entonces.pk.no tiene.prop.Delta}.
%	we can assume without loss of generality that $p < q$. Then, with $n = p q$, $D^*_n = \{n - 1, q - p\}$ and
%	\begin{equation*}
%		D^+_n = \{2(n - 1), n - 1 + q - p, 2(q - p)\}.    
%	\end{equation*}
%	At this point, using the usual arguments, it is quickly verified that $D^*_n \cap D^+_n = \varnothing$.
\end{proof}

\begin{theorem}
	\label{pq_pq^2_p^2q^2.no.tiene.prop.Delta}
	If $p$ and $q$ are prime numbers, then
	\[ \{p q, p^2q, pq^2, p^2 q^2\} \subset \mathbb{N} - \mathbb{N}(\Delta). \]
\end{theorem}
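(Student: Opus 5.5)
The statement collects four earlier results, so the proof is an assembly; the one case without its own lemma is $p^2q$. The cases $pq$, $pq^2$ and $p^2q^2$ are exactly \Cref{pq.no.tiene.prop.Delta}, \Cref{pq^2.no.tiene.prop.Delta} and \Cref{p^2q^2.no.tiene.prop.Delta}. Since $p$ and $q$ are arbitrary primes, $p^2q$ is just $q'p'^2$ with $p'=q$ and $q'=p$. Hence $p^2q\notin\mathbb{N}(\Delta)$ also follows from \Cref{pq^2.no.tiene.prop.Delta}, applied to the swapped pair of primes.

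For the write-up I will still check that \Cref{pq^2.no.tiene.prop.Delta} really is symmetric in this sense. Its hypothesis is only ``$p,q$ primes'', with no ordering between them. In its proof:
\begin{itemize}
\item the case $p=q$ is handled by \Cref{p^k.no.tiene.prop.Delta};
\item the case $p=2$ is handled by \Cref{2impar.no.tiene.prop.Delta};
\item the case $q=2$ is handled either by direct computation ($n=12$) or by \Cref{p>2k-2_entonces.pk.no tiene.prop.Delta} with $k=4\le p$;
\item the case of distinct odd primes is handled by a divisibility-by-$q$ argument that uses no ordering.
\end{itemize}
So the lemma covers every number of the form (prime)$\cdot$(prime)$^2$, including those written as $p^2q$.

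The proof is therefore a single paragraph. Let $p,q$ be primes. Then $pq\notin\mathbb{N}(\Delta)$ by \Cref{pq.no.tiene.prop.Delta}, and $pq^2\notin\mathbb{N}(\Delta)$ by \Cref{pq^2.no.tiene.prop.Delta}. Next, $p^2q=q\cdot p^2\notin\mathbb{N}(\Delta)$ by the same lemma with the roles of the primes exchanged. Finally, $p^2q^2\notin\mathbb{N}(\Delta)$ by \Cref{p^2q^2.no.tiene.prop.Delta}. I expect no genuine obstacle. The only point needing care is to state explicitly that the swap of primes is legitimate, since \Cref{pq^2.no.tiene.prop.Delta} places no condition on the relative sizes of $p$ and $q$.
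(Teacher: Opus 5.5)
Your proposal is correct and matches the paper's proof, which likewise just cites \Cref{pq.no.tiene.prop.Delta,pq^2.no.tiene.prop.Delta,p^2q^2.no.tiene.prop.Delta}. The only difference is that you state explicitly that $p^2q$ is covered by applying \Cref{pq^2.no.tiene.prop.Delta} with the primes exchanged. That swap is legitimate because the lemma imposes no ordering on $p,q$, and the paper leaves it implicit.
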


\begin{proof}
	This is the content of  \Cref{p^2q^2.no.tiene.prop.Delta,pq^2.no.tiene.prop.Delta,pq.no.tiene.prop.Delta}.
\end{proof}

%The bound $\tau(n)\geq 8$ admits a natural reformulation via differences of squares. For each divisor $d$ of $n$ with $d\leq\sqrt{n}$, the pair
%\[
%A_d=d+\tfrac{n}{d},\qquad B_d=\tfrac{n}{d}-d,
%\]
%satisfies $A_d^2-B_d^2=4n$ with $A_d\equiv B_d\pmod 2$, and conversely every such representation arises in this way. Writing $r_\ominus(N)$ for the number of representations of $N$ as $A^2-B^2$ with $A>B\geq 0$ and $A\equiv B\pmod 2$, one has the elementary identity
%\[
%r_\ominus(4n)\;=\;\left\lceil\frac{\tau(n)}{2}\right\rceil.
%\]
%Hence  \Cref{cor:tau-cota} can be paraphrased: every $n\in\mathbb{N}(\Delta)$ admits at least four representations of $4n$ as a difference of squares. The set $D^*_n$ coincides with $\{B_d:d\mid n,\,d\leq\sqrt{n}\}$, and the $\Delta$ condition translates into the existence of three such $B$'s satisfying $B_1=B_2+B_3$ with $B_1\neq n-1$. In this language, $\mathbb{N}(\Delta)$ is the set of integers $n$ for which the projection of the integer points of the hyperbola $uv=n$ onto the axis $v-u$ contains a non-trivial additive triple.

%%%%%%%%%%%%%%%%%%%%%%%%%%%%%%%%%%%%%%%%%%%%%%%%%%%%%%%%%%%%%%

\subsection{Integers of the form $p^kq$}

Having ruled the products of two primes with exponents $\le 2$, we now turn to the family $p^kq$ ($p,q$ primes, $k\in\mathbb{N}$), where allowing $k$ to grow yields not an obstruction but a complete classification. We start with a technical lemma we need later to prove that 24 and 40 are the only $\Delta$-primitives of the form $p^k q$.

\begin{lemma}
	\label{2^i-1q-2^j-1=2^i+j-q}
	Let $i, j \in \mathbb{N}$, and let $q$ be an odd number $\ge 3$. If
	\begin{equation}
		|2^{i - 1} q - 2^{j - 1}| = |2^{i + j} - q|,
		\label{eq9}
	\end{equation}
	then $(i, j, q) \in \{(1, 2, 5), (2, 1, 3)\}$.   
\end{lemma}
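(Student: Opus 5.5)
The plan is to settle the equation by a parity argument that forces one of $i,j$ to equal $1$, and then to solve the two remaining cases by removing the absolute values directly.

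\emph{Parity reduction.} Since $q$ is odd and $i+j\ge 2$, the number $2^{i+j}-q$ is odd, so the right-hand side of \eqref{eq9} is odd.
\begin{itemize}
\item If $i,j\ge 2$, then $2^{i-1}q-2^{j-1}$ is even, a contradiction.
\item If $i=j=1$, the left-hand side is $q-1$, which is even, again a contradiction.
\end{itemize}
Hence exactly one of $i,j$ equals $1$.

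\emph{Case $i=1$, $j\ge 2$.} Here \eqref{eq9} reads $|q-2^{j-1}|=|2^{j+1}-q|$, so either the two inner expressions are equal or they are opposite.
\begin{itemize}
\item $q-2^{j-1}=q-2^{j+1}$ forces $2^{j-1}=2^{j+1}$, which is impossible.
\item $q-2^{j-1}=2^{j+1}-q$ gives $2q=5\cdot 2^{j-1}$, that is, $q=5\cdot 2^{j-2}$. Since $q$ is odd, $j=2$ and $q=5$.
\end{itemize}
This yields the solution $(1,2,5)$.

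\emph{Case $j=1$, $i\ge 2$.} Here the left-hand side is $2^{i-1}q-1>0$. Again there are two sign choices.
\begin{itemize}
\item $2^{i-1}q-1=q-2^{i+1}$ gives $q(2^{i-1}-1)=1-2^{i+1}<0$. This is impossible since the left side is nonnegative.
\item $2^{i-1}q-1=2^{i+1}-q$ gives $q(2^{i-1}+1)=2^{i+1}+1$. Write $2^{i+1}+1=4(2^{i-1}+1)-3$. Then $2^{i-1}+1$ divides $3$, which forces $i=2$, and then $3q=9$, so $q=3$.
\end{itemize}
This yields the solution $(2,1,3)$.

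The parity observation at the start is the key step. Once it is in place, only routine sign bookkeeping and one small divisibility check remain. I expect the divisibility step in the last case to be the only point needing slight care, namely making sure that $2^{i-1}+1\ge 5$ for $i\ge 3$ rules out all larger $i$.
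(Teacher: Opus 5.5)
Your proof is correct and uses the same core idea as the paper: the right-hand side is odd, which forces $i=1$ or $j=1$, and each case then reduces to a linear equation in $q$. The difference is the order. The paper first strips the absolute values (showing $2^{i+j}>q$ and, via ``similar arguments'', that $2^{i-1}q>2^{j-1}$) and only then applies parity. You apply parity directly to the absolute values and settle the sign choice inside the two remaining cases, so your version is fully explicit where the paper's is sketchy.
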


\begin{proof}
	If $q > 2^{i + j}$ in \eqref{eq9}, then $2^{i - 1} q - 2^{j - 1} > 2^{2i + j - 1} - 2^{j - 1} > 0$. Thus,
	\[ 0 < 2^{j - 1} (2^{i + 1} - 1) = q (1 - 2^{i - 1}) \leq 0, \]
	which is absurd. Therefore, it follows that $2^{i + j} > q$ in \eqref{eq9}. Successively, using similar arguments, it is easy to also eliminate the absolute value on the left-hand side of \eqref{eq9}. Finally, \eqref{eq9} is equivalent to
	\begin{equation}
		2^{i - 1} q - 2^{j - 1} = 2^{i + j} - q.
		\label{eq10}
	\end{equation}
	Since the right-hand side of \eqref{eq10} is odd, necessarily one of the two terms on the left-hand side must be odd. Therefore, it must be either $i = 1$ or $j = 1$. Substituting $i = 1$ into \eqref{eq10} and simplifying, we obtain $2q = 2^{j - 1} 5$, which implies $j = 2$ and $q = 5$. If instead we substitute $j = 1$ into \eqref{eq10}, we get $q - 1 = 2^{i - 1} (4 - q)$. Since $q - 1 > 0$, it must also be that $4 - q > 0$. Given that by hypothesis $q$ is an odd number $\geq 3$, the only option is $q = 3$, which in turn implies $i = 2$.   
\end{proof}

\begin{lemma}
	\label{p^kq.prim.24.40}
	Let $n = p^k q$, where $p$ and $q$ are distinct primes, $q$ is odd, and $k \geq 2$. If $n$ is $\Delta$-primitive, then $n \in \{24, 40\}$. 
\end{lemma}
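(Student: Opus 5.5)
The plan is to pin down where $q$ can occur in a $\Delta$-triple, reduce the equation modulo $p$ to force $p=2$ and $y=z=q$, and then finish with \Cref{2^i-1q-2^j-1=2^i+j-q}.

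\textbf{Step 1: $q$ is a component, with complement $p^k$.} Fix a $\Delta$-triple $(x,y,z)$ for $n=p^kq$, given by \Cref{Delta.prop.caracterizacion}, and write \eqref{ecuacion.Delta.prop} as a relation among the six numbers $d$ and $n/d$ for $d\in\{x,y,z\}$.
\begin{itemize}
\item Suppose $p$ divides all six numbers. Dividing \eqref{ecuacion.Delta.prop} by $p$ gives the same relation for $n/p^2$, with divisors $x/p,y/p,z/p$ still in the required range, exactly as in the proof of \Cref{p^xq^y.no.tiene.prop.Delta.q>p^x}. Since $n/p^2\ge 2$, this contradicts primitivity.
\item Hence some $d\in\{x,y,z\}$ has $d$ or $n/d$ coprime to $p$, so one of them lies in $\{1,q\}$. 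We have $d\neq 1$ because $x>1$. If $n/d=q$, then $d=p^k<\sqrt n$, i.e.\ $p^k<q$, so $q>p^k$. Then \Cref{p>2k-2_entonces.pk.no tiene.prop.Delta}, applied with the prime $q$ and cofactor $p^k$, gives $n\notin\mathbb{N}(\Delta)$, a contradiction.
\item Therefore $d=q$ is a component of the triple, with complement $p^k$.
\end{itemize}

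\textbf{Step 2: every other component contributes a multiple of $p$.} Take any component $d'\neq q$. By the same reasoning, $d'\neq 1$ and $n/d'\neq q$. Also $d'$ is not coprime to $p$: the only $p$-free divisors are $1$ and $q$. So $p$ divides both $d'$ and $n/d'$, and hence $p$ divides $n/d'-d'$. The term coming from $q$ is $p^k-q\equiv -q\pmod p$.

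\textbf{Step 3: case analysis modulo $p$.} Reduce \eqref{ecuacion.Delta.prop} modulo $p$ according to which positions $q$ occupies.
\begin{itemize}
\item If $q=x$, or $q$ is exactly one of $y,z$ with the other different, the congruence gives $q\equiv 0\pmod p$. This is impossible since $p\neq q$.
\item If $y=z=q$, it gives $2q\equiv 0\pmod p$. As $q$ is odd and $q\neq p$, this forces $p=2$.
\end{itemize}

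\textbf{Step 4: reduction to \Cref{2^i-1q-2^j-1=2^i+j-q}.} Now $p=2$, $y=z=q$, and $x<q$. Then $q\nmid x$, since otherwise $x\ge q$. Write $x=2^a$, where $a\ge1$ by Step 2 and $a<k$ since $x<\sqrt n$. Equation \eqref{ecuacion.Delta.prop} reads
\[
2^{k-a}q-2^a=2\bigl(2^k-q\bigr).
\]
Dividing by $2$ and setting $i=k-a\ge1$ and $j=a\ge1$ gives
\[
2^{i-1}q-2^{j-1}=2^{i+j}-q.
\]
This is \eqref{eq9} of \Cref{2^i-1q-2^j-1=2^i+j-q}, which yields $(i,j,q)\in\{(1,2,5),(2,1,3)\}$. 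In both cases $k=i+j=3$, so $n\in\{2^3\cdot5,\;2^3\cdot3\}=\{40,24\}$.

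I expect the main obstacle to be the bookkeeping in Steps 1–2. The delicate points are that the division-by-$p$ argument preserves the strict inequalities $1<x/p<y/p\le z/p<\sqrt{n/p^2}$, and that the boundary possibility $n/d=q$ is excluded exactly by the dominating-prime theorem.
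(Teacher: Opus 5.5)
Your proof is correct and follows essentially the paper's route: primitivity (dividing the relation by $p$) forces a $p$-free term, such terms can only come from the divisor pair $\{q,p^k\}$, a count modulo $p$ forces that pair to occur twice on the right so that $y=z=q$ and $p=2$, and \Cref{2^i-1q-2^j-1=2^i+j-q} finishes. The only bookkeeping difference is that you fix the orientation $q<p^k$ by invoking \Cref{p>2k-2_entonces.pk.no tiene.prop.Delta}, whereas the paper writes each term as $|p^{e_1}q-p^{e_2}|$ and lets the absolute values absorb both orientations. Two small points: your concern about $1<x/p$ is moot, since the divided relation already shows $n/p^2\in\mathbb{N}(\Delta)$ straight from the definition; and $a<k$ is best justified by Step 2 ($2\mid n/x$) rather than by $x<\sqrt n$ alone.
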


\begin{proof}
	Since $n \in \mathbb{N}(\Delta)$, there exist non-negative integers $e_1, \ldots, e_6$ such that $e_1 + e_2 = e_3 + e_4 = e_5 + e_6 = k$ and
	\begin{equation}
		|p^{e_1} q - p^{e_2}| = |p^{e_3} q - p^{e_4}| + |p^{e_5} q - p^{e_6}|.
		\label{eq4}
	\end{equation}
	If $e_i > 0$ for all $i$, we can divide both sides of \eqref{eq4} by $p$, obtaining that $m = p^{k - 2} q \in \mathbb{N}(\Delta)$ (recall that $k \geq 2$). But then $n = p^2 m$, which contradicts the primitivity of $n$. Thus, $0 \in \{e_1, \ldots, e_6\}$. Note that necessarily $e_2, e_4, e_6 > 0$ in \eqref{eq4}, thanks to  \Cref{cotasup.D^*capD^+}. 
	
	If $e_1 = 0$, then \eqref{eq4} becomes
	\begin{equation}
		|q - p^k| = |p^{e_3} q - p^{e_4}| + |p^{e_5} q - p^{e_6}|. 
		\label{eq5}
	\end{equation}
	If $e_3 = 0$ or $e_5 = 0$, we find $|q - p^k|$ as a summand on the right-hand side of \eqref{eq5}. Both cases lead to clear absurdities. If $e_3, e_5 > 0$, then the right-hand side of \eqref{eq5} becomes a multiple of $p$: this represents another absurdity because the left-hand side of \eqref{eq5} is not. Therefore, we can conclude that $e_1 > 0$ in \eqref{eq4}, i.e., the left-hand side of \eqref{eq4} is a multiple of $p$, and moreover $0 \in \{e_3, e_5\}$. 
	
	Suppose now that $e_3 = 0$ and $e_5 > 0$. After reducing \eqref{eq4} modulo $p$, we get that $p\mid q$, which is obviously false. The same would happen if $e_3 > 0$ and $e_5 = 0$. Hence, the only possibility is that $e_3 = e_5 = 0$. After all these steps, we have converted \eqref{eq4} into
	\begin{equation}
		|p^{e_1} q - p^{e_2}| = 2 |q - p^{e_1 + e_2}|.
		\label{eq6}
	\end{equation}
	Reducing \eqref{eq6} modulo $p$, we obtain $2q \equiv 0 \pmod p$, which is true only if $p = 2$. We can then rewrite \eqref{eq6} as
	\begin{equation}
		|2^{e_1 - 1} q - 2^{e_2 - 1}| = |q - p^{e_1 + e_2}|.
		\label{eq7}
	\end{equation}
	We have shown that if $n$ is $\Delta$-primitive, then $n = 2^{e_1 + e_2} q$, where the integers $e_1, e_2$ and $q$ satisfy \eqref{eq7}. By  \Cref{2^i-1q-2^j-1=2^i+j-q}, the only triples $(e_1, e_2, q)$ of natural numbers satisfying \eqref{eq7} are $(1, 2, 5)$ and $(2, 1, 3)$. Therefore, we conclude that $n \in \{24, 40\}$. 
\end{proof}

\begin{theorem}
	\label{p^kq.tiene.prop.Delta.iff...}
	Let $n = p^k q$, where $p$ and $q$ are primes and $k \in \mathbb{N}$. Then, $n \in \mathbb{N}(\Delta)$ if and only if $n \in \{2^{2h + 1} q : q \in \{3, 5\}, h \in \mathbb{N}\}$. Moreover, $n$ is $\Delta$-primitive if and only if $n\in\{24,40\}$.
\end{theorem}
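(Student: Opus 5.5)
Two directions, using the square-multiple structure of $\mathbb{N}(\Delta)$.

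\emph{Sufficiency.} Note $24=2^3\cdot 3$ and $40=2^3\cdot 5$ lie in $\mathbb{N}(\Delta)$ via the $\Delta$-triples $(2,3,3)$ and $(4,5,5)$. By \Cref{multiplos.cuadr.prop.delta}, $2^{2h+1}q=(2^{h-1})^2\cdot 2^3q\in\mathbb{N}(\Delta)$ for $q\in\{3,5\}$ and $h\ge 1$. This gives the ``if'' direction. For $h=0$ the number is $2q\in\{6,10\}$, which is excluded by \Cref{2impar.no.tiene.prop.Delta}; so I read $h\in\mathbb{N}=\{1,2,\dots\}$, consistent with the paper's convention.

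\emph{Necessity.} Let $n=p^kq\in\mathbb{N}(\Delta)$. First dispose of the degenerate cases.
\begin{itemize}
\item If $p=q$, then $n$ is a prime power, excluded by \Cref{p^k.no.tiene.prop.Delta}.
\item If $k=1$, then $n=pq$, excluded by \Cref{pq.no.tiene.prop.Delta}.
\item If $q=2$, then $n=2p^k$ with $p$ odd, excluded by \Cref{2impar.no.tiene.prop.Delta}.
\end{itemize}
So $p\ne q$, $q$ is odd and $k\ge 2$.

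By \Cref{n=alfa^2 m_m.delta.prim}, write $n=\alpha^2 m$ with $m$ $\Delta$-primitive (allowing $\alpha=1$). Since $q$ appears to the first power, $q\nmid\alpha$, so $q\mid m$ and $m=p^jq$ with $j\equiv k \pmod 2$ and $j\le k$. Now $m$ is in $\mathbb{N}(\Delta)$, so it cannot be $q$ (prime) or $pq$ (\Cref{pq.no.tiene.prop.Delta}). Hence $j\ge 2$, and \Cref{p^kq.prim.24.40} applies to $m$, giving $m\in\{24,40\}$.

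It follows that $p=2$, $q\in\{3,5\}$, $j=3$, and $k$ is odd, say $k=2h+1$ with $h\ge1$. This is the claimed form.

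\emph{Primitivity.} If $n=2^{2h+1}q$ is $\Delta$-primitive, the argument above with $\alpha=1$ gives $n=m\in\{24,40\}$. Conversely, $24$ and $40$ are primitive: the only possible decompositions are $24=2^2\cdot 6$ and $40=2^2\cdot 10$, and $6,10\notin\mathbb{N}(\Delta)$ by \Cref{2impar.no.tiene.prop.Delta}.

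The real work is already done in \Cref{p^kq.prim.24.40}. The only delicate step here is the reduction to the $\Delta$-primitive part: one must check that the primitive factor $m$ keeps the form $p^jq$ with $j\ge 2$, so that the lemma's hypotheses hold. That check is handled by the parity and valuation argument above.
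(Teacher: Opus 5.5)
Your proposal is correct and follows essentially the same route as the paper: exclude the degenerate cases with the earlier obstructions, pass to the $\Delta$-primitive part $m=p^{j}q$ of $n$, apply \Cref{p^kq.prim.24.40}, and obtain sufficiency from \Cref{multiplos.cuadr.prop.delta}. The differences are cosmetic: you let \Cref{p^kq.prim.24.40} absorb the case $k=2$, whereas the paper rules it out separately via \Cref{pq_pq^2_p^2q^2.no.tiene.prop.Delta}, and you spell out two points the paper takes for granted, namely the valuation argument that the square factor is a power of $p$ and the check that $24$ and $40$ are $\Delta$-primitive.
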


\begin{proof}
		If $k = 1$, then $n \notin \mathbb{N}(\Delta)$ by \Cref{pq.no.tiene.prop.Delta}. If $p = q$, then $n \notin \mathbb{N}(\Delta)$ by \Cref{p^k.no.tiene.prop.Delta}. If $q = 2$ and $p>2$, then $n \notin \mathbb{N}(\Delta)$ by \Cref{2impar.no.tiene.prop.Delta}. If $k=2$, then $n \notin \mathbb{N}(\Delta)$ by \Cref{pq_pq^2_p^2q^2.no.tiene.prop.Delta}. Therefore, if $n \in \mathbb{N}(\Delta)$, then $q$ has to be odd, $p\neq q$ and $k\ge 3$.
				
		Let $k \geq 3$. If $n$ is $\Delta$-primitive, we use \Cref{p^kq.prim.24.40}. If $n$ is not $\Delta$-primitive, then there exists an integer $e > 0$ such that $n = p^{2e}\, p^{k - 2e} q$ and $m = p^{k - 2e} q$ is $\Delta$-primitive. If $k - 2e \in \{0, 1\}$, then $m \in \{q, p q\}$, and neither $q$ nor $p q$ lies in $\mathbb{N}(\Delta)$, by \Cref{p^k.no.tiene.prop.Delta} and \Cref{pq.no.tiene.prop.Delta}; this contradicts that $m\in\mathbb{N}(\Delta)$. Hence, $k - 2e \geq 2$, so $m$ satisfies the hypotheses of \Cref{p^kq.prim.24.40}, which gives $m \in\{24, 40\}$, i.e., $p = 2$, $k - 2e = 3$ and $q \in \{3, 5\}$. Therefore, $n = 2^{2e + 3} q=2^{2h+1}q$, with $q \in \{3, 5\}$ and $h=e+1$.
		
		Conversely, since $24, 40$ are $\Delta$-primitive, every $2^{2h+1}q$ with $q \in \{3,5\}$ lies in $\mathbb{N}(\Delta)$ by \Cref{multiplos.cuadr.prop.delta}.
\end{proof}
%	If $k = 1$, we apply  \Cref{pq.no.tiene.prop.Delta}. If $k \geq 2$, suppose $p \neq q$, $q$ is odd, and $n \in \mathbb{N}(\Delta)$. If $n$ is $\Delta$-primitive, we use  \Cref{p^kq.prim.24.40}. If $n$ is not $\Delta$-primitive, then there exists an integer $e > 0$ such that $n = p^{2e} p^{k - 2e} q$ and $m = p^{k - 2e} q$ is $\Delta$-primitive. But again, by  \Cref{p^kq.prim.24.40}, it follows that $m \in \{24, 40\}$, i.e., $p = 2, k - 2e = 3$ and $q \in \{3, 5\}$. 

\begin{corollary}
	If $p$ and $q$ are distinct odd primes, consider a number $n$ of the form $p^2 q^3, p^3 q^3$, or $p^2 q^4$. If $n \in \mathbb{N}(\Delta)$, then $n$ is $\Delta$-primitive.
\end{corollary}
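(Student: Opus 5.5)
Suppose $n\in\mathbb{N}(\Delta)$ has one of the forms $p^2q^3$, $p^3q^3$, $p^2q^4$ with $p,q$ distinct odd primes. We argue by contradiction: assume $n$ is not $\Delta$-primitive. By \Cref{n=alfa^2 m_m.delta.prim} we can write $n=\alpha^2 m$ with $\alpha\ge 2$ and $m$ a $\Delta$-primitive, so $m\in\mathbb{N}(\Delta)$. Since $\alpha^2\mid n$, the number $m$ has the form $p^{x'}q^{y'}$ with $x'\equiv x$, $y'\equiv y \pmod 2$, $0\le x'\le x$, $0\le y'\le y$, and $(x',y')\neq(x,y)$. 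We then list the possible pairs $(x',y')$ and show that each resulting $m$ fails the $\Delta$ property, using only the obstructions already proved.

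\emph{Case $n=p^2q^3$.} The candidates are $m\in\{q,\ q^3,\ p^2q\}$. The first two are prime powers, excluded by \Cref{p^k.no.tiene.prop.Delta}. The third, $p^2q$, is excluded by \Cref{pq_pq^2_p^2q^2.no.tiene.prop.Delta}.

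\emph{Case $n=p^3q^3$.} The candidates are $m\in\{pq,\ p^3q,\ pq^3\}$. Here $pq$ is excluded by \Cref{pq.no.tiene.prop.Delta}. The numbers $p^3q$ and $pq^3$ have the form $p^kq$ (with the roles of the primes possibly swapped). By \Cref{p^kq.tiene.prop.Delta.iff...}, such a number lies in $\mathbb{N}(\Delta)$ only if the prime raised to the power $k$ equals $2$. This is impossible because $p$ and $q$ are odd.

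\emph{Case $n=p^2q^4$.} The candidates are $m\in\{1,\ p^2,\ q^2,\ q^4,\ p^2q^2\}$. The value $1$ is trivially not in $\mathbb{N}(\Delta)$, since $D_1^*=\{0\}$ forces $D_1^+=\varnothing$. The prime powers are excluded by \Cref{p^k.no.tiene.prop.Delta}, and $p^2q^2$ is excluded by \Cref{pq_pq^2_p^2q^2.no.tiene.prop.Delta}.

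In every case no admissible $m$ lies in $\mathbb{N}(\Delta)$. This contradicts the choice of $m$, so $n$ must be $\Delta$-primitive.

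The argument has no real obstacle; it only requires care in enumerating the divisors $m$ correctly, in particular matching parities of exponents. The oddness hypothesis is used only in the $p^kq$ step, through \Cref{p^kq.tiene.prop.Delta.iff...}.
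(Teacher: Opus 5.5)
Your proof is correct and follows the paper's argument: write $n=\alpha^2 m$ with $m$ $\Delta$-primitive, list the possible $m$, and rule out each one using \Cref{p^k.no.tiene.prop.Delta}, \Cref{pq_pq^2_p^2q^2.no.tiene.prop.Delta} and \Cref{p^kq.tiene.prop.Delta.iff...}. Your parity-based enumeration is slightly more complete than the paper's: the paper only takes $\alpha\in\{p,q,pq\}$, so for $n=p^2q^4$ it omits the candidates $m=p^2$ (from $\alpha=q^2$) and $m=1$ (from $\alpha=pq^2$), which you do cover.
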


\begin{proof}
	Let $n \in \{p^2 q^3, p^3 q^3, p^2 q^4\}$. Suppose $n$ has the $\Delta$ property but is not $\Delta$-primitive. Then $n = \alpha^2 m$, where $\alpha \in \{p, q, p q\}$ and $m$ is $\Delta$-primitive. Thus,
	\[ m \in \{q, q^3, p^2 q, p q, p^3 q, p q^3, q^2, q^4, p^2 q^2\}. \]
	Thanks to \Cref{p^k.no.tiene.prop.Delta,pq_pq^2_p^2q^2.no.tiene.prop.Delta,p^kq.tiene.prop.Delta.iff...}, we know this is not possible.
\end{proof}

Based on experimental evidence, we conjecture that 24 and 40 are the only $\Delta$-primitive numbers using exactly two primes in their decomposition.

\begin{conjecture}
	Let $n$ be a natural number of the form $p^x q^y$, where $p$ and $q$ are distinct primes and $x, y\in\mathbb{N}$. If $n$ is $\Delta$-primitive, then $n \in \{24, 40\}$. 
\end{conjecture}

%%%%%%%%%%%%%%%%%%%%%%%%%%%%%%%%%%%%%%%%%%%%%%%%%%%%%%%%%%%%%%

\subsection{Products of three distinct primes}

 \Cref{pq.no.tiene.prop.Delta} cannot be extended to a product $p q r$ of three distinct primes: numbers like $105 = 3 \cdot 5 \cdot 7$ and $385 = 5 \cdot 7 \cdot 11$ would be counterexamples. However, we will analyze this new situation to understand in detail how the case $p q$ differs from the case $p q r$.

Let $p, q$, and $r$ be primes such that $p < q < r$. If $n = p q r$, then 
%\[ D_n = \{1, p, q, r, p q, p r, q r, n\}, \]
\[ D_n^* = \{n - 1, q r - p, p r - q, |p q - r|\}, \]
\[ D_n^+ = \{q r - p + p r - q, q r - p + |p q - r|, p r - q + |p q - r|\} \cup \]
\[ \cup (D_n^* + (n - 1)) \cup 2 D_n^*, \]
where the elements of $D_n^*$ are listed in decreasing order. This complete ordering is due to $1 < p < q < r, \ p q < p r < q r < n$. To determine whether $D_n^*$ and $D_n^+$ are disjoint, we examine all possible equalities $a + b = c$ with $a, b, c \in D_n^* - \{0\}$. Since $a + b > a,b$ (both summands being positive), neither summand can coincide with the sum. Moreover, by \Cref{cotasup.D^*capD^+}, $n - 1$ cannot belong to $D_n^* \cap D_n^+$. After these reductions, only the following cases remain:
%Imagine having to prove that $a + b \neq c$ for every triple of elements $a, b, c \in D_n^* - 0$. Clearly, $a + b \neq a$. Thanks to this observation and  \Cref{cotasup.D^*capD^+}, we can immediately discard many cases, leaving only the following:
\begin{center}
	\begin{multicols}{2}
		\begin{enumerate}
			\item $q r - p + p r - q = |p q - r|$,
			\item $q r - p + |p q - r| = p r - q$,
			\item $p r - q + |p q - r| = q r - p$,
			\item $2(q r - p) \in \{p r - q, |p q - r|\}$,
			\item $2(p r - q) = q r - p$,
			\item $2(p r - q) = |p q - r|$,
			\item $2|p q - r| = q r - p$,
			\item $2|p q - r| = p r - q$.
		\end{enumerate}
	\end{multicols}
\end{center}
%\begin{enumerate}
%	\item $q r - p + p r - q = |p q - r|$,
%	\item $q r - p + |p q - r| = p r - q$,
%	\item $p r - q + |p q - r| = q r - p$,
%	\item $2(q r - p) \in \{p r - q, |p q - r|\}$,
%	\item $2(p r - q) = q r - p$,
%	\item $2(p r - q) = |p q - r|$,
%	\item $2|p q - r| = q r - p$,
%	\item $2|p q - r| = p r - q$.
%\end{enumerate}
The impossibility of (1), (2), (4), and (6) is evident due to the complete ordering of the elements of $D_n^*$. 

If $r < p q$, then (7) can be rewritten as $p (2q + 1) = r (q + 2)$, so $q = k p - 2$, for some $k \in \mathbb{N}$. Then, $k (2p - r) = 3$, and thus $k \in \{1, 3\}$. If $k = 1$, then $q = p - 2 < p$. If $k = 3$, then $r = 2p - 1 < 3p - 2 = q$. Both cases contradict the hypotheses. If $p q < r$, then (7) is equivalent to $r (2 - q) = p (2q - 1)$, which is obviously false.

If $r < p q$, equality (8) can be rewritten as $q (2p + 1) = r (p + 2)$. Since $q (2p + 1)$ is odd, necessarily $p \neq 2$. Moreover, $q | (p + 2)$, which implies $q \leq p + 2$. But then $q = p + 2$, since $3 \leq p < p + 2 \leq q$, and thus $r = 2p + 1$. If $p q < r$, then we can convert (8) into $r (2 - p) = q (2p - 1)$, which is clearly absurd. 

We can rewrite (5) as $p (2r + 1) = q (r + 2)$. Then, $2r + 1 = k q$ and $r + 2 = h p$, for certain $k, h \in \mathbb{N}$. But then $p k q = q h p$, i.e., $k = h$, from which we deduce that $k (2p - q) = 3$. Hence, $k \in \{1, 3\}$. If $k = 1$, then $r = p - 2 < p$, a contradiction. If $k = 3$, then $r = 3p - 2$ and $q = 2p - 1$.

Regarding equality (3), note that it can be rewritten as
\[ (r + p + 1)(q - p - 1) = -p^2 - p - 1, \]
if $p q < r$, and
\[ (r - p + 1)(q - p + 1) = p^2 - p + 1, \]
if $r < p q$. The first case is immediately discarded. Regarding the second, we can say that $d_1 = q - p + 1$ and $d_2 = r - p + 1$ are complementary divisors of the integer $m = d_1 d_2 = p^2 - p + 1$. Therefore, by fixing $p$, we will have a finite number of pairs $(q, r)$ such that $p q r \in \mathbb{N}(\Delta)$. We also note that: 1) $1 < d_1 < \sqrt{m}$; 2) $m, d_1$ and $d_2$ are odd for $p > 2$. We can summarize all this analysis in the following proposition.

\begin{theorem}
	\label{pqr.prop.Delta}
	Let $p, q$, and $r$ be prime numbers such that $p < q < r$. Then $p q r \in \mathbb{N}(\Delta)$ if and only if one of these conditions is satisfied:
	\begin{enumerate}
		\item $(r - p + 1)(q - p + 1) = p^2 - p + 1$,
		\item $(q, r) \in \{(p + 2, 2p + 1), (2p - 1, 3p - 2)\}$.
	\end{enumerate}
	In particular, once $p$ is fixed, there are only finitely many pairs $(q, r)$ such that $p q r \in \mathbb{N}(\Delta)$. 
\end{theorem}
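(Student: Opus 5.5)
The proof assembles the case analysis carried out just before the statement. With $n=pqr$ and $p<q<r$, the strict orderings $1<p<q<r$ and $pq<pr<qr<n$ fix the decreasing order $n-1>qr-p>pr-q>|pq-r|$ of the four elements of $D_n^*$. By definition, $n\in\mathbb{N}(\Delta)$ if and only if some $c\in D_n^*$ equals $a+b$ with $a,b\in D_n^*-\{0\}$.

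\emph{Reduction.} Since $a+b>a,b$, neither summand equals $c$. By \Cref{cotasup.D^*capD^+}, $c\neq n-1$. The summand $n-1$ is also excluded, since $c\le \frac{2n}{3}-6<n-1$. This leaves exactly equalities (1)--(8). Equalities (1), (2), (4), (6) are impossible by the ordering, since in each a sum of positive terms would be at most one of its own summands or a smaller element.

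\emph{The remaining equalities.} For (7) and (8) I split according to the sign of $pq-r$. When $pq<r$, both reduce to a negative quantity equal to a positive one, which is absurd. When $r<pq$, (7) becomes $p(2q+1)=r(q+2)$. Primality then gives $q\equiv -2\pmod p$, which forces $q=p-2$ or $r=2p-1<q$, both contradictions. In the same case, (8) becomes $q(2p+1)=r(p+2)$, so $q\mid p+2$. Hence $q=p+2$ and then $r=2p+1$. For (5), I would write $p(2r+1)=q(r+2)$ and use coprimality to get $2r+1=kq$ and $r+2=kp$. Eliminating $r$ gives $k(2p-q)=3$, and only $k=3$ survives, giving $(q,r)=(2p-1,3p-2)$. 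For (3), expanding and completing the product, in the same way as in \Cref{prop:n=xyz}, gives $(r+p+1)(q-p-1)=-(p^2+p+1)$ when $pq<r$. This is impossible because $q-p-1\ge 0$. When $r<pq$, the same expansion gives exactly condition (1) of the statement.

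\emph{Converse and finiteness.} Each manipulation above is reversible. The converse therefore only needs that conditions (1) and (2) reproduce equalities (3), (8) and (5) respectively, with the sign of $pq-r$ consistent. For the pairs in condition (2), one checks $r<pq$ directly for $p\ge 3$ (for $p=2$ the second pair gives $q=3$, $r=4$, not prime, and the first gives $q=4$). For condition (1), it must follow that $r<pq$. This holds because $r-p+1\le p^2-p+1$ gives $r\le p^2<pq$.

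Finiteness for fixed $p$ follows because in condition (1) the quantity $q-p+1$ is a divisor of the fixed number $p^2-p+1$, which determines $r$. Condition (2) contributes at most two pairs.

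The main obstacle is bookkeeping rather than any single step. Each equation must be shown equivalent to the displayed condition in both directions, and the case $r<pq$ versus $r>pq$ must be handled correctly inside the absolute value.
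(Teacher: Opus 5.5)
Your proposal is correct and follows the paper's own argument: the same decreasing ordering of $D_n^*$, the same reduction to equalities (3), (5), (7), (8), and the same factorizations and divisibility arguments in each case. Your explicit treatment of the converse fills in a step the paper leaves implicit under ``it follows from the previous discussion.'' That treatment consists of deducing $r\le p^2<pq$ from condition (1) and checking $r<pq$ for the pair $(p+2,2p+1)$. The sign check is unnecessary for $(2p-1,3p-2)$, since equality (5) does not involve $|pq-r|$.
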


\begin{proof}
	It follows from the previous discussion.
\end{proof}

Notice that \Cref{pqr.prop.Delta}(1) is exactly \eqref{eq:xyz-identity} restricted to the prime setting; \Cref{prop:n=xyz} extends it to arbitrary $\Delta$-triples with $n = xyz$.

Let's see some examples of applying  \Cref{pqr.prop.Delta}. If $p = 2$, then $2 q r \in \mathbb{N}(\Delta)$ if and only if $(r - 1)(q - 1) = 3$ (1) or $(q, r) \in \{(4, 5), (3, 4)\}$ (2). Note that (1) implies $q - 1 = 1$, which contradicts $p < q$. Regarding (2), we see there are no pairs of primes. Therefore, we conclude that $2 q r \notin \mathbb{N}(\Delta)$. This is consistent with  \Cref{2impar.no.tiene.prop.Delta}, of which  \Cref{pqr.prop.Delta} is a particular case when $p = 2$. If $p = 3$, then $3 q r \in \mathbb{N}(\Delta)$ if and only if $(r - 2)(q - 2) = 7$ (1) or $(q, r) = (5, 7)$ (2). We see that (1) implies $q - 2 = 1$, which contradicts $p < q$. Thus, $3 q r \in \mathbb{N}(\Delta)$ if and only if $(q, r) = (5, 7)$, by (2). If $p = 5$, then $5 q r \in \mathbb{N}(\Delta)$ if and only if $(r - 4)(q - 4) = 21$ (1) or $(q, r) \in \{(7, 11), (9, 13)\}$ (2). Since $1 < q - 4 < \sqrt{21} < 5$, it follows from (1) that $q - 4 = 3$ and $r - 4 = 7$. Hence, $5 q r \in \mathbb{N}(\Delta)$ if and only if $(q, r) = (7, 11)$. Similarly, it is very easy to verify that $7 q r \in \mathbb{N}(\Delta)$ if and only if $(q, r) = (13, 19)$. We can summarize all this in the following corollary.

\begin{corollary}
	Let $p, q$, and $r$ be prime numbers such that $p < q < r$, and let $n = p q r$. If $p \leq 7$ and $n \in \mathbb{N}(\Delta)$, then $n \in \{105, 385, 1729\}$.
\end{corollary}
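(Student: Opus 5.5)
The plan is to apply \Cref{pqr.prop.Delta} separately for each prime $p\in\{2,3,5,7\}$. For each such $p$, the theorem leaves two possibilities to test: the factorization condition (1), and the two explicit pairs in (2). The cases $p=2,3,5$ have already been worked out in the discussion just before the corollary. Their outcome is that $2qr\notin\mathbb{N}(\Delta)$, that $3qr\in\mathbb{N}(\Delta)$ only for $(q,r)=(5,7)$, which gives $105$, and that $5qr\in\mathbb{N}(\Delta)$ only for $(q,r)=(7,11)$, which gives $385$. I would just recall these conclusions, so the only remaining work is the case $p=7$.

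For $p=7$, consider condition (1) first. Here $m=p^2-p+1=43$, which is prime. The factor $d_1=q-p+1$ must satisfy $1<d_1<\sqrt{m}$, and since $43$ has no divisor in that range, condition (1) cannot hold. Condition (2) offers the pairs $(q,r)=(p+2,2p+1)=(9,15)$ and $(q,r)=(2p-1,3p-2)=(13,19)$. The first is excluded because $9$ is not prime. In the second, both $13$ and $19$ are prime, and the product is $7\cdot13\cdot19=1729$.

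I would also confirm directly that $1729$ really belongs to $\mathbb{N}(\Delta)$. For this, use case (5) of the preceding case list, $2(pr-q)=qr-p$. With $p=7$, $q=13$, $r=19$ this reads $2\cdot120=240=247-7$, which holds. Combining the four values of $p$ then gives $n\in\{105,385,1729\}$.

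There is no substantive obstacle here, since everything reduces to finite checks. The one point needing care is the condition $1<d_1<\sqrt{m}$ in (1): it follows from $q>p$ together with $d_1\le d_2$ and $d_1d_2=m$, and it is what rules out the factorization case whenever $p^2-p+1$ is prime.
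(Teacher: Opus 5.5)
Your proposal is correct and takes essentially the paper's route. The paper's proof is simply ``it follows from the previous discussion,'' meaning \Cref{pqr.prop.Delta} applied case by case, with $p=2,3,5$ worked out in the text and $p=7$ dismissed as ``very easy.'' Your explicit $p=7$ check supplies exactly that omitted step: $43$ is prime, so condition (1) fails; the pair $(9,15)$ fails; and $(13,19)$ gives $1729$.
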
 

\begin{proof}
	It follows from the previous discussion.  
\end{proof}

The next corollary generalizes some arguments used in the previous examples. 

\begin{corollary}
	\label{p^2-p+1.primo...}
	Let $p, q$, and $r$ be prime numbers such that $p < q < r$. If $p^2 - p + 1$ is prime and the set $\{p + 2, 2p \pm 1, 3p - 2\}$ contains at most one prime, then $p q r \notin \mathbb{N}(\Delta)$. 
\end{corollary}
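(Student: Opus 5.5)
The plan is to apply \Cref{pqr.prop.Delta} directly and show that, under the stated hypotheses, neither of its two conditions can hold. I would proceed by contradiction: assume $pqr\in\mathbb{N}(\Delta)$ with $p<q<r$ primes, $p^2-p+1$ prime, and at most one prime in $\{p+2,2p\pm1,3p-2\}$.

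\emph{Condition (1).} Put $m=p^2-p+1$, $d_1=q-p+1$ and $d_2=r-p+1$. Then $d_1d_2=m$, and since $q<r$ we have $d_1<d_2$. Because $m$ is prime, the only factorization with $d_1<d_2$ is $d_1=1$, $d_2=m$. But $d_1=1$ means $q=p$, which contradicts $p<q$. (Equivalently, this breaks the remark $1<d_1<\sqrt m$ in the preceding discussion.) So condition (1) fails.

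\emph{Condition (2).} This would require either $(q,r)=(p+2,2p+1)$ or $(q,r)=(2p-1,3p-2)$. In either case two distinct elements of $\{p+2,2p+1,2p-1,3p-2\}$ must both be prime. The only point to check is that each pair really consists of two distinct elements, so that it would give two primes in the set. We have $p+2\ne 2p+1$ since $p\neq1$, and $2p-1\neq 3p-2$ since $p\ne 1$. So each alternative puts two primes in the set, contradicting the hypothesis, and condition (2) fails as well.

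By \Cref{pqr.prop.Delta} we conclude $pqr\notin\mathbb{N}(\Delta)$. I expect no real obstacle here; the only care needed is reading $2p\pm1$ as the two numbers $2p+1$ and $2p-1$, and confirming that the primes forced by condition (2) are distinct.
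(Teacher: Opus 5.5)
Your proposal is correct and follows the paper's proof: condition (1) of \Cref{pqr.prop.Delta} fails because primality of $p^2-p+1$ forces $q-p+1=1$, i.e.\ $q=p$, and condition (2) fails because either alternative would put two primes in $\{p+2,2p\pm1,3p-2\}$. Your check that each pair consists of distinct numbers fills in a detail the paper leaves implicit.
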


\begin{proof}
	Condition (1) of  \Cref{pqr.prop.Delta} cannot be fulfilled acceptably if $p^2 - p + 1$ is prime, since this implies $q - p + 1 = 1$, contradicting the hypothesis that $p < q$.
	
	If the set $\{p + 2, 2p \pm 1, 3p - 2\}$ contains at most one prime, then it is impossible to form any pair of primes $(q, r)$ as required by condition (2) of  \Cref{pqr.prop.Delta}.
\end{proof}

The numbers 13, 67, and 79 are the smallest primes satisfying the hypotheses of  \Cref{p^2-p+1.primo...}.

Recall that $\tau:\mathbb{N}\to\mathbb{N}$ denotes the divisor counting function, $\tau(n)=|D_n|$, which is multiplicative and satisfies $\tau(p_1^{a_1}\cdots p_k^{a_k})=\prod_{i=1}^k(a_i+1)$, provided $p_i$ is prime for all $i\in[k]$. Then, the obstructions established along this section admit the following consequence for $\tau$.

\begin{corollary}
	\label{cor:tau-cota-refined}
	Let $n \in \mathbb{N}(\Delta)$ with $n \notin \{24, 40\}$ and $n$ not a 
	product of three distinct primes. Then,
	\[
	\tau(n) \in \{12\} \,\cup\, \{k \geq 15 : k \text{ composite}\}.
	\]
\end{corollary}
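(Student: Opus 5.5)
The plan is a case analysis on the value of $\tau(n)$. The statement is equivalent to showing that $\tau(n)\notin\{1,2,\dots,11,13,14\}$ and that $\tau(n)$ is not a prime $\ge 17$. Once this is done, the only values left are $12$ and the composite integers $\ge 15$. Each excluded value of $\tau$ allows only a handful of factorization shapes, via $\tau(p_1^{a_1}\cdots p_k^{a_k})=\prod(a_i+1)$. I will rule out every shape using the obstructions of this section, together with the two hypotheses $n\notin\{24,40\}$ and $n\neq pqr$.

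\textbf{Step 1: $\tau(n)=1$ or $\tau(n)$ prime.} If $\tau(n)=1$, then $n=1$, $D_1^*=\{0\}$ and $D_1^+=\varnothing$, so $1\notin\mathbb{N}(\Delta)$. If $\tau(n)=\ell$ is prime, the product $\prod(a_i+1)=\ell$ has a single factor, so $n=p^{\ell-1}$ is a prime power. This is excluded by \Cref{p^k.no.tiene.prop.Delta}. This step disposes of $\tau(n)\in\{2,3,5,7,11,13\}$ and of all primes $\ge 17$ simultaneously.

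\textbf{Step 2: the composite values $4,6,8,9,10,14$.} For each value I list the shapes with distinct primes $p,q,r$ and exclude them.
\begin{itemize}
\item $\tau=4$: the shapes are $p^3$ and $pq$. These are excluded by \Cref{p^k.no.tiene.prop.Delta} and \Cref{pq.no.tiene.prop.Delta}.
\item $\tau=6$: the shapes are $p^5$ and $p^2q$. These are excluded by \Cref{p^k.no.tiene.prop.Delta} and \Cref{pq_pq^2_p^2q^2.no.tiene.prop.Delta}.
\item $\tau=9$: the shapes are $p^8$ and $p^2q^2$. These are excluded by the same two results.
\item $\tau=8$: the shapes are $p^7$, $p^3q$ and $pqr$. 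Here $p^7$ is a prime power, and $pqr$ is excluded by hypothesis. For $p^3q$, \Cref{p^kq.tiene.prop.Delta.iff...} says $p^3q\in\mathbb{N}(\Delta)$ forces $p^3q=2^{2h+1}q$ with $q\in\{3,5\}$ and $2h+1=3$. Hence $n\in\{24,40\}$, which is also excluded by hypothesis.
\item $\tau=10$ and $\tau=14$: the shapes are $p^9$, $p^4q$ and $p^{13}$, $p^6q$ respectively. The prime powers are excluded as before. For $p^4q$ and $p^6q$, \Cref{p^kq.tiene.prop.Delta.iff...} requires the exponent of $p$ to be odd, namely $2h+1$, so neither $4$ nor $6$ is possible.
\end{itemize}

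\textbf{Step 3: conclusion.} Since every $k\le 14$ other than $12$ is either $1$, a prime, or one of $4,6,8,9,10,14$, all such values are now excluded. The remaining possibilities are $\tau(n)=12$ or $\tau(n)\ge 15$ with $\tau(n)$ non-prime, i.e.\ composite.

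The main point requiring care is Step 2 for $\tau=8$. This is exactly where both hypotheses $n\notin\{24,40\}$ and $n\neq pqr$ are used. I must also check that the classification in \Cref{p^kq.tiene.prop.Delta.iff...} is applied with $p\neq q$ and with $q$ the prime of exponent one, which is how every shape above is written. The rest is bookkeeping of the factorizations of each small $\tau$.
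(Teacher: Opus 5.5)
Your proof is correct and follows essentially the same case analysis on factorization shapes as the paper. It invokes the same obstructions (prime powers, $pq$, $p^2q$, $p^2q^2$, and the classification of $p^kq$), and uses both hypotheses at $\tau(n)=8$; the only difference is that the paper disposes of all $\tau(n)\le 5$ at once by noting that a $\Delta$-triple $(x,y,z)$ gives six distinct divisors $1,x,y,n/y,n/x,n$, whereas you treat $\tau(n)=1$ and $\tau(n)=4$ directly, which works equally well.
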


\begin{proof}
	Since $n$ admits a $\Delta$-triple $(x,y,z)$, we have that $\{1,x,y\}\subset[1,\sqrt{n})$ and $\{n/y,n/x,n\}\subset(\sqrt{n},n]$ are all distinct, giving $\tau(n)\geq 6$. If $\tau(n)$ is prime, then $n=p^{\tau(n)-1}$: excluded by \Cref{p^k.no.tiene.prop.Delta}.
	
	If $\tau(n)\in\{6,9\}$, then $n \in \{p^5, p^2 q, p q^2, p^8, p^2 q^2\}$: excluded by 
	\Cref{p^k.no.tiene.prop.Delta} and \Cref{pq_pq^2_p^2q^2.no.tiene.prop.Delta}.
	
	If $\tau(n) = 8$, then $n \in \{p^7,\ p^3 q,\ pqr\}$: excluded by 
	\Cref{p^k.no.tiene.prop.Delta,p^kq.tiene.prop.Delta.iff...}, and the hypothesis.
	
	If $\tau(n)\in\{10,14\}$, then $n \in \{p^9, p^4 q, p^{13}, p^6 q\}$: excluded by \Cref{p^k.no.tiene.prop.Delta,p^kq.tiene.prop.Delta.iff...}.
%	The values $\tau(n)\in\{6,7\}$ are excluded by  and \Cref{pq_pq^2_p^2q^2.no.tiene.prop.Delta}: every integer with $\tau(n)\in\{6,7\}$ is of the form $p^5$, $p^2q$, $pq^2$, or $p^6$ (with $p,q$ distinct primes), none of which lies in $\mathbb{N}(\Delta)$. We exclude the remaining 
%	forbidden values by listing the factorization shapes of $n$:
\end{proof}

%%%%%%%%%%%%%%%%%%%%%%%%%%%%%%%%%%%%%%%%%%%%%%%%%%%%%%%%%%%%%%

\subsection{Returning to split graphs}

Finally, we close this section with an important theorem about the $n$-simple induced cycles of $\Phi$, when $n \notin \mathbb{N}(\Delta)$ and is not a square.

%\begin{theorem}[\cite{vnsigma.factor.graph.paths}]
%	\label{ciclos.inducidos.en.Phi}
%	If $S$ is a split graph and $C$ is an induced cycle in $\Phi(S)$, then $|C|\leq 4$.
%\end{theorem}

\begin{theorem}
	\label{teo:return_to_Phi(S)}
	Let $S$ be a split graph, and let $C$ be an $n$-simple induced cycle in $\Phi(S)$. If $n$ is not a square and $n \notin \mathbb{N}(\Delta)$, then $|C| = 4$. 
\end{theorem}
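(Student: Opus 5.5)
The plan is to combine the known structural fact from \cite{vnsigma.factor.graph.paths}, that every induced cycle of $\Phi(S)$ has length at most $4$, with the obstruction for triangles obtained in \Cref{sec:Graphs<->NumberTheory}. An induced cycle has length at least $3$, so $|C|\in\{3,4\}$. It therefore suffices to rule out $|C|=3$.

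Suppose, for contradiction, that $C$ is an $n$-simple triangle in $\Phi(S)$ with vertices $a,b,c$, so that $\sigma_{ab}=\sigma_{bc}=\sigma_{ac}=n$. Since $n$ is not a square, no edge of $C$ has a positive square multiplicity. Then \Cref{S.squarefree.implica.triang_tipo0} gives $\vec{C}=\Delta_0$, so $C$ is a triangle of type $0$.

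We cannot quote \Cref{triang.n-simple&tipo0.implica.Delta.prop}(2) directly, because it assumes that $\Phi(S)$ itself is the triangle, whereas here $C$ is only a subgraph. However, its proof uses only the three edges of the triangle, so we repeat it locally.

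\begin{itemize}
\item By \Cref{triang.n-simple&tipo0.implica.Delta.prop}(1), each of $|d_a-d_b|$, $|d_b-d_c|$, $|d_a-d_c|$ lies in $D^*_n$.
\item Label the vertices so that $d_a<d_b<d_c$. All three inequalities are strict, because an equality $d_u=d_v$ on an edge would make $\sigma_{uv}=(d_u-\eta_{uv})^2$ a square.
\item The identity $d_c-d_a=(d_b-d_a)+(d_c-d_b)$ then writes an element of $D^*_n$ as the sum of two nonzero elements of $D^*_n$.
\end{itemize}

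Hence $D^*_n\cap D^+_n\neq\varnothing$, i.e.\ $n\in\mathbb{N}(\Delta)$, contradicting the hypothesis. This excludes $|C|=3$, and so $|C|=4$.

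No step is genuinely hard. The one point needing care is the observation that the argument of \Cref{triang.n-simple&tipo0.implica.Delta.prop}(2) and \Cref{S.squarefree.implica.triang_tipo0} applies to a triangle sitting inside a larger $\Phi(S)$, not only when $\Phi(S)$ equals the triangle. This holds because only the multiplicities of the three edges and the degrees in $S$ of their endpoints are used.
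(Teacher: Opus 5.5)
Your proof is correct and follows the same route as the paper. You bound $|C|\in\{3,4\}$ by \cite{vnsigma.factor.graph.paths}, force a triangle to be of type~0 via \Cref{S.squarefree.implica.triang_tipo0}, and then apply the argument of \Cref{triang.n-simple&tipo0.implica.Delta.prop} to get $n\in\mathbb{N}(\Delta)$. You also note that part (2) of that theorem is stated only when $\Phi(S)$ is itself the triangle, so it has to be rerun locally on $C$; this works because only the three edge multiplicities and endpoint degrees are used, and it makes explicit a step the paper applies implicitly.
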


\begin{proof}
	From \cite{vnsigma.factor.graph.paths}, we know that $|C| \in \{3, 4\}$. If $|C| = 3$, then $C$ would be of type 0 (see  \Cref{triangulos.permitidos}), by  \Cref{S.squarefree.implica.triang_tipo0}. Since $C$ is $n$-simple, it follows by  \Cref{triang.n-simple&tipo0.implica.Delta.prop} that $n \in \mathbb{N}(\Delta)$, which contradicts the hypothesis.
\end{proof}

It is natural to ask for an arithmetic invariant governing the induced $4$-cycles of $\Phi(S)$, playing the role that the $\Delta$ property plays for the type 0 triangles. The orientation type of a $4$-cycle analogous to $\Delta_0$ is the directed cycle on $a, b, c, d$ with arcs $ab$, $bc$, $cd$ and $ad$: a single arc $ad$ short-circuits the directed path $abcd$, so that the degree increase (in $S$) from $a$ to $d$ equals the sum of the three increases along the path. Determining the condition on $n$ that characterizes the realizability of such an $n$-simple $4$-cycle (the $C_4$ analogue of the $\Delta$ property) is, as far as we know, uncharted territory, and we leave it as a direction for future work.

%%%%%%%%%%%%%%%%%%%%%%%%%%%%%%%%%%%%%%%%%%%%%%%%%%%%%%%%

\section*{Acknowledgements}
This work was partially supported by Universidad Nacional de San Luis, grants PROICO 03-0723 and PROIPRO 03-2923, MATH AmSud, grant 22-MATH-02, Consejo Nacional de Investigaciones
Cient\'ificas y T\'ecnicas grant, PIP 11220220100068CO and Agencia I+D+I grants PICT 2020-00549 and PICT 2020-04064.

\bibliographystyle{abbrv}
\bibliography{citas__the_delta_property}	

@article{cheng2016split,
	title={Split graphs and Nordhaus--Gaddum graphs},
	author={Cheng, Christine and Collins, Karen L and Trenk, Ann N},
	journal={Discrete Mathematics},
	volume={339},
	number={9},
	pages={2345--2356},
	year={2016},
	publisher={Elsevier}
}

@article{jaume2025nullspace,
	title={On the nullspace of split graphs},
	author={Jaume, Daniel A and Schv{\"o}llner, Victor N and Panelo, Cristian and Pereyra, Kevin},
	journal={arXiv preprint arXiv:2512.00190},
	year={2025}
}

@article{vnsigma.2switch.degree,
	title={The 2-switch-degree of a graph},
	author={Schv{\"o}llner, Victor Nicolas and Pastine, Adri{\'a}n},
	journal={arXiv preprint arXiv:2511.23327},
	year={2025}
}

@article{vnsigma.factor.graph,
	title={Simple factor graphs associated with split graphs},
	author={Schv{\"o}llner, Victor Nicolas and Pastine, Adri{\'a}n},
	journal={arXiv preprint arXiv:2512.24252},
	year={2025}
}

@article{vnsigma.factor.graph.paths,
	title={Induced paths and cycles in factor graphs of split graphs},
	author={Schv{\"o}llner, Victor Nicolas and Pastine, Adri{\'a}n},
	journal={arXiv preprint arXiv:2603.14061},
	year={2026}
}

@article{siegel1929,
	title={\"Uber einige Anwendungen diophantischer Approximationen},
	author={Siegel, Carl Ludwig},
	journal={Abhandlungen der Preussischen Akademie der Wissenschaften. Physikalisch-mathematische Klasse},
	pages={41--69},
	year={1929}
}

@article{tyshkevich2000decomposition,
  title={Decomposition of graphical sequences and unigraphs},
  author={Tyshkevich, Regina},
  journal={Discrete Mathematics},
  volume={220},
  number={1-3},
  pages={201--238},
  year={2000},
  publisher={Elsevier}
}

@article{barrus.west.A4,
  title        = {The A\_4 structure of a graph},
  author       = {Barrus, Michael D. and West, Douglas B.},
  journal      = {Journal of Graph Theory},
  volume       = {69},
  number       = {2},
  pages        = {97--113},
  year         = {2012},
  publisher    = {Wiley},
  doi          = {10.1002/jgt.20639}
}

@article{vnsigma.2switch.unic.pseudof,
	title={2-switch: transition and stability on forests and pseudoforests},
	author={Schv{\"o}llner, Victor Nicolas and Pastine, Adri{\'a}n and Jaume, Daniel A},
	journal={arXiv preprint arXiv:2603.07439},
	year={2026}
}

@book{chartrand2010graphs,
  title={Graphs \& Digraphs},
  author={Chartrand, G. and Lesniak, L. and Zhang, P.},
  isbn={9781439895184},
  series={Textbooks in Mathematics},
  url={https://books.google.com.ar/books?id=rhrSBQAAQBAJ},
  year={2010},
  publisher={CRC Press}
}

@article{arikati1999realization,
  title={The realization graph of a degree sequence with majorization gap 1 is Hamiltonian},
  author={Arikati, Srinivasa R and Peled, Uri N},
  journal={Linear algebra and its applications},
  volume={290},
  number={1-3},
  pages={213--235},
  year={1999},
  publisher={Elsevier}
}
	
\end{document}